\theoremstyle{plain}
\newtheorem{theorem}{Theorem}[section]
\newtheorem{proposition}[theorem]{Proposition}
\newtheorem{corollary}[theorem]{Corollary}
\newtheorem{lemma}[theorem]{Lemma}
\newtheorem{example}[theorem]{Example}
\newtheorem{definition}[theorem]{Definition}
\newcommand{\bfo}{{\bf o}}
\newcommand{\bfC}{{\mathbb C}}
\newcommand{\bfP}{{\mathbb P}}
\newcommand{\bfR}{{\mathbb R}}
\newcommand{\barj}{{\overline j}}
\newcommand{\barl}{{\overline \ell}}
\newcommand{\barz}{{\overline z}}
\newcommand{\barpartial}{{\overline \partial}}
\newcommand{\mapright}[1]{\smash{\mathop{   \hbox to 0.7cm{\rightarrowfill}}
  \limits^{#1}}}
\newcommand{\Ker}{{\rm Ker}}
\newcommand{\Vol}{{\rm Vol}}
\begin{document}
\title
{Einstein metrics and GIT stability}
\author{Akito Futaki}
\address{Department of Mathematics, Tokyo Institute of Technology, 2-12-1
O-okayama, Meguro, Tokyo 152-8551, Japan}
\email{futaki@math.titech.ac.jp}
\author{Hajime Ono}
\address{Department of Mathematics,
Faculty of Science and Technology,
Tokyo University of Science,
2641 Yamazaki, Noda,
Chiba 278-8510, Japan}
\email{ono\_hajime@ma.noda.tus.ac.jp}


\begin{abstract} In this expository article we review the problem of finding Einstein metrics 
on compact K\"ahler manifolds and Sasaki manifolds. In the former half of this article we see that, 
in the K\"ahler case,  the problem fits better with the notion of stability in Geometric Invariant Theory
if we extend the problem to that of finding extremal K\"ahler metrics or constant
scalar curvature K\"ahler (cscK) metrics. In the latter half of this paper we see that most of ideas in K\"ahler geometry extend
to Sasaki geometry as transverse K\"ahler geometry. We also summarize recent results about the existence of toric  Sasaki-Einstein
metrics. 
\end{abstract}
\keywords{Einstein metric, K\"ahler manifold, Sasaki manifold  }

\subjclass{Primary 53C55, Secondary 53C21, 55N91 }

\maketitle

\section{Introduction}

As the Riemannian metrics of constant curvature on compact Riemann surfaces are used in Teichm\"uller theory
it is an important problem to find a metric which is canonical in a certain sense on a given K\"ahler manifold. A typical such result would be the proof of the Calabi conjecture published in 1977 by Yau (\cite{yau77}). This result says that given a compact K\"ahler manifold $M$ with $c_1(M) = 0$ there exists a unique K\"ahler metric with the Ricci curvature identically zero, called a Ricci-flat K\"ahler metric, in each K\"ahler class, and now a compact K\"ahler manifold with $c_1(M) = 0$ is called a Calabi-Yau manifold. In the case when $c_1(M) < 0$, namely in the case when the first Chern class is represented by a real closed $(1,1)$-form whose coefficients form a negative definite Hermitian matrix, the existence of a K\"ahler metric with the Ricci curvature equal to $-1$ times the K\"ahler metric, called a K\"ahler-Einstein metric with negative sign, was proved by Yau (\cite{yau77}) and also by Aubin (\cite{aubin76}) around the same time as the Calabi conjecture. On the other hand in the case when $c_1(M) > 0$ the problem of finding a K\"ahler-Einstein metric of positive sign is known to have various obstructions, and complete understanding has not been obtained. A compact K\"ahler manifold with $c_1(M) > 0$ is called a Fano manifold. By a theorem of Matsushima (\cite{matsushima57}) the complex  Lie algebra $\mathfrak h(M)$ of all holomorphic vector fields on a compact K\"ahler-Einstein manifold is reductive, and by a result  of the first author (\cite{futaki83.1}) for any given Fano manifold $M$ there exists a Lie algebra character $f : \mathfrak h(M) \to \bfC$ with the property that $f = 0$ if $M$ admits a K\"ahler-Einstein metric. These two results therefore give obstructions to the existence of K\"ahler-Einstein metrics. On the other hand Yau conjectured that the existence of K\"ahler-Einstein metrics in the case of $c_1(M) > 0$ will be equivalent to certain sense of stability in Geometric Invariant Theory (GIT for short) (\cite{yau93}). This conjecture comes from the well-known results about finding Hermitian-Einstein metrics on holomorphic vector bundles over compact K\"ahler manifolds. If one can find an Hermitian-Einstein metric on a holomorphic vector bundle over a compact K\"ahler surface, it gives a special case of anti-self dual connection. In the gauge theoretic study of four manifolds in 1980's Donaldson proved that the existence of an Hermitian-Einstein metric on a holomorphic vector bundle over a compact K\"ahler surface is equivalent to the stability of the vector bundle in the sense of Mumford and Takemoto, a kind of GIT stability. In the case of holomorphic vector bundles over compact K\"ahler manifolds of higher dimensions a similar result was proved by Uhlenbeck and Yau, see  \cite{donkro} for the detail of these results. Yau's conjecture for K\"ahler-Einstein metrics in the positive case suggests that the GIT stability should play the key role similarly to the vector bundle case.

The necessity of GIT stability was shown first by Tian \cite{tian97}. He introduced the notion of K-stability and proved that if a Fano manifold $M$ admits a K\"ahler-Einstein metric then $M$ is K-stable. To define K-stability one first considers degenerations of Fano manifolds as algebraic varieties then defines K-stability using $f$ as a numerical invariant to measure the stability. Tian also proved that the ``properness'' of Mabuchi K-energy is equivalent to the existence of K\"ahler-Einstein metric. Mabuchi K-energy amounts to the log of Quillen metric of the determinant line bundle of certain elliptic operator over the space of K\"ahler metrics (see Tian \cite{tian94} ). The space of K\"ahler metrics can be regarded as an orbit of the action of symplectic diffeomorphisms, and the properness of Mabuchi K-energy and the stability are therefore considered to be equivalent. Thus the properness of Mabuchi K-energy implies GIT stability. Such an explanation has been given in the case of Hermitian-Einstein vector bundles (\cite{donkro}). 
Fujiki \cite{fujiki92} and Donaldson \cite{donaldson97}  used the moment map picture of GIT stability to show the relationship between the existence of K\"ahler metric of constant scalar curvature and GIT stability.

The facts mentioned above can be found in earlier Sugaku articles by the first author \cite{futaki92} in 1992 and Bando  \cite{bando98} in 1998. The book written by Nakajima  \cite{nakajima98} in 1999 also include the detail of the above facts. The present article therefore is restricted only to the later development. However, after the two papers \cite{donaldson01} in 2001 and  \cite{donaldson02} in 2002 by Donaldson, papers in this field increased rapidly and it is not possible to cover all of them. We restrict ourselves therefore mainly to our own results and to own interest, and omit many important results by other authors.

Below is the summary of later sections. In section 2 we see the moment map picture of GIT stability and show that the scalar curvature becomes the moment map following the arguments of Fujiki and Donaldson. This shows that K\"ahler metrics of constant scalar curvature or extremal K\"ahler metrics are more directly related to GIT stability rather than K\"ahler-Einstein metrics. Though Matsushima's theorem and the character $f$ were obtained first as obstructions to the existence of K\"ahler-Einstein metrics they are extended to obstructions to the existence of K\"ahler metrics of constant scalar curvature metrics. Matsushima's theorem is further extended as a structure theorem for the Lie algebra of all holomorphic vector fields on compact K\"ahler manifolds with extremal K\"ahler metrics (\cite{calabi85}). We see that these results can be obtained by applying the proofs for the corresponding facts in the framework of moment map picture of the finite dimensional model. The Lie algebra character $f$ can be extended to obstructions for higher Chern forms to be harmonic (\cite{bando83}). Using this fact we consider a perturbation of extremal K\"ahler metrics by perturbing the scalar curvature incorporating the higher Chern forms. In this case again the finite dimensional model suggests the right proofs of the results which are expected to be true (\cite{futaki06}, \cite{futaki07.1}).

In section 3 we discuss the relationship between the existence of K\"ahler metrics of constant scalar curvature and asymptotic Chow semistability.  We see first of all that the character $f$ appears as an obstruction to asymptotic Chow semistability. We then discuss on the results obtained by Donaldson (\cite{donaldson01}) and others.

In section 4 we state the conjecture about the equivalence of K-stability and the existence of K\"ahler metrics of constant scalar curvature. Donaldson re-defined the character $f$ in the manner of algebraic geometry, and used it to re-define the notion of K-stability. Under the new definitions it is shown that the existence of K\"ahler metrics of constant scalar curvature implies K-semistability (\cite{chentian04},                   \cite{donaldson05}). In Donaldson's definition of K-stability one uses the pair of a K\"ahler manifold and an ample line bundle $L$, but for general K\"ahler manifold with general K\"ahler class one can define K-stability using the geodesics in the space of K\"ahler metrics and the behavior of Mabuchi K-energy along the geodesics.  It is conjectured that K-stability is a necessary and sufficient condition for the existence of K\"ahler metrics of constant scalar curvature.

In section 5 we will discuss on the existence problem of Einstein metrics on Sasaki manifolds, called Sasaki-Einstein metrics. Sasaki-Einstein metrics are studied in recent years both in mathematics and physics since they play an important role in the study of AdS/CFT correspondence in superstring theory. Sasaki manifolds are contact Riemannian manifolds whose cone is a K\"ahler manifold, and thus they are odd dimensional. The Reeb vector field defined by the contact structure admits a transverse K\"ahler structure. If a Sasaki manifold admits an Einstein metric then the Ricci curvature is necessarily positive, and thus if the manifold is complete then it is compact. Further the transverse K\"ahler structure also admits positive K\"ahler-Einstein metric. If $M$ is a Fano manifold and $S$ is the total space of the $U(1)$-bundle associated with the canonical line bundle then $S$ is a Sasaki manifold. If $M$ admits no nontrivial holomorphic vector field then finding a Sasaki-Einstein metric on $S$ is equivalent to finding a K\"ahler-Einstein metric. Thus it is apparent that the stability plays a role in this case. However if $M$ admits a nontrivial torus action then the Sasaki structure on $S$ can be deformed by the deformations of Reeb vector field inside the Lie algebra of the torus, and hence there is more possibility for $S$ to admit a Sasaki-Einstein metric. In fact the authors were able to prove that a $(2m+1)$-dimensional Sasaki manifold admitting an effective action of $(m+1)$-dimensional torus admits a Sasaki-Einstein metric if $S$ is described by a ``toric diagram of height $\ell$''
(\cite{FOW}, \cite{CFO}). In particular the total space of $U(1)$-bundle associated with the canonical line bundle $K_M$ of a toric Fano manifold $M$ admits a Sasaki-Einstein metric. Applying this we can show that for any positive integer $k$, the $k$-fold connected sum $k(S^2 \times S^3)$ of $S^2 \times S^3$ carries countably many deformation inequivalent toric Sasaki-Einstein metrics (\cite{CFO}). As another application we can prove the existence of a complete Ricci-flat K\"ahler metric on the total space of the canonical line bundle $K_M$ of a toric Fano manifold $M$ (\cite{futaki07.2}).

\section{Symplectic geometry and scalar curvature}

Let $(Z, \Omega)$ be a K\"ahler manifold and suppose that a compact Lie group $K$ acts on $Z$ as holomorphic isometries. Then the complexification $K^c$ of $K$ acts on $Z$ as biholomorphisms. The actions of $K$ and $K^c$ induce homomorphisms of the Lie algebras ${\mathfrak k}$ and ${\mathfrak k}^c$ of $K$ and $K^c$ to the real Lie algebra $\Gamma(TZ)$ of all smooth vector fields on $Z$. We shall denote these homomorphisms by the same letter $\rho$. Then for $\xi,\ \eta \in {\mathfrak k}$, $\xi + i\eta \in {\mathfrak k}^c$ we have 
$$ \rho(\xi + i\eta) = \rho(\xi) + J\rho(\eta),$$
where $J$ denotes the complex structure of $Z$. Let $[\Omega]$ be a de Rham class which represents an integral cohomology class, and let $ L \to Z$ be the holomorphic line bundle with $c_1(L) = [\Omega]$. There exists an Hermitian metric $h$ on $L^{-1}$ such that its Hermitian connection $\theta$ satisfies
$$ - \frac 1{2\pi} d\theta = \pi^{\ast}\Omega$$
where $\pi : L^{-1} \to Z$ denotes the projection. If we lift the action of $K^c$ to $L^{-1}$ then a moment map $\mu : Z \to {\mathfrak k}^{\ast}$ is determined (see \cite{donkro}, section 6.5). Suppose that $p \in L^{-1} - \mathrm{zero\ section}$ and $\ x \in Z$ satisfy $\pi (p) = x$. Let $\Gamma = K^c\cdot x$ be the $K^c$-orbit through $x \in Z$, and $\widetilde{\Gamma} = K^c\cdot p$ be the $K^c$-orbit through $p \in L^{-1}$. We say that $x \in Z$ is {\bf polystable} with respect to the $K^c$-action if $\widetilde{\Gamma}$ is a closed subset in the total space of $L^{-1}$. We define a function $\ell : \widetilde{\Gamma} \to {\mathbb R}$ on $\widetilde{\Gamma}$ by
$$ \ell(\gamma) = \log |\gamma|^2$$
where the norm $|\gamma|$ is taken with respect to $h$. The following is well-known, see \cite{donkro}, section 6.5 again. 
\begin{itemize}
\item\ \ The function $\ell$ has a critical point if and only if the moment map $\mu : Z \to \mathfrak k^{\ast}$ has a zero on $\Gamma$.
\item\ \ The function $\ell$ is convex.
\end{itemize}
From these two facts the next propositions follow.

\begin{proposition} 
The point $x \in Z$ is polystable with respect to the action of $K^c$ if and only if the moment map $\mu$ has a zero on $\Gamma$.
\end{proposition}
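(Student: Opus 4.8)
The plan is to prove the equivalence as an instance of the Kempf--Ness correspondence, using the two displayed facts about $\ell$ --- that $\ell$ has a critical point on $\widetilde\Gamma$ exactly when $\mu$ has a zero on $\Gamma$, and that $\ell$ is convex --- as the basic inputs. Everything reduces to showing that $\widetilde\Gamma$ is closed in the total space of $L^{-1}$ if and only if $\ell$ attains its minimum on $\widetilde\Gamma$.

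For the ``if'' direction, suppose $\mu$ has a zero on $\Gamma$. By the first fact $\ell$ has a critical point $\gamma_0\in\widetilde\Gamma$, so $\widetilde\Gamma=K^c\cdot\gamma_0$. Using the Cartan decomposition $K^c=K\exp(i{\mathfrak k})$ together with the fact that $K$ preserves $|\cdot|$, any $\gamma\in\widetilde\Gamma$ can be written as $\gamma=k\exp(i\xi)\gamma_0$ with $\ell(\gamma)=\ell(\exp(i\xi)\gamma_0)$. The curve $t\mapsto\exp(it\xi)\gamma_0$ has tangent vector $J\rho(\xi)$ at $t=0$, which is tangent to $\widetilde\Gamma$; since $\ell$ is convex along this curve (second fact) and $\gamma_0$ is a critical point, $t\mapsto\ell(\exp(it\xi)\gamma_0)$ is nondecreasing on $[0,\infty)$, whence $\ell(\gamma)\geq\ell(\gamma_0)$. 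Thus $\gamma_0$ realizes the minimum of $\ell$, equivalently of $|\cdot|$, on $\widetilde\Gamma$. To deduce that $\widetilde\Gamma$ is closed, let $g_n\gamma_0\to y$; writing $g_n=k_n\exp(i\xi_n)$ and passing to a subsequence with $k_n\to k\in K$, it suffices to show $\lim_n\exp(i\xi_n)\gamma_0\in\widetilde\Gamma$. If $(\xi_n)$ is bounded this is clear. If not, one conjugates each $\xi_n$ into a fixed maximal torus of ${\mathfrak k}$ lying in a closed Weyl chamber and uses the linearization of the $K^c$-action supplied by $L$ to reduce to a linear action of a complex torus; there the criticality of $\gamma_0$ says that the weights occurring in $\gamma_0$ have their barycentre at the origin, and this forces $|\exp(i\xi_n)\gamma_0|\to\infty$ along any unbounded direction that is nontrivial on $\gamma_0$, contradicting convergence --- the directions trivial on $\gamma_0$ being removed by passing to a subtorus and inducting on dimension.

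For the ``only if'' direction, assume $\widetilde\Gamma$ is closed. It is disjoint from the zero section because the lifted $K^c$-action is fibrewise linear, and since the base $Z$ is compact a norm-minimizing sequence in $\widetilde\Gamma$ subconverges to some $\gamma_0\in\widetilde\Gamma$, which cannot lie on the zero section since $\widetilde\Gamma$ avoids it. At $\gamma_0$ the function $|\cdot|$, and therefore $\ell$, attains its minimum; a minimum is a critical point, so by the first fact $\mu$ has a zero on $\Gamma$.

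The step I expect to be the genuine obstacle is the implication ``$\ell$ attains its minimum on $\widetilde\Gamma$'' $\Rightarrow$ ``$\widetilde\Gamma$ is closed'': convexity by itself does not prevent an orbit from being non-closed, and the extra input is precisely that a critical point of $\ell$ satisfies the barycentre (balanced) condition, which supplies the properness needed once the problem has been linearized and reduced to a torus. This is the Kempf--Ness theorem in the present line-bundle formulation, worked out in detail in \cite{donkro}, \S 6.5.
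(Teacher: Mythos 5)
Your proof is the standard Kempf--Ness correspondence built on exactly the two facts the paper quotes (critical points of $\ell$ correspond to zeros of $\mu$ on $\Gamma$, and $\ell$ is convex along the curves $t\mapsto\exp(it\xi)\gamma_0$), which is precisely the paper's own route: it simply asserts that the proposition follows from these two facts and refers to \cite{donkro}, \S 6.5 for the details. Your write-up in fact supplies more of those details (the minimum-implies-closed-orbit step via the torus/weight argument, and the implicit compactness of $Z$) than the paper does, but the approach is the same.
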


\begin{proposition}\label{W0}
There is at most one connected component of the zero set $\{x \in \Gamma\ |\ \mu(x) = 0\}$ of the moment map on $ \Gamma$.
Further if $\{x \in \Gamma\ |\ \mu(x) = 0\}$ is not empty the function $\ell$ takes its minimum on 
$\{p \in \widetilde{\Gamma}\ |\ \mu(\pi(p)) = 0\}$ and thus  $\ell$ is bounded from below.
\end{proposition}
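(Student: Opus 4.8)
The plan is to combine the two displayed facts with the Cartan (polar) decomposition of $K^c$, in the spirit of the Kempf--Ness theorem. Two structural observations are used repeatedly: since $K$ acts by isometries of $(L^{-1},h)$, the function $\ell$ is $K$-invariant; and since $\mu$ is $K$-equivariant and $0$ is fixed by the coadjoint action of $K$, the set $\mu^{-1}(0)\cap\Gamma$ is $K$-stable. I will also use the pointwise refinement of the first bulleted fact that is in fact established in its proof, namely that $p\in\widetilde{\Gamma}$ is a critical point of $\ell$ if and only if $\mu(\pi(p))=0$.

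First I would show that a critical point of $\ell$ is automatically a global minimum on $\widetilde{\Gamma}$. Let $p_0\in\widetilde{\Gamma}$ be critical and let $q\in\widetilde{\Gamma}$ be arbitrary; write $q=k\exp(i\eta)\cdot p_0$ with $k\in K$ and $\eta\in\mathfrak k$ (Cartan decomposition), and set $\gamma(s)=\exp(si\eta)\cdot p_0$ for $s\in[0,1]$. Then $\phi(s):=\ell(\gamma(s))$ is convex by the second bulleted fact and $\phi'(0)=0$ because $p_0$ is critical, so $\phi$ is non-decreasing and $\ell(q)=\ell(k^{-1}q)=\phi(1)\ge\phi(0)=\ell(p_0)$. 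Hence $\ell$ attains its infimum at $p_0$. Combined with the first bulleted fact this already gives the second assertion of the proposition: if $\mu^{-1}(0)\cap\Gamma\neq\emptyset$ then $\ell$ has a critical point, so $\ell$ is bounded below and attains its minimum, and by the pointwise correspondence every $p$ with $\mu(\pi(p))=0$ is critical, hence a minimizer.

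For the first assertion I would prove that $\mu^{-1}(0)\cap\Gamma$, when nonempty, is path-connected. Given $x_0,x_1$ in this set, lift them to $p_0,p_1\in\widetilde{\Gamma}$; both are critical points of $\ell$. Pick $g\in K^c$ with $g\cdot p_0=p_1$ and write $g=k\exp(i\eta)$; then $k^{-1}p_1=\exp(i\eta)\cdot p_0$ is again critical, using $K$-invariance of $\ell$ (or $K$-equivariance of $\mu$). Now $\phi(s)=\ell(\exp(si\eta)\cdot p_0)$ is convex on $[0,1]$ with $\phi'(0)=\phi'(1)=0$, hence constant; so $\exp(si\eta)\cdot p_0$ is critical for every $s$, i.e.\ $\mu(\pi(\exp(si\eta)\cdot p_0))=0$ for all $s$. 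This yields a path in $\mu^{-1}(0)\cap\Gamma$ from $x_0$ to $k^{-1}x_1$, and concatenating it with a path inside the orbit $K\cdot x_1$ — which is connected (we may assume $K$ connected) and lies in $\mu^{-1}(0)\cap\Gamma$ — joins $x_0$ to $x_1$. Hence $\mu^{-1}(0)\cap\Gamma$ has at most one connected component.

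The crux, and the only place where the K\"ahler geometry genuinely enters, is the implication \emph{critical point} $\Rightarrow$ \emph{global minimum}: it rests precisely on the convexity of $\ell$ along the geodesics $s\mapsto\exp(si\eta)\cdot p$ of the nonpositively curved symmetric space $K^c/K$, i.e.\ on the second bulleted fact. The remaining ingredients — that every point of $\widetilde{\Gamma}$ is reached from any other along such a geodesic (Cartan decomposition), and the $K$-invariance bookkeeping — are routine.
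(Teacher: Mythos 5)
Your proof is correct and is essentially the argument the paper has in mind: the paper offers no separate proof, asserting that the proposition follows from the two bulleted facts (the critical-point characterization of zeros of $\mu$ and the convexity of $\ell$ along the $\exp(i\mathfrak{k})$-directions), which is exactly the Kempf--Ness-type argument you carry out via the Cartan decomposition. The only step stated without justification is ``$\phi$ constant, hence $\exp(si\eta)\cdot p_0$ is critical for every $s$''; this follows in one line from your first paragraph, since the constant value equals the global minimum of $\ell$ on $\widetilde{\Gamma}$, so every point of the segment is a minimizer and therefore a critical point, whence $\mu(\pi(\exp(si\eta)\cdot p_0))=0$.
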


Fixing $x \in Z$, we denote by $\mu(x) : {\mathfrak k}^c \to \bfC$ the $\bfC$-linear extension of 
$\mu(x) : {\mathfrak k} \to \bfR$. Let $K_x$ and $(K^c)_x$ be the stabilizer subgroups at $x$ of the action of 
$K$ and $K^c$, and let ${\mathfrak k}_x$ and $({\mathfrak k}^c)_x$ be their respective Lie algebras.
Let $f_x : ({\mathfrak k}^c)_x \to {\mathbb C}$ be the restriction of $\mu(x) : {\mathfrak k}^c \to \bfC$ to $({\mathfrak k}^c)_x$.
Notice that $(K^c)_{gx} = g(K^c)_x g^{-1}$. See  \cite{xwang04} or \cite{futaki05}  for the proofs of Proposition \ref{W1} and Proposition \ref{W2}.

\begin{proposition}[\cite{xwang04}]\label{W1}
Fix $x_0 \in Z$.  Then for $x \in K^c\cdot x_0$, $f_x$ is $K^c$-equivariant, that is $f_{gx}(Y) = f_x(Ad(g^{-1})Y)$.
In particular if $f_x$ vanishes for some $x \in K^c\cdot x_0$ then it vanishes for every $x \in K^c\cdot x_0$.
 Furthermore $f_x : ({\mathfrak k}^c)_x \to {\mathbb C}$ becomes a Lie algebra homomorphism.
 \end{proposition}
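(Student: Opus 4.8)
The plan is to exploit the moment map formalism together with the explicit description of $\mu(x)$ in terms of the lifted $K^c$-action on $L^{-1}$, and to use the convexity of $\ell$ along one-parameter subgroups. Let me set up notation: for $\xi \in \mathfrak{k}$ we write $\rho(\xi)$ for the induced vector field on $Z$ and also, slightly abusively, for the induced vector field on $L^{-1}$ lifting it; the defining property of the moment map is that $\langle \mu, \xi \rangle$ is (up to a universal constant) the Hamiltonian function generating $\rho(\xi)$, and in the line-bundle picture $\langle \mu(x), \xi \rangle$ is computed from the action of $\rho(\xi)$ on a point $p$ of the fiber over $x$, namely $\frac{d}{dt}\big|_{t=0} \log|\exp(t\xi)\cdot p|^2_h$ up to normalization. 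I would first record this formula carefully, since everything hinges on it.

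First I would prove $K$-equivariance of $\mu$, i.e. $\mu(gx) = \mathrm{Ad}^*(g)\mu(x)$ for $g \in K$; this is standard and follows because $K$ acts by holomorphic isometries preserving $h$ and $\theta$. Next I would pass to the complexification. The subtlety is that $\mu$ is only $\mathfrak{k}$-valued (or $\mathfrak{k}^*$-valued), not $\mathfrak{k}^c$-valued, so the claimed identity $f_{gx}(Y) = f_x(\mathrm{Ad}(g^{-1})Y)$ for $g \in K^c$ is a statement about the $\bfC$-linear extension restricted to the stabilizer subalgebra. Here is where $Y \in (\mathfrak{k}^c)_x$ matters: because $Y$ fixes $x$, the function $t \mapsto \ell(\exp(tY)\cdot p)$ is defined and, being the restriction of the convex function $\ell$ to a geodesic $\widetilde{\Gamma}$, it is convex; moreover I claim its derivative is constant. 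Indeed, writing $Y = \xi + i\eta$ with $\xi, \eta \in \mathfrak{k}_x$ (one must check $Y \in (\mathfrak{k}^c)_x$ forces $\xi,\eta \in \mathfrak{k}_x$, which holds because $\rho(\xi)_x + J\rho(\eta)_x = 0$ and $\rho(\xi)_x, \rho(\eta)_x$ are real while $J$ has no real eigenvalues — so both vanish), the real part $\xi$ generates isometries fixing $x$ and contributes nothing to the variation of $\log|\gamma|^2$ beyond a purely imaginary rotation in the fiber, while $i\eta$ generates the gradient flow of the Hamiltonian of $\eta$; along this flow the value $\langle\mu, \eta\rangle$ stays constant at a point fixed by the whole flow. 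So $\frac{d}{dt}\ell(\exp(tY)p)$ is independent of $t$ and equals $-\langle \mu(x), iY\rangle$ up to normalization, which reads off $f_x(Y)$.

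With that in hand, equivariance is a change-of-basepoint computation: moving from $p$ to $g\cdot p$ for $g \in K^c$ translates the one-parameter subgroup $\exp(tY)$ to $g\exp(tY)g^{-1} = \exp(t\,\mathrm{Ad}(g)Y)$, and since $\ell(g\exp(tY)p) = \ell(\exp(t\mathrm{Ad}(g)Y)\cdot(gp))$, differentiating at $t=0$ gives $f_{gx}(\mathrm{Ad}(g)Y) = f_x(Y)$, equivalently $f_{gx}(Y) = f_x(\mathrm{Ad}(g^{-1})Y)$. (Note $\mathrm{Ad}(g)$ carries $(\mathfrak{k}^c)_x$ isomorphically onto $(\mathfrak{k}^c)_{gx}$, so the statement typechecks.) The ``in particular'' clause is then immediate: if $f_{x_1} = 0$ for one $x_1 = g x_0$ in the orbit, then $f_{x_0} = f_{x_1}\circ \mathrm{Ad}(g) = 0$, and by equivariance again $f_x = 0$ for every $x = hx_0$.

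Finally, for the homomorphism property $f_x([Y_1,Y_2]) = 0$ on $(\mathfrak{k}^c)_x$: I would argue that for $Y_1, Y_2 \in (\mathfrak{k}^c)_x$ the commutator $[Y_1,Y_2]$ again lies in $(\mathfrak{k}^c)_x$, and then use the cocycle/equivariance identity differentiated once more. Concretely, differentiate $f_{\exp(sY_1)x}(Y_2) = f_x(\mathrm{Ad}(\exp(-sY_1))Y_2)$ in $s$ at $s=0$: the left side is $\frac{d}{ds}\big|_0 f_{\exp(sY_1)x}(Y_2)$, but $\exp(sY_1)x = x$ since $Y_1$ is in the stabilizer, so the left side vanishes; the right side is $-f_x([Y_1,Y_2])$. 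Hence $f_x([Y_1,Y_2]) = 0$, which is exactly the assertion that $f_x$ is a Lie algebra homomorphism to the abelian $\bfC$. The main obstacle I anticipate is the careful bookkeeping in the second paragraph — verifying that the derivative of $\ell$ along $\exp(tY)p$ is genuinely constant and correctly identifying it with $f_x(Y)$, including getting the normalization and the role of the imaginary part $J\rho(\eta)$ right; the equivariance and homomorphism steps are then formal consequences.
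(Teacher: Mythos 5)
The central gap is your parenthetical claim that $Y=\xi+i\eta\in({\mathfrak k}^c)_x$ forces $\xi,\eta\in{\mathfrak k}_x$. This is false, and the argument you give for it is not an argument: the stabilizer condition is $\rho(\xi)_x+J\rho(\eta)_x=0$, where $J$ is an endomorphism of the \emph{real} tangent space, and $v+Jw=0$ has many nonzero solutions (take $v=-Jw$); the absence of real eigenvalues of $J$ is irrelevant. A concrete counterexample is $K=SU(2)$ acting on $Z={\mathbb C}{\mathbb P}^1$: the $SL(2,{\mathbb C})$-stabilizer of a point is a Borel subalgebra, strictly larger than the complexification of the $U(1)$-stabilizer, and a nilpotent element fixes the point while neither its real nor its imaginary $\mathfrak{su}(2)$-part does. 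Indeed, if your claim were true, then $({\mathfrak k}^c)_x=({\mathfrak k}_x)^c$ would hold at \emph{every} $x$, which would make Proposition \ref{W2} vacuous and, in the K\"ahler interpretation of Section 2, would assert that every Hamiltonian holomorphic vector field is a complexified Killing field for an arbitrary K\"ahler metric --- the Lichnerowicz--Matsushima theorem with no hypothesis. Since your proof that $\frac{d}{dt}\ell(\exp(tY)p)$ is constant proceeds by flowing separately with $\exp(t\xi)$ and $\exp(it\eta)$, both assumed to fix $x$, that step collapses.

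The second problem is that the equivariance step is circular as written. The identity $\ell(g\exp(tY)p)=\ell(\exp(t\,\mathrm{Ad}(g)Y)\,gp)$ is a tautology (both sides evaluate $\ell$ on the same curve), so differentiating it at $t=0$ returns the same number on each side and by itself gives no relation between $f_x(Y)$ and $f_{gx}(\mathrm{Ad}(g)Y)$. What is actually needed --- and is the content of the proof in \cite{xwang04}, \cite{futaki05}, to which the paper defers --- is the fiber-weight observation: since $\exp(tY)$ fixes $x$, it acts on the line $L^{-1}_x$ by scalars $e^{t\alpha(Y)}$, and the weight $\alpha(Y)$ equals, up to a universal constant, $f_x(Y)$; note that $\frac{d}{dt}\ell$ only detects $\mathrm{Re}\,\alpha(Y)$, i.e.\ one real component of $f_x(Y)$, so one must also run the computation for $iY\in({\mathfrak k}^c)_x$ to capture the full ${\mathbb C}$-linear functional. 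Because $g\in K^c$ acts on $L^{-1}$ linearly on fibers and conjugates $\exp(tY)$ into $\exp(t\,\mathrm{Ad}(g)Y)$, the scalar by which $\exp(t\,\mathrm{Ad}(g)Y)$ acts on $L^{-1}_{gx}$ is the same $e^{t\alpha(Y)}$, which yields $f_{gx}(\mathrm{Ad}(g)Y)=f_x(Y)$ and simultaneously repairs the constancy claim without the false decomposition of $Y$. Your final step --- differentiating $f_{\exp(sY_1)x}(Y_2)=f_x(\mathrm{Ad}(\exp(-sY_1))Y_2)$ at $s=0$ to get $f_x([Y_1,Y_2])=0$ --- is correct and standard, and the ``in particular'' clause is immediate, but both rest on equivariance, which must first be established by the weight argument above rather than by the tautological identity you differentiate.
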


Suppose we are given a $K$-invariant inner product on ${\mathfrak k}$. Then we have a natural identification
${\mathfrak k} \cong {\mathfrak k}^{\ast}$, and ${\mathfrak k}^{\ast}$ also has a 
$K$-invariant inner product.
Let us consider the function $\phi : K^c\cdot x_0 \to {\mathbb R}$ given by
$\phi(x) = |\mu(x)|^2$. A critical point $x \in K^c\cdot x_0$ of $\phi$ is called an
 {\bf extremal point}.

\begin{proposition}[\cite{xwang04}]\label{W2}
Let $x \in K^c\cdot x_0$ be an extremal point. Then we have a decomposition of the Lie algebra
$$ ({\mathfrak k}^c)_x = ({\mathfrak k}_x)^c + \sum_{\lambda > 0} {\mathfrak k}^c_{\lambda} $$
where ${\mathfrak k}^c_{\lambda}$ is the $\lambda$-eigenspace of ${\mathrm ad}(\sqrt{-1}\mu(x))$, and
$\sqrt{-1}\mu(x)$ belongs to the center of $({\mathfrak k}_x)^c$. In particular we have
$({\mathfrak k}_x)^c = ({\mathfrak k}^c)_x$ if $\mu(x) = 0$.
\end{proposition}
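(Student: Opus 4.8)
The plan is to reduce the statement to (i) a computation of $d\phi$ at $x$, (ii) a linear‑algebra fact about $\mathrm{ad}(\sqrt{-1}\mu(x))$, (iii) one Kähler identity, and then to isolate the one genuinely GIT‑flavoured point. First I would compute $d\phi_x$. Writing a tangent vector to $K^c\cdot x_0$ at $x$ as $v=\rho(\xi)|_x+J\rho(\eta)|_x$ with $\xi,\eta\in\mathfrak k$, one has $d\phi_x(v)=2\langle d\mu_x(v),\mu(x)\rangle$. The part $d\mu_x(\rho(\xi)|_x)$ is the coadjoint action of $\xi$ on $\mu(x)$ (by $K$‑equivariance of $\mu$), hence orthogonal to $\mu(x)$; the part $d\mu_x(J\rho(\eta)|_x)$, paired with $\mu(x)$, equals $g_x(\rho(\mu(x))|_x,\rho(\eta)|_x)$ by $d\mu^\zeta=\iota_{\rho(\zeta)}\Omega$ together with $\Omega(U,V)=g(JU,V)$. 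Thus $d\phi_x(v)=2g_x(\rho(\mu(x))|_x,\rho(\eta)|_x)$, so $x$ is extremal iff $\rho(\mu(x))|_x$ is orthogonal to $T_x(K\cdot x)$; since it also lies in $T_x(K\cdot x)$ this forces $\rho(\mu(x))|_x=0$, i.e. $\mu(x)\in\mathfrak k_x$, equivalently $\sqrt{-1}\mu(x)\in(\mathfrak k^c)_x$ (using the $\mathbb R$‑linearity of $\rho$).

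Set $\beta=\mu(x)$. Extending the $K$‑invariant inner product to a Hermitian inner product on $\mathfrak k^c$, $\mathrm{ad}(\beta)$ is skew‑Hermitian, so $\mathrm{ad}(\sqrt{-1}\beta)$ is Hermitian and $\mathfrak k^c=\bigoplus_\lambda\mathfrak k^c_\lambda$ with real eigenvalues; since $\beta\in\mathfrak k^c_0$, $\sqrt{-1}\beta$ is central in $\mathfrak k^c_0$, and from $\mathrm{Ad}^\ast(K_x)\mu(x)=\mu(x)$ one gets $\mathfrak k_x\subseteq\ker\mathrm{ad}(\beta)$, hence $(\mathfrak k_x)^c\subseteq\mathfrak k^c_0$ and $\sqrt{-1}\beta$ is central in $(\mathfrak k_x)^c$. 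Because $\sqrt{-1}\beta\in(\mathfrak k^c)_x$, the subalgebra $(\mathfrak k^c)_x$ is $\mathrm{ad}(\sqrt{-1}\beta)$‑invariant, so $(\mathfrak k^c)_x=\bigoplus_\lambda\bigl((\mathfrak k^c)_x\cap\mathfrak k^c_\lambda\bigr)$.

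The load‑bearing identity is the following. For $Y=\xi+\sqrt{-1}\eta\in\mathfrak k^c_\lambda$ one has $[\beta,\xi]=\lambda\eta$ and $[\beta,\eta]=-\lambda\xi$, whence invariance of the inner product gives $\mu^{[\xi,\eta]}(x)=\langle\beta,[\xi,\eta]\rangle=\tfrac{\lambda}{2}(|\xi|^2+|\eta|^2)$, while the moment‑map/Poisson relation gives $\mu^{[\xi,\eta]}(x)=\Omega_x(\rho(\xi)|_x,\rho(\eta)|_x)$; expanding $|\rho(Y)|_x|^2=|\rho(\xi)|_x+J\rho(\eta)|_x|^2$ with $g(JU,JV)=g(U,V)$ and $\Omega=g(J\cdot,\cdot)$ then yields
\[
|\rho(Y)|_x|^2=|\rho(\xi)|_x|^2+|\rho(\eta)|_x|^2-\lambda\,(|\xi|^2+|\eta|^2).
\]
Consequently any nonzero $Y\in\mathfrak k^c_\lambda\cap(\mathfrak k^c)_x$ must have $\lambda\ge0$ (the left side being $0$ and each right‑hand term being $\ge0$), so $(\mathfrak k^c)_x\cap\mathfrak k^c_\lambda=0$ for $\lambda<0$; and if $\lambda=0$ then $\rho(\xi)|_x=\rho(\eta)|_x=0$, i.e. $\xi,\eta\in\mathfrak k_x$, so $(\mathfrak k^c)_x\cap\mathfrak k^c_0\subseteq(\mathfrak k_x)^c$, with the reverse inclusion obvious. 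This already gives $(\mathfrak k^c)_x=(\mathfrak k_x)^c$ when $\mu(x)=0$, and in general $(\mathfrak k^c)_x=(\mathfrak k_x)^c\oplus\bigoplus_{\lambda>0}\bigl((\mathfrak k^c)_x\cap\mathfrak k^c_\lambda\bigr)$ with $\sqrt{-1}\mu(x)$ central in $(\mathfrak k_x)^c$.

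The remaining point — that the positive summands are the \emph{full} eigenspaces, $\mathfrak k^c_\lambda\subseteq(\mathfrak k^c)_x$ for every $\lambda>0$ — is the step I expect to be the main obstacle, since (unlike the above, which used only the symplectic/Kähler structure) it genuinely needs the projective / line‑bundle input. Here I would lift the $K^c$‑action to $L^{-1}$: the flow $g_t=\exp(\sqrt{-1}t\beta)$ fixes $x$ and acts on the fibre $L^{-1}_x$ by $v\mapsto e^{ct}v$ with $c=|\beta|^2>0$, as read off from $\tfrac{d}{dt}\ell(g_t\cdot\gamma)|_{t=0}=2\langle\mu(\pi(\gamma)),\beta\rangle$; equivalently a lift $p$ of $x$ is an eigenvector of $\sqrt{-1}\beta$ acting on the ambient space of the Kodaira embedding, with eigenvalue $c$. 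One then invokes the Kirwan–Ness description of critical points of $\|\mu\|^2$ — namely that the orbit $K^c\cdot x_0$ lies in the Bialynicki–Birula stratum flowing to $x$ under $g_t$ — to conclude that the linearization of $\rho(\sqrt{-1}\beta)$ at $x$ is negative semidefinite on $T_x(K^c\cdot x_0)\cong\mathfrak k^c/(\mathfrak k^c)_x$; since the eigenvalues of that linearization are exactly the $\lambda$ for which $\mathfrak k^c_\lambda\not\subseteq(\mathfrak k^c)_x$, no positive such $\lambda$ can occur. (Alternatively one can try to argue directly from convexity of $\ell$ along the $\exp(\sqrt{-1}t\beta)$‑orbits together with the conjugation identity $g_t\exp(sY)g_t^{-1}=\exp(se^{\lambda t}Y)$ and $c>0$, which forces the positive‑eigenvalue directions to be absorbed into the stabilizer.) Combining this with the previous paragraph gives $(\mathfrak k^c)_x=(\mathfrak k_x)^c+\sum_{\lambda>0}\mathfrak k^c_\lambda$, as claimed.
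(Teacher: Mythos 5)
Your first three paragraphs already prove the proposition, and by what is essentially the intended argument: the paper itself gives no proof of Proposition \ref{W2} (it defers to \cite{xwang04} and \cite{futaki05}), and your steps --- the computation of $d\phi_x$ giving $\rho(\mu(x))|_x=0$, hence $\sqrt{-1}\mu(x)\in(\mathfrak k^c)_x$; the Hermitian diagonalization of $\mathrm{ad}(\sqrt{-1}\mu(x))$ and the inclusion $(\mathfrak k_x)^c\subseteq\mathfrak k^c_0$; and the identity $|\rho(\xi)_x+J\rho(\eta)_x|^2=|\rho(\xi)_x|^2+|\rho(\eta)_x|^2-\lambda(|\xi|^2+|\eta|^2)$ for an eigenvector $\xi+\sqrt{-1}\eta$ --- are exactly the ingredients of Wang's proof (up to overall sign conventions, which only relabel $\lambda\mapsto-\lambda$). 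The one thing to realize is that in Proposition \ref{W2}, just as in its infinite-dimensional counterpart Theorem \ref{decomp} where $\mathfrak h_\lambda\subset\mathfrak h(M)$, the eigenspaces $\mathfrak k^c_\lambda$ are to be taken for $\mathrm{ad}(\sqrt{-1}\mu(x))$ acting on the algebra being decomposed, i.e.\ on $(\mathfrak k^c)_x$ (which you correctly show is $\mathrm{ad}(\sqrt{-1}\mu(x))$-invariant). With that reading, your conclusion $(\mathfrak k^c)_x=(\mathfrak k_x)^c\oplus\bigoplus_{\lambda>0}\bigl((\mathfrak k^c)_x\cap\mathfrak k^c_\lambda\bigr)$, together with the centrality of $\sqrt{-1}\mu(x)$ in $(\mathfrak k_x)^c$ and the case $\mu(x)=0$, is the whole statement; nothing more is needed.

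Your fourth paragraph aims at the stronger assertion that the \emph{ambient} eigenspaces satisfy $\mathfrak k^c_\lambda\subseteq(\mathfrak k^c)_x$ for all $\lambda>0$. That assertion is false, so no Kirwan--Ness or Bialynicki--Birula input can establish it. Counterexample: let $K=SU(2)$ act diagonally on $Z=\mathbb{P}^1\times\mathbb{P}^1$ with symplectic form $a\,\omega_{FS}\oplus b\,\omega_{FS}$, $a\neq b$ positive integers, and let $x=(N,S)$ where $N,S$ are the two torus-fixed points of $\mathbb{P}^1$. Then $\mu(x)=(a-b)\nu\neq0$ lies in the torus direction and its fundamental vector field vanishes at $x$, so $x$ is an extremal point; here $(\mathfrak k^c)_x=(\mathfrak k_x)^c=\mathfrak t^c$ is the complexified Cartan, while the positive eigenspace of $\mathrm{ad}(\sqrt{-1}\mu(x))$ in $\mathfrak{sl}(2,\mathbb C)$ is a root space not contained in $\mathfrak t^c$. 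The same example shows that the semidefiniteness you invoke fails: the linearization of $\rho(\sqrt{-1}\mu(x))$ at $x$ has eigenvalues of both signs on $T_x(K^c\cdot x_0)$, even though the orbit does lie in the Kirwan stratum of $\beta=\mu(x)$. So the last paragraph should simply be deleted: it attempts something both unnecessary and untrue, and the proposition, correctly read, is completely proved by what precedes it.
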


We wish to extend the above results to K\"ahler geometry. For this purpose let us recall basic definitions in K\"ahler geometry.
A K\"ahler metric $g = (g_{i{\barj}})$ on a compact K\"ahler manifold $M$ is called an {\bf extremal K\"ahler metric} if  the  $(1,0)$-part 
$$\mathrm{grad}^{1,0}S = \sum_{i,j = 1}^m 
g^{i\barj}\frac{\partial S}{\partial z^{\barj}}
\frac{\partial}{\partial z^i}$$
of the gradient vector field of the scalar curvature $S$ is a holomorphic vector field. An
extremal K\"ahler metric is a critical point of the functional
$$ g \mapsto \int_M |S|^2 dV_g $$
on the space of all K\"ahler metrics in a fixed K\"ahler class.
If the scalar curvature $S$ is constant then its gradient vector field is zero, and in particular a holomorphic vector field and thus the metric is an
extremal K\"ahler metric. A K\"ahler-Einstein metric is a K\"ahler metric whose Ricci curvature
$$ R_{i{\barj}} = - \frac{\partial^2}{\partial z^i \partial {\overline z}^j} \log \det g$$
is proportional to the K\"ahler metric $g$. Then there exists a real constant $k$ such that
\begin{equation}
 R_{i{\barj}} = k g_{i\barj}.
\end{equation}
Such a metric has constant scalar curvature and a K\"ahler-Einstein metric is an
extremal K\"ahler metric. On the other hand the Ricci form 
$$\rho_g = \frac {\sqrt{-1}}{2\pi} \sum_{i,j=1}^{m} R_{i{\barj}} dz^i \wedge d{\bar z}^j $$
represents the first Chern class $c_1(M)$ as a de Rham class. 
In accordance with the sign of $k$,
$c_1(M)$ is represented by a positive, $0$ or negative
$(1,1)$-form. We express these three cases by writing
$c_1(M) > 0$, $c_1(M) = 0$ or $c_1(M) < 0$.
Apparently it is necessary for $M$ to admits a K\"ahler-Einstein metric that one of the three conditions is satisfied.
One may ask the converse. The cases when $c_1(M) < 0$ and $c_1(M) = 0$ has been settled while the case when $c_1(M) > 0$
has not been completely settled as was explained in section 1. 

In the usual arguments in K\"ahler geometry the complex structure is fixed and some K\"ahler class $[\omega_0]$ of a K\"ahler form is fixed, and then
consider the variational problem of finding extremal K\"ahler metrics by varying the K\"ahler form $\omega$ in the de Rham class $[\omega_0]$.
On the other hand we will consider later the moment map given by the scalar curvature where 
$\omega$ is fixed and $\omega$-compatible complex structure $J$ is varied.
As a matter of fact a variational problem in this setting leads to extremal K\"ahler metrics as critical points.
Later we will study perturbed scalar curvature and see that the perturbed extremal K\"ahler metrics are obtained as a critical point of the variations of $\omega$-compatible
complex structures but not obtained as a critical point of the variations of K\"ahler forms compatible with fixed complex structure $J$ (\cite{futaki06}, \cite{futaki07.1}).

Let  ${\mathfrak h}(M)$ denote the complex Lie algebra of all holomorphic vector fields on 
$M$ and set
$${\mathfrak h}_0(M) = \{ X \in {\mathfrak h}(M)\, |\,X\ \text{has a zero} \}.$$
It is a well-known result (\cite{Lic} or \cite{lebrunsimanca93}) that for $X \in {\mathfrak h}_0(M)$ there exists a unique complex-valued smooth function 
$u_X$ such that
\begin{equation}\label{eq2}
 i(X) \omega = - \barpartial u_X.
  \end{equation}

In this sense ${\mathfrak h}_0(M)$ coincides with the set of all
``Hamiltonian'' holomorphic vector fields. 
(The terminology ``Hamiltonian'' may be misleading because $X$ does not preserve the symplectic form unless $u_X$ is a real valued function. )
We always assume that 
Hamiltonian function $u_X$ is normalized as
\begin{equation}\label{eq3}
 \int_M u_X\,\omega^m = 0.
\end{equation}

Let $(M, \omega_0, J_0)$ be a compact K\"ahler manifold where $\omega_0$ denotes a K\"ahler 
form and $J_0$ a complex structure. We assume $\dim_{\bfR}M = 2m$. In what follows $\omega_0$ shall be
a fixed symplectic form and the complex structures shall be varied. Let 
$Z$ be the set of all complex structures $J$ which are compatible with $\omega_0$. Here, we say that $J$ is compatible with $\omega_0$ if
$$ \omega_0(JX, JY) = \omega_0(X,Y), \quad \omega_0(X,JX) > 0$$
are satisfied for all $X,\ Y \in T_pM$. Therefore, for each $J \in Z$, 
the triple $(M, \omega_0, J)$ is a K\"ahler manifold. In this situation the tangent space of $Z$ at $J$ is a subspace of the space  $\mathrm{Sym}^2(T^{\ast 0,1}M)$ 
of symmetric tensors of type $(0,2)$, and the natural $L^2$-inner product on $\mathrm{Sym}^2(T^{\ast 0,1}M)$ gives 
$Z$ a K\"ahler structure. 

The set of all smooth functions $u$ on $M$ with
$$ \int_M u\, \omega_0^m/m! = 0 $$
is a Lie algebra with respect to the Poisson bracket in terms of $\omega_0$. 
Denote this Lie algebra by ${\mathfrak k}$ and let  $K$  be its Lie group. Namely $K$ is a subgroup of the group
of symplectomorphisms generated by Hamiltonian diffeomorphisms.  $K$ acts on the K\"ahler manifold $Z$ as
holomorphic isometries.

\begin{theorem}[\cite{fujiki92}, \cite{donaldson97}]\label{FujDon} Let $S_J$ be the scalar curvature of the K\"ahler manifold $(M, \omega_0, J)$
and let $\mu : Z \to {\mathfrak k}^{\ast}$ be the map given by
$$ <\mu(J), u> = \int_M S_J\, u\, \omega_0^m $$
where $ u \in \mathfrak k$. Then $\mu$ is a moment map for the action of  $K$ on $Z$.
\end{theorem}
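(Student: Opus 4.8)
The plan is to verify the two defining properties of a moment map directly: $K$-equivariance of $\mu$, and the infinitesimal identity $d\langle\mu(\cdot),u\rangle = i(X^Z_u)\,\omega_Z$ for every $u\in{\mathfrak k}$, where $X^Z_u$ denotes the vector field on $Z$ generated by $u$ and $\omega_Z$ is the $L^2$ K\"ahler form of $Z$. The first task is to make the geometry of $Z$ explicit: a tangent vector $A\in T_JZ$ is an endomorphism of $TM$ that anticommutes with $J$ and is symmetric for $g_J(\cdot,\cdot):=\omega_0(\cdot,J\cdot)$, equivalently the symmetric $2$-tensor $\dot g:=\omega_0(\cdot,A\cdot)$, which is then of type $(2,0)+(0,2)$ with respect to $J$ and in particular $g_J$-trace-free; the K\"ahler form is $\omega_Z(A,B)=c\int_M\mathrm{tr}(JAB)\,\omega_0^m/m!$ for a fixed constant $c\neq 0$. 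The function $u$ generates the Hamiltonian vector field $X_u$ with $i(X_u)\omega_0=du$, and since $X_u$ is symplectic the induced vector field on $Z$ is $X^Z_u|_J=\mathcal{L}_{X_u}J\in T_JZ$, whose associated $2$-tensor is $\mathcal{L}_{X_u}g_J=\omega_0(\cdot,(\mathcal{L}_{X_u}J)\cdot)$.

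\textbf{The main computation.} Differentiating $\langle\mu(J_t),u\rangle=\int_M S_{J_t}\,u\,\omega_0^m$ along a path with $\dot J_0=A$, the moment map identity reduces to
\[
\int_M (D_JS)(A)\,u\,\omega_0^m \;=\; \omega_Z\big(\mathcal{L}_{X_u}J,\,A\big),
\]
where $D_JS$ is the linearization of the scalar curvature under variations of the complex structure with $\omega_0$ held fixed. Here the standard Riemannian formula $D S(\dot g)=-\Delta_g(\mathrm{tr}_g\dot g)+\mathrm{div}_g\mathrm{div}_g\dot g-\langle\mathrm{Ric}_g,\dot g\rangle_g$ collapses: the first term vanishes since $\dot g$ is $g_J$-trace-free, and the third since the Ricci tensor of a K\"ahler metric is of type $(1,1)$ while $\dot g$ is of type $(2,0)+(0,2)$, hence pointwise orthogonal. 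Thus $D_JS(A)=\mathrm{div}_g\mathrm{div}_g\dot g$, and integrating by parts twice against $u$ produces $\int_M\langle\dot g,\nabla du\rangle_g\,\omega_0^m$, in which only the $(2,0)+(0,2)$-component of the Hessian $\nabla du$ pairs nontrivially with $\dot g$. From $i(X_u)\omega_0=du$ one has $X_u^\flat=du\circ J$, so this component of $\nabla du$ is, up to a universal constant, the complex structure of $Z$ applied to the tensor $\mathcal{L}_{X_u}g_J$ attached to $X^Z_u$; this is the tensorial manifestation of the Hamiltonian structure underlying \eqref{eq2} and of the Lichnerowicz operator $\mathcal{D}u=\bar\partial\,\mathrm{grad}^{1,0}u$. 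Substituting turns the left-hand side into a multiple of $\omega_Z(\mathcal{L}_{X_u}J,A)$, and matching the constant (together with the sign convention for the $K$-action) finishes the identity.

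\textbf{Equivariance.} For $\varphi\in K$ one has $\varphi^{\ast}\omega_0=\omega_0$, and the scalar curvature is natural under diffeomorphisms, so $S_{\varphi_{\ast}J}=S_J\circ\varphi^{-1}$. Since the adjoint action of $K$ on ${\mathfrak k}$ is by pull-back of functions, a change of variables gives $\langle\mu(\varphi_{\ast}J),u\rangle=\int_M(S_J\circ\varphi^{-1})\,u\,\omega_0^m=\int_M S_J\,(u\circ\varphi)\,\omega_0^m=\langle\mu(J),\mathrm{Ad}(\varphi^{-1})u\rangle$, that is $\mu(\varphi_{\ast}J)=\mathrm{Ad}^{\ast}(\varphi)\,\mu(J)$. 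Together with the main computation, this shows that $\mu$ is a moment map.

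\textbf{Where the difficulty lies.} The heart of the matter is the recognition step inside the main computation: after the double integration by parts one must identify the Hessian pairing with the symplectic pairing against $\mathcal{L}_{X_u}J$. Conceptually this says that, on $T_JZ$, the linearized scalar curvature operator is --- up to musical isomorphisms, the complex structure of $Z$, and a sign --- the formal adjoint of $u\mapsto\mathcal{L}_{X_u}J$; unwinding it amounts to a K\"ahler/Weitzenb\"ock-identity computation, essentially the fact that the relevant composition reduces to $\mathcal{D}^{\ast}\mathcal{D}$ acting on functions. One must also be scrupulous with normalizations --- $\omega_0^m$ versus $\omega_0^m/m!$, the $\sqrt{-1}$ and $2\pi$ in the Ricci form, the constant $c$ --- and remember that, although the moment-map statement is over $\mathbb{R}$ with $u$ real, it is the complexified correspondence of the type \eqref{eq2} that is invoked when $\mathcal{L}_{X_u}J$ is expressed through $\bar\partial$ of a gradient vector field. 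A fully rigorous treatment also fixes Sobolev completions of $Z$ and ${\mathfrak k}$, though for the formal moment-map picture this is inessential.
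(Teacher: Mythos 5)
The paper does not actually prove Theorem \ref{FujDon}: it is quoted from Fujiki and Donaldson, and the only trace of an argument in the text is the perturbed statement, Theorem \ref{symp2}, whose identity $\delta \int_M u\,S(J,t)\,\omega^m = \Omega_{J,t}(2\sqrt{-1}\nabla''\nabla''u,\mu)$ at $t=0$ is exactly the identity you derive. So the comparison is with the cited computation rather than with an in-paper proof, and your proposal is the standard one and is sound: linearize $S$ as a Riemannian quantity using $DS(\dot g)=-\Delta(\mathrm{tr}_g\dot g)+\mathrm{div}\,\mathrm{div}\,\dot g-\langle \mathrm{Ric},\dot g\rangle$, kill the first and third terms because $\dot g$ is anti-$J$-invariant (hence trace-free and pointwise orthogonal to the $(1,1)$ Ricci tensor), integrate by parts twice, and identify the anti-invariant part of $\mathrm{Hess}\,u$ with the tensor attached to $\mathcal{L}_{X_u}J$; equivariance follows from naturality of the scalar curvature and $\varphi^*\omega_0=\omega_0$. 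Two remarks. First, the ``recognition step'' you flag as the heart of the matter is lighter than you suggest: since $\nabla J=0$ at the K\"ahler base point and $X_u=J\,\mathrm{grad}\,u$, a two-line computation gives $(\mathcal{L}_{X_u}g)(Y,Z)=-\bigl(\nabla du(Y,JZ)+\nabla du(JY,Z)\bigr)$, whose $J$-invariant contribution cancels, so the anti-invariant part of the Hessian is literally $\tfrac12(\mathcal{L}_{X_u}g)(\cdot,J\cdot)$ --- no Weitzenb\"ock identity is needed, and this is also how one sees that the infinitesimal action is the operator $2\sqrt{-1}\nabla''\nabla''u$ appearing in Theorem \ref{symp2} and consistent with (\ref{eq2}). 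Second, your computation never uses integrability of the nearby $J_t$ beyond the fact that the base point is K\"ahler, which is as it should be, since $S_{J_t}$ is the Riemannian scalar curvature of $g_{J_t}=\omega_0(\cdot,J_t\cdot)$; just be sure to fix once and for all the sign convention for the $K$-action on $Z$ and the normalization of the $L^2$ form, as you note, since these determine whether $\mu$ or $-\mu$ is the moment map.
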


In this situation there is no action on  $Z$ of the complexification  $K^c$ of $K$. However there is a natural infinitesimal action on $Z$ of the complexified Lie algebra $\mathfrak k^{\bfC}$. 
This gives $Z$ a foliation structure and each leaf can be regarded as the set of  all $(\omega_0, J)$ which corresponds to $(\omega, J_0)$ with 
$[\omega] = [\omega_0]$ via Moser's theorem.
In this sense each leaf can be regarded as a space of K\"ahler forms in a given K\"ahler class. 
Ignoring this subtlety one may apply Propositions \ref{W0}, 
\ref{W1} and \ref{W2} to $Z$ formally then they imply three well-known results in K\"ahler geometry which we now explain.

Before explaining them let us digress by a remark. Theorem \ref{FujDon} implies that $J$ is a critical point of
$$ J \mapsto \int_M |S_J|^2 \omega_0^m $$
if and only if $(M, J, \omega_0)$ is an extremal K\"ahler manifold. 

First of all Proposition \ref{W0} implies the following result. 
The function $\ell$ in Proposition \ref{W0} amounts to a functional on the space of K\"ahler forms in a given K\"ahler class, called the Mabuchi K-energy.

\begin{theorem}[\cite{chentian04}]
Let $M$ be a compact K\"ahler manifold, $[\omega_0]$ a fixed K\"ahler class. There is at most one connected component of the space of all
constant scalar curvature K\"ahler metrics in $[\omega_0]$.
If there is one component, the Mabuchi K-energy attains its minimum on this component.
In particular the Mabuchi K-energy is bonded from below if there exits a constant scalar curvature K\"ahler metric in $[\omega_0]$.
\end{theorem}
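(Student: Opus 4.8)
The plan is to transfer the content of Proposition~\ref{W0} from the finite-dimensional model to the infinite-dimensional setting described by Theorem~\ref{FujDon}, interpreting each ingredient appropriately. First I would set up the dictionary: by Theorem~\ref{FujDon}, the scalar curvature map $\mu(J) = S_J$ (viewed via the pairing $\langle \mu(J),u\rangle = \int_M S_J\,u\,\omega_0^m$) is a moment map for the $K$-action on $Z$, where $K$ is the group of Hamiltonian symplectomorphisms. A zero of the moment map on a leaf of the $\mathfrak{k}^{\bfC}$-foliation corresponds to a K\"ahler form in $[\omega_0]$ whose scalar curvature is $L^2$-orthogonal to all normalized Hamiltonians, which forces $S_J$ to be constant (equal to the cohomological average $c = 2\pi m\,c_1(M)\cdot[\omega_0]^{m-1}/[\omega_0]^m$). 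Conversely a cscK metric in $[\omega_0]$, being a point of the leaf at which $\mu$ vanishes, is precisely a zero of the moment map. Thus ``connected components of cscK metrics in $[\omega_0]$'' match ``connected components of $\{\mu = 0\}$'' on the relevant leaf.

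Next I would identify the function $\ell$ from Proposition~\ref{W0} with the Mabuchi K-energy $\mathcal{M}_{[\omega_0]}$. In the finite-dimensional model $\ell(\gamma) = \log|\gamma|^2$ on the $K^c$-orbit lifted to $L^{-1}$, and its restriction to a $K^c$-orbit in $Z$ is the log of the Quillen/Donaldson norm functional; the first-variation computation (originally Mabuchi, and recalled by Tian~\cite{tian94}) shows that along a path $\omega_t = \omega_0 + \sqrt{-1}\partial\bar\partial\varphi_t$ one has $\frac{d}{dt}\ell = -\int_M \dot\varphi_t\,(S_{\omega_t} - c)\,\omega_t^m$, which is exactly the defining variational formula for the Mabuchi K-energy. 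In particular the critical points of $\ell$ along a leaf are the cscK metrics, matching ``$\ell$ has a critical point iff $\mu$ has a zero on $\Gamma$'' in the model. Convexity of $\ell$ corresponds to the convexity of the K-energy along geodesics in the space of K\"ahler potentials — this is the infinite-dimensional analogue of the second bullet before Proposition~\ref{W0}.

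Then, granting these identifications, the two conclusions follow by literally repeating the proof of Proposition~\ref{W0}: (i) if two cscK metrics lay in distinct connected components of $\{\mu = 0\}$ on the leaf, convexity of $\ell$ together with the fact that the critical set of a convex function is connected (two minima are joined by a segment of minima) yields a contradiction — so there is at most one component; (ii) since $\ell$ is convex and the zero set of $\mu$ is nonempty, $\ell$ attains its minimum there, hence the K-energy is bounded below on the leaf, i.e.\ bounded below on the space of K\"ahler metrics in $[\omega_0]$, and it attains this minimum exactly on the cscK component.

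The main obstacle is precisely the subtlety flagged in the excerpt just before the theorem: there is \emph{no} genuine $K^c$-action on $Z$, only an infinitesimal $\mathfrak{k}^{\bfC}$-action integrating to a foliation, and the leaves are not finite-dimensional, so ``convexity of $\ell$'' and ``existence of minimizing geodesics joining two points'' are nontrivial analytic statements rather than formal consequences of the model. Making the argument rigorous requires the theory of geodesics in the space of K\"ahler potentials (Mabuchi, Semmes, Donaldson) — in general only $C^{1,1}$ geodesics exist (Chen's regularity theorem), and one must check that the K-energy is still convex, and finite and continuous, along such weak geodesics. This regularity/convexity input is exactly what makes the rigorous proof (due to Chen--Tian~\cite{chentian04}) substantially harder than the formal finite-dimensional picture, and it is the step I expect to absorb essentially all the real work; everything else is bookkeeping with the moment-map dictionary.
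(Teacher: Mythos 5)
Your proposal follows exactly the route the paper takes: apply Proposition \ref{W0} formally to the Fujiki--Donaldson moment map picture of Theorem \ref{FujDon}, identifying the function $\ell$ with the Mabuchi K-energy on the leaf corresponding to the K\"ahler class $[\omega_0]$, and defer the rigorous convexity/geodesic regularity input to Chen--Tian \cite{chentian04}. The paper itself only sketches this formal derivation (explicitly flagging the same subtlety about the absence of a genuine $K^c$-action), so your account matches — and if anything elaborates — its argument.
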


Let us see next what Proposition \ref{W1} implies.
First of all, since $x \in Z$ is an $\omega_0$-compatible complex structure,
its stabilizer subgroup 
$K_x$ consists of all biholomorphisms expressed as Hamiltonian diffeomorphisms.

Choose any $\omega \in [\omega_0]$. Then by (\ref{eq2}), a Hamiltonian holomorphic vector field $X$ 
is expressed as $X = \sqrt{-1} \mathrm{grad}^{1,0} u_X$. Here, $\mathrm{grad}^{1,0} u_X$ 
is the $(1,0)$-part 
$$\mathrm{grad}^{1,0} u_X = \sum_{i,j = 1}^m 
g^{i\barj}\frac{\partial u_X}{\partial z^{\barj}}
\frac{\partial}{\partial z^i}$$
of the gradient vector field of $u_X$. 
Then by Proposition \ref{W1} we obtain a Lie algebra homomorphism 
\begin{equation}\label{W3}
f(X) :=  -\sqrt{-1}\langle \mu(J), u_X \rangle = -\sqrt{-1} \int_M u_X S_J \omega^m 
 =  \int_M XF \ \omega^m
\end{equation}
where $F \in C^{\infty}(M)$ is given by
$$ \Delta F = S_J - \frac{\int_M S_J \omega^m}{\int_M \omega^m}.$$

\begin{theorem}[\cite{futaki83.1}, \cite{calabi85}]
Let $M$ be a compact K\"ahler manifold, $[\omega_0]$ a fixed K\"ahler class.
Then the Lie algebra homomorphism  $f$ given by  (\ref{W3}) 
does not depend on the choice of a K\"ahler form $\omega \in [\omega_0]$.
Further, if there exists a constant scalar curvature K\"ahler metric in the K\"ahler class $[\omega_0]$ then we have $f = 0$.
\end{theorem}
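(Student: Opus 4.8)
The statement has two parts, and I would dispatch the vanishing assertion first. If $\omega \in [\omega_0]$ has constant scalar curvature, then $S_J$ equals its average $\bar S := \int_M S_J\,\omega^m/\int_M\omega^m$, so the function $F$ with $\Delta F = S_J - \bar S$ satisfies $\Delta F = 0$ and is therefore constant, being harmonic on a compact manifold; since every $X \in {\mathfrak h}_0(M)$ annihilates constants, $f(X) = \int_M XF\,\omega^m = 0$. (One does not even need the normalization of $u_X$ here, only the derivation property of $X$.)

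For the independence of $f$ on the choice of $\omega \in [\omega_0]$ I would argue directly, since the $K^c$-action invoked in Proposition \ref{W1} exists here only infinitesimally. Fix $X \in {\mathfrak h}_0(M)$ and two K\"ahler forms in $[\omega_0]$; as K\"ahler potentials form a convex set, join them by the path $\omega_t = \omega_0 + t\sqrt{-1}\,\p\barpartial\varphi$, $t \in [0,1]$. Set $f_t(X) = \int_M XF_t\,\omega_t^m$ with $\Delta_t F_t = S_t - \bar S$, where $\bar S = \int_M S_t\,\omega_t^m/\int_M\omega_t^m$ depends only on the fixed class, not on $t$. The plan is to show $\tfrac{d}{dt}f_t(X) \equiv 0$ and integrate from $0$ to $1$. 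This rests on the standard first-variation formulae along the path: the variation of the volume form $\tfrac{d}{dt}\omega_t^m = m\sqrt{-1}\,\p\barpartial\varphi\wedge\omega_t^{m-1} = (\Delta_t\varphi)\,\omega_t^m$ (up to normalization of $\Delta_t$), and the variation of the scalar curvature, which can be written through the Lichnerowicz-type operator $\mathcal D_t\psi = \barpartial\,\mathrm{grad}^{1,0}\psi$ and its formal adjoint. After substituting and integrating by parts, the surviving terms should assemble into a multiple of $\int_M \varphi\,\overline{\mathcal D_t^{\ast}\mathcal D_t\, u_X}\,\omega_t^m$ together with lower-order terms that likewise pair against $\mathcal D_t u_X$, and all of these vanish because $X = \sqrt{-1}\,\mathrm{grad}^{1,0}u_X$ is holomorphic, which is exactly the condition $\mathcal D_t u_X = 0$. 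One caveat to handle cleanly: the normalization (\ref{eq3}) of $u_X$ is taken with respect to $\omega_t$ and hence varies with $t$, but replacing $u_X$ by $u_X + c(t)$ changes $f_t(X)$ by $\int_M X(c(t))\,\omega_t^m = 0$, so the normalization may be ignored in this computation; equivalently one differentiates $f(X) = -\sqrt{-1}\int_M u_X S_t\,\omega_t^m$ directly with the same outcome.

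The essential difficulty is concentrated in this single variational computation: correctly assembling the first variations of $S$ and of $\omega^m$ (signs and conventions demand care), carrying out the integrations by parts, and recognizing the leftover as the pairing of $\varphi$ against $\mathcal D^{\ast}\mathcal D\,u_X = 0$. The remaining ingredients are routine — convexity of the space of potentials, constancy of $\bar S$ within a fixed class, harmonicity forcing $F$ constant in the cscK case, and the homomorphism property of $f$, which is already supplied by Proposition \ref{W1}. Conceptually the computation is the rigorous counterpart of the $K^c$-equivariance in Proposition \ref{W1}: the curve $\omega_t$ corresponds via Moser's theorem to a path inside a single leaf of the foliation of $Z$ by the infinitesimal ${\mathfrak k}^{\bfC}$-action, and $\tfrac{d}{dt}f_t \equiv 0$ is the infinitesimal form of $f_{gx}(Y) = f_x(\mathrm{Ad}(g^{-1})Y)$ once one accounts for the $J$-dependence of the identification of $({\mathfrak k}^c)_J$ with the Hamiltonian holomorphic vector fields.
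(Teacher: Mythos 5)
Your proposal is correct, but it reaches the theorem by a genuinely different route from the one this article takes. Here the statement is presented as the formal infinite-dimensional shadow of Proposition \ref{W1}: by Theorem \ref{FujDon} the scalar curvature is the moment map for the Hamiltonian group acting on the space $Z$ of $\omega_0$-compatible complex structures, a leaf of the foliation generated by the infinitesimal ${\mathfrak k}^{\mathbb C}$-action is identified with the K\"ahler class $[\omega_0]$ via Moser's theorem, and one then applies the finite-dimensional equivariance and vanishing statements of Proposition \ref{W1} ``ignoring the subtlety'' that $K^c$ does not actually act, the rigorous proofs being delegated to the cited references. You instead give the classical direct argument: vanishing at a cscK metric because $F$ is then harmonic, hence constant; and invariance by joining two forms in $[\omega_0]$ by $\omega_t=\omega_0+t\sqrt{-1}\,\partial\overline{\partial}\varphi$, differentiating, and recognizing that after the first variations of $\omega_t^m$ and $S_t$ and integration by parts every surviving term pairs against $\mathcal{D}_t u_X=\overline{\partial}\,\mathrm{grad}^{1,0}u_X=0$. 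This is exactly the computation carried out in the original sources \cite{futaki83.1}, \cite{calabi85} (and in Bando's and later treatments), so your plan is sound and, unlike the formal moment-map derivation, self-contained and rigorous; what the paper's presentation buys is the conceptual explanation of \emph{why} the invariance holds, which you yourself correctly identify in your closing remark about the leafwise $K^c$-equivariance.

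One caution before you execute the variational step: when $\omega$ moves inside the class, the potential of the fixed holomorphic field $X$ changes by more than a normalization constant. From $i(X)\omega_t=-\overline{\partial}u_X^{(t)}$, see (\ref{eq2}), and the holomorphy of $X$ one finds $u_X^{(t)}=u_X-\sqrt{-1}\,t\,X\varphi+c(t)$ (up to sign conventions), so if you differentiate $-\sqrt{-1}\int_M u_X S_t\,\omega_t^m$ ``directly'' you must include the contribution of $\dot u_X$ proportional to $X\varphi$ — it is essential to the cancellation and cannot be discarded along with $c(t)$. Your primary formulation $f_t(X)=\int_M XF_t\,\omega_t^m$ avoids this pitfall, since no potential $u_X$ appears in it, and with that understood the computation closes as you describe.
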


Proposition \ref{W2} implies the following.

\begin{theorem}[\cite{calabi85}]\label{decomp}
Let $M$ be a compact extremal K\"ahler manifold. Then the Lie algebra ${\mathfrak h}(M) $ has a semi-direct sum decomposition
$$ {\mathfrak h}(M) = \mathfrak h_0 + \sum_{\lambda > 0}\mathfrak h_{\lambda} $$
where $\mathfrak h_{\lambda}$ is the $\lambda$-eigenspace of $\mathrm{ad}(\sqrt{-1}\mathrm{grad}^{1,0} S)$,
and
$\sqrt{-1}\mathrm{grad}^{1,0}S$ belongs to the center of $\mathfrak h_0$.
Further $\mathfrak h_0$ is reductive.
\end{theorem}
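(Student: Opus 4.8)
The plan is to deduce Theorem \ref{decomp} from Proposition \ref{W2} applied to the finite-dimensional model $Z$ of $\omega_0$-compatible complex structures, treating the (formal) infinitesimal $K^{\bfC}$-action as if it were a genuine group action on $Z$. First I would observe that an extremal K\"ahler metric on $(M,J,\omega_0)$ is, by the Remark following Theorem \ref{FujDon}, a critical point of $J\mapsto\int_M|S_J|^2\omega_0^m$, i.e. an \emph{extremal point} in the sense preceding Proposition \ref{W2} (here one uses that $\phi(x)=|\mu(x)|^2$ corresponds under Theorem \ref{FujDon} to $\int_M S_J^2\,\omega_0^m$ up to the fixed normalization, and that the $K$-invariant $L^2$ inner product on $\mathfrak k$ provides the needed identification $\mathfrak k\cong\mathfrak k^{\ast}$). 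Then the conclusion of Proposition \ref{W2} gives a decomposition $(\mathfrak k^{\bfC})_x=(\mathfrak k_x)^{\bfC}+\sum_{\lambda>0}\mathfrak k^{\bfC}_\lambda$ with $\sqrt{-1}\mu(x)$ central in $(\mathfrak k_x)^{\bfC}$, and $\mathfrak k^{\bfC}_\lambda$ the $\lambda$-eigenspace of $\mathrm{ad}(\sqrt{-1}\mu(x))$.

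Next I would translate each object on the model side into its K\"ahler-geometric counterpart. The stabilizer $(\mathfrak k^{\bfC})_x$ at a complex structure $x=J$ consists of Hamiltonian vector fields whose flows are holomorphic, so under the correspondence $u_X\leftrightarrow X=\sqrt{-1}\,\mathrm{grad}^{1,0}u_X$ from (\ref{eq2}) it is identified with $\mathfrak h_0(M)$; since $M$ is compact K\"ahler with nontrivial holomorphic vector fields arising from an extremal metric, a Hodge-theoretic argument (already implicit in the setup around (\ref{eq2})) identifies $\mathfrak h_0(M)$ with all of $\mathfrak h(M)$ up to the parallel vector fields, which is the standard point needed to pass from $\mathfrak h_0$ to $\mathfrak h(M)$ in the statement. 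The element $\sqrt{-1}\mu(x)$ corresponds, via $\langle\mu(J),u\rangle=\int_M S_J u\,\omega_0^m$ and the $L^2$-identification, precisely to the Hamiltonian holomorphic vector field $\sqrt{-1}\,\mathrm{grad}^{1,0}S_J$ attached to the scalar curvature (this is where the extremal condition is essential: $\mathrm{grad}^{1,0}S$ is holomorphic, hence lies in $\mathfrak h_0$). Therefore $(\mathfrak k_x)^{\bfC}$ becomes $\mathfrak h_0$ in the theorem's notation, $\mathfrak k^{\bfC}_\lambda$ becomes $\mathfrak h_\lambda$, the eigenspace of $\mathrm{ad}(\sqrt{-1}\,\mathrm{grad}^{1,0}S)$, and centrality of $\sqrt{-1}\mu(x)$ becomes centrality of $\sqrt{-1}\,\mathrm{grad}^{1,0}S$ in $\mathfrak h_0$. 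Reductivity of $\mathfrak h_0$ follows from the last sentence of Proposition \ref{W2}: $\mathfrak h_0$ is the stabilizer Lie algebra at a zero of the moment map of the induced action (the scalar curvature of $(M,J,\omega_0)$ is constant after subtracting $\mathrm{grad}^{1,0}S$, i.e. $\mathfrak h_0$ is Matsushima-reductive by the constant scalar curvature case), which is the usual consequence of the polystability/zero-moment-map picture.

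The semi-direct structure is then immediate from the eigenspace description: $[\mathfrak h_0,\mathfrak h_\lambda]\subset\mathfrak h_\lambda$ because $\sqrt{-1}\,\mathrm{grad}^{1,0}S$ is central in $\mathfrak h_0$, so $\mathrm{ad}$ of $\mathfrak h_0$ commutes with $\mathrm{ad}(\sqrt{-1}\,\mathrm{grad}^{1,0}S)$ and preserves its eigenspaces, while $[\mathfrak h_\lambda,\mathfrak h_\mu]\subset\mathfrak h_{\lambda+\mu}$ (with $\mathfrak h_{\lambda+\mu}=0$ if $\lambda+\mu$ is not an eigenvalue) shows $\sum_{\lambda>0}\mathfrak h_\lambda$ is a nilpotent ideal; hence the decomposition is a semi-direct sum with reductive part $\mathfrak h_0$.

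The main obstacle, and the point requiring the most care, is that Proposition \ref{W2} is a \emph{finite-dimensional} statement about a genuine $K^{\bfC}$-action, whereas on $Z$ there is only the formal infinitesimal $\mathfrak k^{\bfC}$-action noted after Theorem \ref{FujDon}, and $Z$ is infinite-dimensional; the identifications of stabilizers with $\mathfrak h_0(M)$, of $\mu(J)$ with $\mathrm{grad}^{1,0}S_J$, and of eigenspaces require genuine elliptic theory on $M$ (the solvability of $\Delta F=S_J-\bar S$ and the Hodge decomposition behind (\ref{eq2})) rather than the formal transcription. So in the write-up I would present the finite-dimensional proof as a heuristic and then indicate that the rigorous argument, due to Calabi, replaces each model-theoretic step by its verified analytic analogue on $M$, referencing \cite{calabi85}; everything beyond that translation is the purely algebraic eigenspace bookkeeping above.
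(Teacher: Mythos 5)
Your proposal takes essentially the same route as the paper: the paper likewise obtains Theorem \ref{decomp} by formally applying Proposition \ref{W2} to the space $Z$ of $\omega_0$-compatible complex structures, using Theorem \ref{FujDon} to identify the moment map with the scalar curvature, the stabilizer with the Hamiltonian holomorphic vector fields via (\ref{eq2}), and $\sqrt{-1}\mu(J)$ with $\sqrt{-1}\,\mathrm{grad}^{1,0}S$, while deferring the rigorous argument to Calabi (\cite{calabi85}). Your explicit caveat that the infinitesimal $\mathfrak k^{\bfC}$-action is only formal and that the genuine proof replaces each step by its analytic counterpart is exactly the paper's own stance ("Ignoring this subtlety one may apply Propositions \ref{W0}, \ref{W1} and \ref{W2} to $Z$ formally").
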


From this theorem it follows that if $M$ admits a constant scalar curvature K\"ahler metric then we have  ${\mathfrak h}(M) = \mathfrak h_0$, and
therefore ${\mathfrak h}(M) $ is reductive. This result is called the Lichnerowicz-Matsushima theorem and is a well-known obstruction
for the existence of K\"ahler metrics of constant scalar curvature.

Next, we consider the case of perturbed scalar curvature, and see that 
we obtain similar results to the unperturbed case as symplectic geometry and dissimilar results as K\"ahler geometry.

In what follows we use $\omega$ instead of $\omega_0$ to denote a fixed K\"ahler form.
For a pair $(J, t)$ of a real number with sufficiently small $t$
and an $\omega$-compatible complex structure $J \in Z$, we define a smooth function  $S(J,t)$ on 
$M$ by
\begin{equation}\label{S(J,t)}
 S(J,t)\, \omega^m = c_1(J) \wedge \omega^{m-1} + 
t c_2(J) \wedge \omega^{m-2} + \cdots + t^{m-1} c_m(J).
\end{equation}
Here $c_i(J)$ denotes the $i$-th Chern form with respect to $(J, \omega)$, which is defined by
\begin{equation}\label{Chern}
\det(I + \frac{\sqrt{-1}}{2\pi}t\Theta) = 1 + tc_1(J) + \cdots + t^m c_m(J)
\end{equation}
where  $\Theta$ denotes the curvature form of the Levi-Civita connection for $(J, \omega)$. 

We say that the K\"ahler metric $g$ of a K\"ahler manifold
$(M, J, \omega)$ is a {\bf $t$-perturbed extremal K\"ahler metric} or simply perturbed extremal K\"ahler metric
if 
\begin{equation}\label{grad}
\mathrm{grad}^{1,0}S(J,t) = \sum_{i,j = 1}^m g^{i\barj}\frac{\partial S(J,t)}{\partial \barz^j} \frac{\partial}
{\partial z^i}
\end{equation}
is a holomorphic vector field.

\begin{proposition}[\cite{futaki06}]\label{critical} If we define a functional $\Phi$ on
$Z$ by
\begin{equation}\label{Phi}
\Phi(J) = \int_M S(J,t)^2 \omega^m
\end{equation}
then the critical points of $\Phi$ are perturbed extremal K\"ahler metric.
\end{proposition}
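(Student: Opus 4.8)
The plan is to compute the first variation of $\Phi(J) = \int_M S(J,t)^2\,\omega^m$ under a variation of the complex structure $J$ within $Z$, and to show that the Euler--Lagrange equation is precisely the condition that $\mathrm{grad}^{1,0}S(J,t)$ be holomorphic. This mirrors the unperturbed case, where Theorem \ref{FujDon} identifies $S_J$ with the moment map $\mu(J)$ and the functional $J \mapsto \int_M |S_J|^2\omega^m$ is $|\mu|^2$, so that critical points are extremal K\"ahler metrics (the digression just before Theorem \ref{decomp}). The key point is to understand how $S(J,t)$ varies; since the volume form $\omega^m$ is fixed, equation (\ref{S(J,t)}) shows that $\delta S(J,t)$ is governed by the variations $\delta c_i(J)$ of the Chern forms, paired against $\omega^{m-i}$.

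First I would fix the setup: a tangent vector to $Z$ at $J$ is a symmetric tensor $\dot J$ of type $(0,2)$ (equivalently, an infinitesimal deformation of complex structure), and the corresponding variation of the metric, the Levi-Civita connection, and hence the curvature form $\Theta$ can be written down explicitly. From (\ref{Chern}), each $\delta c_i(J)$ is a universal polynomial expression in $\Theta$ and $\delta\Theta$, and by the standard Chern--Weil/Bott--Chern formalism the variation $\delta c_i(J)$ is $\partial\bar\partial$-exact up to the terms detected by pairing with $\omega^{m-i}$; more precisely, the pairing $\int_M \delta c_i(J)\wedge \omega^{m-i}$ can be integrated by parts to produce an expression linear in $\dot J$ whose ``symbol'' is a second-order operator applied to $\dot J$. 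Assembling these over $i = 1,\dots,m$ with the weights $t^{i-1}$ gives $\delta S(J,t)$ as $L(\dot J)$ for an appropriate linear differential operator $L$ (a $t$-deformation of the operator appearing in the unperturbed computation of Fujiki and Donaldson).

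Next I would write the first variation as
\begin{equation}\label{firstvar}
\delta\Phi(J) = 2\int_M S(J,t)\,\delta S(J,t)\,\omega^m = 2\int_M S(J,t)\,L(\dot J)\,\omega^m = 2\int_M L^{\ast}(S(J,t))\cdot \dot J\,\omega^m,
\end{equation}
where $L^{\ast}$ is the formal adjoint, and the inner contraction against $\dot J$ is the pointwise $L^2$-pairing on $\mathrm{Sym}^2(T^{\ast 0,1}M)$ used to give $Z$ its K\"ahler structure. Vanishing of (\ref{firstvar}) for all $\dot J$ tangent to $Z$ forces the projection of $L^{\ast}(S(J,t))$ onto the tangent space of $Z$ to vanish. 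The final step is to identify this vanishing with the holomorphicity of $\mathrm{grad}^{1,0}S(J,t)$: exactly as in the unperturbed case, $L^{\ast}$ factors (after projection) through the Lichnerowicz-type operator $\mathcal D^{\ast}\mathcal D$ whose kernel consists of functions $u$ with $\mathrm{grad}^{1,0}u$ holomorphic, so $L^{\ast}(S(J,t)) = 0$ on the relevant subspace if and only if $\mathrm{grad}^{1,0}S(J,t)$ is a holomorphic vector field, which is the definition (\ref{grad}) of a perturbed extremal K\"ahler metric.

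The main obstacle is the second step: controlling the variations of the higher Chern forms $c_i(J)$ for $i \geq 2$ and verifying that, after pairing with $\omega^{m-i}$ and integrating by parts, the resulting operator $L$ has the same leading-order structure as in the unperturbed case (so that $L^{\ast}$ composed with the projection onto $T_JZ$ really does reduce to the $\mathcal D^{\ast}\mathcal D$-type operator). This is where the bookkeeping of Chern--Weil theory under deformations of $J$ is genuinely delicate; the higher-order terms in $t$ contribute lower-order corrections that do not affect the symbol but must be shown not to obstruct the identification. Once that computation is in hand, the rest follows the unperturbed template verbatim.
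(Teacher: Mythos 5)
Your plan defers exactly the step that carries all the content, and the way you propose to close it is not right. You compute $\delta\Phi(\dot J)=2\int_M S(J,t)\,\delta S(J,t)\,\omega^m$, write $\delta S(J,t)=L(\dot J)$, and then take the formal adjoint $L^{\ast}$ \emph{with respect to the standard $L^2$-pairing on} $\mathrm{Sym}^2(T^{\ast 0,1}M)$, asserting that after projection the Euler--Lagrange operator reduces, ``exactly as in the unperturbed case,'' to an operator whose kernel is detected by holomorphicity of $\mathrm{grad}^{1,0}S(J,t)$. That assertion is precisely what fails to be automatic once the higher Chern forms enter: with the standard pairing the first variation is \emph{not} simply the pairing of $\dot J$ against $2\sqrt{-1}\nabla''\nabla'' S(J,t)$. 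The missing idea in the paper (following \cite{futaki06}) is that one must replace the standard $L^2$-structure on $T_JZ$ by the $t$-perturbed Hermitian pairing (\ref{inner}) and symplectic form (\ref{symp}), built by inserting $\omega\otimes I+\frac{\sqrt{-1}}{2\pi}t\Theta$ into the polarized $c_m$; Theorem \ref{symp2} then says $\int_M u\,\delta S(J,t)\,\omega^m=\Omega_{J,t}\bigl(2\sqrt{-1}\nabla''\nabla'' u,\dot J\bigr)$, i.e.\ $S(J,t)$ is a moment map only for this \emph{perturbed} symplectic structure. Taking $u=S(J,t)$ gives $\delta\Phi(\dot J)=2\,\Omega_{J,t}\bigl(2\sqrt{-1}\nabla''\nabla'' S(J,t),\dot J\bigr)$, and nondegeneracy of $\Omega_{J,t}$ for $|t|$ small forces $\nabla''\nabla'' S(J,t)=0$, which is (\ref{grad}). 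In your scheme this means $L^{\ast}$ computed against the standard pairing differs from $2\sqrt{-1}\nabla''\nabla''$ by a $t$-dependent endomorphism of $T_JZ$, and you would still have to prove that endomorphism invertible (morally, nondegeneracy of $\Omega_{J,t}$); nothing in your outline supplies this, and your appeal to a Lichnerowicz-type factorization $\mathcal D^{\ast}\mathcal D$ on functions addresses the fixed-$J$, varying-$\omega$ problem rather than the present one.

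That this is not mere bookkeeping is shown by the paper's own warning immediately after Theorem \ref{symp2}: for fixed $J$ the critical points of $\omega\mapsto\int_M|S(\omega,t)|^2\omega^m$ on a K\"ahler class are apparently \emph{not} perturbed extremal metrics, so the ``unperturbed template'' does not carry over verbatim; the reason the fixed-$\omega$, varying-$J$ functional does work is exactly the perturbed moment-map identity, which your proposal never introduces. To repair the argument, state and use the perturbed pairing (\ref{inner})--(\ref{symp}) and Theorem \ref{symp2}, then conclude by nondegeneracy for small $t$ as above.
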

The proof of Proposition \ref{critical} follows from the fact that the perturbed scalar curvature becomes the moment map
(cf. Theorem \ref{symp2} below) just as in the unperturbed case.
The perturbed scalar curvature becomes the moment map with respect to the perturbed symplectic structure
on $Z$ described as follows.
The tangent space of 
$Z$ at $J$ is identified with a subspace of  $\mathrm{Sym}(\otimes^2T^{\ast 0,1}M)$. 
When the real number $t$ is small enough, we define the Hermitian structure on $\mathrm{Sym}(\otimes^2T^{\ast 0,1}M)$ by
\begin{equation}\label{inner}
(\nu, \mu)_t = \int_M mc_m(\overline{\nu}_{jk}
\,\mu^i{}_\barl \frac {\sqrt{-1}}{2\pi}\,
dz^k \wedge d\overline{z^{\ell}}, \omega\otimes I + \frac {\sqrt{-1}}{2\pi}\,t\Theta,
\cdots, \ \omega\otimes I + \frac {\sqrt{-1}}{2\pi}\,t\Theta).
\end{equation}
Here 
$\mu$ and $\nu$ are tangent vectors in $T_J Z$, and $c_m$ is regarded as the polarization of
the determinant which is a $GL(m,\bfC)$-invariant polynomial.
That is,
$c_m(A_1, \cdots, A_m)$ is the coefficient of $m!\, t_1 \cdots t_m$ in 
 $\det(t_1A_1 + \cdots +
t_mA_m)$. Further, 
 $I$ denotes the identity matrix, 
$\Theta = \overline\partial (g^{-1}\partial g)$ denotes the curvature form the Levi-Civita connection and
$u_{jk}\mu^i_{\bar l}$  is regarded as an endomorphism of $T^{1,0}_JM$ that sends $\partial/\partial z^j$ to $u_{jk}\mu^i_{\bar l}{\partial/\partial z^i}$.
When $t = 0$, (\ref{inner}) is the usual $L^2$-inner product. The perturbed symplectic form $\Omega_{J,t}$  at 
$J \in Z$ is given by
\begin{eqnarray}\label{symp}
&&\Omega_{J,t}(\nu,\mu) =
\Re (\nu,\sqrt{-1}\mu)_t \\
&&= \Re \int_M mc_m(\overline{\nu}_{jk}
\,\sqrt{-1}\mu^i{}_\barl \frac {\sqrt{-1}}{2\pi}\,
dz^k \wedge d\overline{z^{\ell}}, \omega\otimes I + \frac {\sqrt{-1}}{2\pi}\,t\Theta, \nonumber \\ 
&& \hspace{6.5cm}\cdots, \ \omega\otimes I + \frac {\sqrt{-1}}{2\pi}\,t\Theta) \nonumber
\end{eqnarray}
where $\Re$ stands for the real part.
\begin{theorem}[\cite{futaki06}] If \label{symp2}$\delta J = \mu$ then we have 
\begin{equation}\label{moment1}
\delta \int_M u\ S(J,t)\omega^m =  
\Omega_{J,t}(2\sqrt{-1}\nabla^{\prime\prime}\nabla^{\prime\prime}u,\mu).
\end{equation}
That is, the perturbed scalar curvature  $S(J,t)$ becomes the moment map for the action of the
group of all Hamiltonian diffeomorphisms with respect to the perturbed symplectic form
$\Omega_{J,t}$.
\end{theorem}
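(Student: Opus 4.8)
The plan is to compute the first variation of the functional $J \mapsto \int_M u\, S(J,t)\,\omega^m$ directly and identify it with $\Omega_{J,t}$ paired against the Hamiltonian flow generator associated to $u$. First I would fix $J$ and let $J_s$ be a path in $Z$ with $\dot J_0 = \mu \in T_J Z \subset \mathrm{Sym}(\otimes^2 T^{\ast 0,1}M)$. Using the defining identity (\ref{S(J,t)}), namely $S(J,t)\,\omega^m = \sum_{k=1}^{m} t^{k-1} c_k(J)\wedge \omega^{m-k}$, the variation $\delta S(J,t)$ is obtained by differentiating the Chern forms $c_k(J)$, all of which are built from the curvature $\Theta = \bar\partial(g^{-1}\partial g)$ of the Levi-Civita connection of $(M,J,\omega)$. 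The key intermediate formula to establish is the analogue of the classical Lichnerowicz identity: the linearization of $S(J,t)$ in the direction $\mu$ equals (up to the standard factor) the formal adjoint, with respect to the $t$-deformed inner product $(\cdot,\cdot)_t$, of the operator $u \mapsto 2\sqrt{-1}\,\nabla''\nabla'' u$ that sends a function to a tangent vector in $T_J Z$. This is where the Chern–Weil/polarization structure of (\ref{inner}) is essential: the weighting of the $L^2$-pairing by $mc_m(\,\cdot\,,\ \omega\otimes I + \tfrac{\sqrt{-1}}{2\pi}t\Theta,\ \dots)$ is precisely engineered so that integration by parts produces the perturbed Chern-form combination appearing in (\ref{S(J,t)}) rather than the bare scalar curvature.

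Concretely, the steps in order are: (1) record that $\delta\Theta$ under $\delta J = \mu$ is (up to lower order) $\bar\partial$ applied to a tensor linear in $\mu$, so that by Chern–Weil each $\delta c_k(J)$ is exact and its wedge against the appropriate power of $\omega$ can be handled by Stokes; (2) assemble $\delta S(J,t)\,\omega^m = \sum_k t^{k-1}\,\delta c_k(J)\wedge\omega^{m-k}$ and rewrite the right-hand side, using the polarization identity for the determinant, as a $c_m$-weighted contraction against $\mu$; (3) for the other term coming from differentiating $\omega^m$ — there is none, since $\omega$ is fixed — so only $\delta S$ contributes; (4) multiply by $u$, integrate, and integrate by parts twice, moving the two $\bar\partial$'s off $\mu$ and onto $u$, so that $u$ gets hit by $\nabla''\nabla''$ while $\mu$ survives untouched in the $c_m$-weighted pairing; (5) recognize the resulting expression as exactly $\Re(2\sqrt{-1}\nabla''\nabla'' u, \mu)_t = \Omega_{J,t}(2\sqrt{-1}\nabla''\nabla'' u, \mu)$ by the definition (\ref{symp}) of the perturbed symplectic form. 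The moment map conclusion then follows because $2\sqrt{-1}\nabla''\nabla'' u$ is, by (\ref{eq2})-type reasoning, the infinitesimal action on $Z$ of the Hamiltonian vector field of $u$, so (\ref{moment1}) is the defining property $\delta\langle\mu(J),u\rangle = \Omega_{J,t}(\rho(u),\mu)$ of a moment map (with $\langle\mu(J),u\rangle = \int_M u\,S(J,t)\,\omega^m$).

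The main obstacle will be step (4), the integration by parts, carried out with respect to the deformed metric $(\cdot,\cdot)_t$ rather than the Riemannian $L^2$ metric. One must verify that the Chern-form weights $\omega\otimes I + \tfrac{\sqrt{-1}}{2\pi}t\Theta$ are parallel enough — or that the error terms from differentiating them reorganize themselves — for the adjoint of $u\mapsto 2\sqrt{-1}\nabla''\nabla'' u$ to come out to be precisely the variation of the perturbed-scalar-curvature combination (\ref{S(J,t)}) and not that combination plus spurious curvature corrections. This is really a bookkeeping identity among $GL(m,\bfC)$-invariant polynomials (the fact that $c_k$ appears contracted against $\omega^{m-k}$ on one side matches the $c_m$-polarization against $k-1$ extra copies of $\tfrac{\sqrt{-1}}{2\pi}t\Theta$ on the other), and the cleanest route is to treat $t$ as a formal parameter and match coefficients of each power $t^{k-1}$ separately, reducing to the unperturbed Fujiki–Donaldson computation (Theorem \ref{FujDon}) at order $t^0$ and to a Chern–Weil transgression argument at higher order. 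Once that identity is in hand, the rest is routine and the theorem follows.
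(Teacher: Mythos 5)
The survey itself states Theorem \ref{symp2} without proof, deferring to \cite{futaki06}, so I am judging your outline against the argument that theorem actually requires. Your overall route is the expected one: vary $J$ with $\omega$ fixed, differentiate the Chern forms entering (\ref{S(J,t)}) via Chern--Weil, integrate by parts against the $c_m$-weighted pairing (\ref{inner}), and read off the moment map property from the identification of $2\sqrt{-1}\nabla''\nabla'' u$ with the infinitesimal Hamiltonian action on $Z$. The problem is that your step (4) is not a step --- it is the theorem. The entire content of the statement is that the double integration by parts with respect to the perturbed pairing returns exactly the linearization of the combination (\ref{S(J,t)}), with no leftover curvature terms; you yourself flag this as ``the main obstacle'' and propose to settle it by matching powers of $t$, with higher orders handled by ``a Chern--Weil transgression argument.'' That does not close the gap: at order $t^{k-1}$ the identity couples $\delta c_k(J)\wedge\omega^{m-k}$ (whose transgression involves the variation of the connection, hence first derivatives of $\mu$) with the $c_m$-polarization weighted by $k-1$ copies of $\frac{\sqrt{-1}}{2\pi}t\Theta$, and proving it needs specific inputs you never invoke: the Bianchi identity $d^\nabla\Theta=0$ together with $d\omega=0$, which make the weight factors $\omega\otimes I+\frac{\sqrt{-1}}{2\pi}t\Theta$ ``closed'' so that no error terms appear when the two derivatives are moved from $\mu$ to $u$, and the $GL(m,\bfC)$-invariance of $c_m$, exploited by taking the determinant simultaneously in the fiber and base indices --- the same symmetry trick used in section 3 of this article to show that the second term of (\ref{family}) vanishes for $\phi=c_k$. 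Without these ingredients your reduction to a ``bookkeeping identity among invariant polynomials'' is an unproved assertion, and only the $t^0$ coefficient genuinely reduces to the Fujiki--Donaldson computation of Theorem \ref{FujDon}.

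Two smaller inaccuracies feed into the same gap. In step (1) you say the exactness of $\delta c_k(J)$ lets the wedge with $\omega^{m-k}$ ``be handled by Stokes''; since these forms are multiplied by $u$ inside the integral, Stokes by itself gives nothing --- everything hinges on the subsequent integration by parts, so the weight-closedness issue cannot be postponed to a remark. And the variation of the Levi-Civita (Chern) connection under $\delta J=\mu$ with $\omega$ fixed is linear in $\mu$ and its first derivatives, not simply ``$\bar\partial$ of a tensor linear in $\mu$ up to lower order''; the precise form of $\delta\theta$ is what enters the transgression formula, and extracting the operator $2\sqrt{-1}\nabla''\nabla''$ (rather than some other second-order operator, and with the right constant) is part of the computation you have deferred. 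As written, the proposal is a correct plan with the decisive identity left unverified, so it does not yet constitute a proof.
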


Let us now fix $J$ and write $S(\omega, t)$ for the perturbed scalar curvature in terms of  $\omega \in [\omega_0]$
given by the right hand side of (\ref{S(J,t)}). It appears that the critical points of the functional 
$$ \omega \mapsto \int_M |S(\omega, t)|^2 \omega^m $$
on $[\omega_0]$ are not perturbed extremal K\"ahler metrics
(cf. Remark 3.3 in  \cite{futaki06}).
In the perturbed case we also have results corresponding to Proposition 
\ref{W1} and \ref{W2} as in the unperturbed case. In fact, corresponding to Proposition \ref{W1}  we obtain Bando's obstructions (\cite{bando83}) for
higher Chern forms to be harmonic, see \cite{futaki07.1} and section 3 of this article.
Proposition \ref{W2} suggests that we should have a similar decomposition theorem in the perturbed case.
Using the arguments of L.-J. Wang (\cite{Lijing06}) one can give a rigorous proof of the decomposition theorem, see
\cite{futaki07.1}. However it seems hard to give a rigorous proof of the uniqueness theorem that 
Proposition \ref{W0} suggests.

\section{Asymptotic Chow semistability and integral invariants}

In the previous section we saw how compact K\"ahler manifolds with constant scalar curvature 
can be seen from the viewpoints of GIT stability through the picture of moment maps.
We also saw how the well-known obstructions such as the Lie algebra character $f$ given by  (\ref{W3}) 
and the Lichnerowicz-Matsushima theorem appear in this moment map picture.
In this section we shall see that the character $f$ is an obstruction for the asymptotic Chow semistability.
This fact shows that the existence of constant scalar curvature K\"ahler metric really concerns stability
in algebraic geometry. This section is based on \cite{futaki04-1}.

Let $P_G \to M$ be a holomorphic principal $G$ bundle over a compact K\"ahler manifold.
We assume that $G$ is a complex Lie group acting on $P_G$ as a structure group from the right. 
We further assume that a complex Lie group $H$ acts holomorphically on $P_G$ from the left commuting with right action of $G$.
Therefore, in this case, $H$ acts also on $M$ as automorphisms.

Suppose that the principal $G$-bundle $P_G$ has a connection whose connection form is type $(1,0)$ form on $P_G$.
We call such a connection a type $(1,0)$-connection.
A typical such connection is the canonical connection of the holomorphic frame bundle of an Hermitian holomorphic vector bundle.
Namely there is a unique connection on an Hermitian holomorphic vector bundle such that the connection is compatible with the metric
and $(0,1)$-part of the covariant exterior differentiation is equal to  $\overline{\partial}$. Therefore in this case the connection form is of type $(1,0)$.

Let $\theta$ be a type $(1,0)$-connection form and $\Theta$ be the curvature form.
An element $X$ of the Lie algebra ${\mathfrak h}$ of 
$H$ defines a  $G$-invariant vector field on $P_G$. By the abuse of notation, we write $X$ such a vector field on $P_G$.
Let $I^p(G)$ be the set of all $G$-invariant polynomials of degree $p$ on 
 ${\mathfrak g}$.
 Let $p \ge m$ and, for any $\phi \in I^p(G)$ we define $f_{\phi}$ by
$$ f_{\phi}(X) = \int_M \phi(\theta(X) + \Theta).$$
Then one can prove that 
 $f_{\phi}$ is independent of the choice of the $(1,0)$-connection $\theta$, see \cite{futaki88} for the detail.
 From this it follows that 
 $f_{\phi}$ defines an element of $I^{p-m}(H)$ and can be interpreted as the image under the Gysin map of an element of the equivariant
 cohomology (\cite{futakimorita85}).
As a special case, consider the case when 
$G = GL(m,\bfC)$ and $P_G$ is the frame bundle of the holomorphic tangent bundle of a compact K\"ahler manifold
$M$. Take 
$H$ to be a complex subgroup of the automorphism group of $M$.
$H$ naturally acts on $P_G$.
In this case $I^{\ast}(G)$ is the algebras generated by the elementary symmetric functions of the eigenvalues.

For each $\phi \in I^k(G)$ and $X \in \mathfrak h_0$ we put
\begin{eqnarray}\label{family}
{\mathcal F}_{\phi}(X) &=& (m-k+1) \int_M \phi(\Theta) \wedge u_X\,\omega^{m-k}
\nonumber
\\ & & + \int_M \phi(\theta(X) + \Theta) \wedge \omega^{m-k+1}.
\end{eqnarray}
where $u_X$ is assumed to satisfy the normalization (\ref{eq3}). 

\begin{theorem}[\cite{futaki04-1}]\ \ ${\mathcal F}_{\phi}(X)$ is independent of the choice of the K\"ahler form
$\omega \in [\omega_0]$ on $M$ and also of the choice of the type $(1,0)$-connection form $\theta$ on $P_G$.
\end{theorem}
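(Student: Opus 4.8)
\emph{Proof proposal.} The plan is to show that the first variation of $\mathcal F_\phi(X)$ vanishes under the two relevant deformations, so that $\mathcal F_\phi(X)$ is constant on the (connected) affine space of type $(1,0)$-connections on $P_G$ and on the space of K\"ahler forms in $[\omega_0]$, the latter being connected by the $\partial\barpartial$-lemma. It is convenient to record first the exact identity
\[
\mathcal F_\phi(X)=\int_M\phi\big(\theta(X)+\Theta\big)\wedge(\omega+u_X)^{m-k+1},
\]
in which only the form-degree-$2m$ component contributes: expanding the right-hand side by multilinearity of the polarization of $\phi$ and by the binomial theorem, exactly the two terms of \eqref{family} survive, the coefficient $m-k+1$ being the binomial coefficient $\binom{m-k+1}{m-k}$. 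This keeps the bidegree bookkeeping below transparent.

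\emph{Independence of $\theta$.} Join two type $(1,0)$-connections by the affine path $\theta_t=\theta_0+t\alpha$; each $\theta_t$ is again a type $(1,0)$-connection and $\alpha$ is a tensorial type $(1,0)$ one-form with values in $\mathrm{ad}\,P_G$, so that $\dot\Theta_t=D^{\theta_t}\alpha$ (exterior covariant derivative) and $\tfrac{d}{dt}\theta_t(X)=\alpha(X)$. Since $X$ lifts, via the $H$-action, to a $G$-invariant \emph{holomorphic} vector field on $P_G$, Cartan's formula $\mathcal L_X=d\iota_X+\iota_X d$ gives $D^{\theta_t}(\theta_t(X))=\mathcal L_X\theta_t-\iota_X\Theta_t$, where $\mathcal L_X\theta_t$ is again tensorial of type $(1,0)$ and $\iota_X\Theta_t$ of type $(0,1)$. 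Differentiating the integrand of $\mathcal F_\phi(X)$, using $\mathrm{Ad}$-invariance of $\phi$ and the Bianchi identity to replace $\phi(D^{\theta_t}\alpha,\Theta_t^{\,k-1})$ by the exact form $d\,\phi(\alpha,\Theta_t^{\,k-1})$ (Chern--Weil transgression), then applying Stokes on the closed manifold $M$ and the relation $\iota_X\omega=-\barpartial u_X$: the $\alpha(X)$- and $\iota_X\Theta_t$-terms cancel between the two summands of \eqref{family}, and what remains is a multiple of $\int_M\phi(\mathcal L_X\theta_t,\alpha,\Theta_t^{\,k-2})\wedge\omega^{m-k+1}$, whose integrand has holomorphic type-degree $\ge m+1$ and therefore vanishes on the $m$-dimensional $M$. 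Hence $\tfrac{d}{dt}\mathcal F_\phi(X)=0$.

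\emph{Independence of $\omega\in[\omega_0]$.} By the step just proved we may assume $\theta$ has curvature $\Theta$ of type $(1,1)$ (e.g. the canonical connection of an Hermitian structure on $P_G$), so $\phi(\Theta)$ is a closed $(k,k)$-form. Put $\omega_t=\omega_0+t\,\sqrt{-1}\,\partial\barpartial\varphi$ with $\varphi$ real. From $\iota_X\omega_t=-\barpartial u_X^{(t)}$ and holomorphy of $X$ one gets $\barpartial\dot u_X=-\iota_X(\sqrt{-1}\,\partial\barpartial\varphi)=-\sqrt{-1}\,\barpartial(X\varphi)$, so $\dot u_X=-\sqrt{-1}\,X\varphi+c_\varphi$ for a constant $c_\varphi$; differentiating the normalization \eqref{eq3} and using $\partial\barpartial u_X=-\mathcal L_X\omega$ forces $c_\varphi=0$. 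Differentiating $\mathcal F_\phi(X)$ now gives three contributions: from $\dot u_X$, from $\tfrac{d}{dt}\omega_t^{\,m-k}$ in the first summand of \eqref{family}, and from $\tfrac{d}{dt}\omega_t^{\,m-k+1}$ in the second. The last is rewritten via Stokes and Bianchi using $D^{\theta}(\theta(X))=\mathcal L_X\theta-\iota_X\Theta$ (the $\iota_X\Theta$-part dies by bidegree); a further sequence of integrations by parts, using Cartan's formula, the closedness of $\phi(\Theta)$ and of $\omega_t$, and $\iota_X\omega=-\barpartial u_X$, reduces the $\mathcal L_X\theta$-part to an expression built from $X\varphi$ and $u_X$. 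One then checks that the $(X\varphi)$-terms cancel between the two summands of \eqref{family} and that the remainder assembles into a multiple of $\int_M\phi(\Theta)\wedge\omega^{m-k-1}\wedge d(u_X\,d\varphi)$, which vanishes by Stokes. Hence $\tfrac{d}{dt}\mathcal F_\phi(X)=0$, and the theorem follows.

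\emph{Main obstacle.} The only structural ingredients are Stokes' theorem, the Chern--Weil transgression formula, and the relation $\iota_X\omega=-\barpartial u_X$ together with $\int_M u_X\,\omega^m=0$. All the real work is in the bidegree accounting --- deciding which components of $\barpartial u_X$, $d\varphi$, $\Theta$ and $\alpha$ can survive after wedging with the relevant powers of $\omega$ and $\phi(\Theta)$ --- and in the two cancellations $c_\varphi=0$ and the collapse of the leftover to an exact form. On the connection side the delicate point is to establish $D^{\theta}(\theta(X))=\mathcal L_X\theta-\iota_X\Theta$ and to verify that $\mathcal L_X\theta$ stays of type $(1,0)$ for a \emph{holomorphic}, not necessarily Killing, vector field $X$ --- precisely what makes the leftover term in the $\theta$-variation die by degree.
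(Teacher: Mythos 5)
The paper itself states this theorem without proof, deferring entirely to \cite{futaki04-1}, so there is no internal argument to compare with; judged on its own terms your proposal is essentially correct and follows the standard route (Chern--Weil transgression, Stokes, bidegree counting, and $i(X)\omega=-\barpartial u_X$ with the normalization \eqref{eq3}) that the cited source uses. Your packaging of \eqref{family} as the top-degree part of $\int_M\phi(\theta(X)+\Theta)\wedge(\omega+u_X)^{m-k+1}$ is correct and matches the equivariant-cohomology reading the paper alludes to for $f_\phi$; the order of steps (first $\theta$-independence at fixed $\omega$, then $\omega$-independence with the curvature of type $(1,1)$) is legitimate, and the checkable claims are right: the $\alpha(X)$- and $\iota_X\Theta$-terms do cancel in the $\theta$-variation, leaving only $\phi(\mathcal{L}_X\theta,\alpha,\Theta^{k-2})\wedge\omega^{m-k+1}$, which dies because $\mathcal{L}_X\theta$ and $\alpha$ are both of type $(1,0)$; in the $\omega$-variation $c_\varphi=0$ is correct, the $(X\varphi)$-terms cancel, and the residue really is, up to sign, $(m-k)\sqrt{-1}\int_M\phi(\Theta)\wedge\bigl(\partial\varphi\wedge\barpartial u_X-\partial u_X\wedge\barpartial\varphi\bigr)\wedge\omega^{m-k-1}$, i.e.\ the $(1,1)$-component of $-d(u_X\,d\varphi)$ wedged in, hence zero by Stokes (and when $k=m$ these terms carry the factor $m-k=0$, so the formally meaningless $\omega^{m-k-1}$ never occurs). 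Two blemishes, neither fatal: $\iota_X\Theta$ is not of type $(0,1)$ for a general type $(1,0)$-connection, since $\Theta$ may have a $(2,0)$-part — harmless in your $\theta$-step because those terms cancel rather than vanish by type, but the statement as written is wrong; and the ``further sequence of integrations by parts'' disposing of the $\mathcal{L}_X\theta$-term in the $\omega$-step is only asserted — it is cleaner to integrate $\partial\barpartial\varphi$ by parts with $\barpartial$ instead, producing $\phi(\iota_X\Theta,\Theta^{k-1})\wedge\partial\varphi\wedge\omega^{m-k}$, and then contract the identically vanishing $(2m+1)$-form $\phi(\Theta)\wedge\partial\varphi\wedge\omega^{m-k}$ with $X$ to convert it into exactly the $X\varphi$- and $\barpartial u_X$-terms needed for the cancellations you describe.
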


This family of integral invariants contains as a subfamily the obstructions for higher Chern forms to be harmonic
obtained by Bando \cite{bando83}.
Below is the detail about this.
Let $M$ be a compact K\"ahler manifold and 
$[\omega_0]$ be any K\"ahler class. For any 
K\"ahler form $\omega \in [\omega_0]$,
let $c_k(\omega)$ be its $k$-th Chern form and 
$Hc_k(\omega)$ be the harmonic part of $c_k(\omega)$.
Then there is a $(k-1, k-1)$-form $F_k$ such that
$$ c_k(\omega) - Hc_k(\omega) = \frac {\sqrt{-1}}{2\pi}\partial\barpartial F_k.$$
We define 
 $f_k : {\mathfrak h}(M) \to \bfC$ by
$$ f_k(X) = \int_M L_XF_k \wedge \omega^{m-k+1}.$$
Then one can show that $f_k$ is independent of the choice of $\omega \in [\omega_0]$ and therefore
$f_k$ becomes a Lie algebra homomorphism. If $c_k(\omega)$ becomes a harmonic form for some 
$\omega \in [\omega_0]$ then $f_k = 0$ for such an $\omega$. Hence we have $f_k = 0$ in such a case.
That is to say, $f_k$ is an obstruction for a K\"ahler class $[\omega_0]$ to
admit a K\"ahler form such that its $k$-th Chern form is a harmonic form.

In the case when $k= 1$, the first Chern form being harmonic is equivalent to the scalar curvature being constant.
This fact can be checked easily using the second Bianchi identity.
Thus $f_1$ is an obstruction for the K\"ahler class $[\omega_0]$ to admit a constant scalar curvature K\"ahler (cscK) metric.
In fact $f_1$ coincides with $f$ which we defined by  (\ref{W3}) in the last section.

Let us see that 
$${\mathcal F}_{c_k}(X) = (m-k+1) f_k(X)$$
when $P_G$ is the frame bundle of the holomorphic tangent bundle of  $M$ and $\theta$ is the Levi-Civita connection of
the K\"ahler form $\omega$.
As we will see below the second term of (\ref{family}) is  $0$ for $\phi = c_k$.
Next, $Hc_k(\omega) \wedge \omega^{m-k}$ is harmonic, and by the uniqueness of the harmonic form in each cohomology class
this must be a multiple of the volume form $\omega^m/m!$. Then by the normalization condition (\ref{eq3}), 
${\mathcal F}_{c_k}(X)$  coincides with $ (m-k+1)f_k(X)$.

One can prove that, for
$\phi = c_k$, the second term in (\ref{family}) is $0$ in the following way.
$\theta(X)$ is conjugate with $L(X) = L_X - \nabla_X$, but in the K\"ahler case the latter is equal to $\nabla X = 
\nabla \mathrm{grad}^{1,0}u$. Moreover in the calculation of 
\begin{eqnarray*}
{}&&\int_M c_p(\theta(X), \overbrace{\Theta, \cdots, \Theta}^{p-1}) \wedge \omega^{m-p+1}  \\
&& = \int_M c_m(\overbrace{\omega \otimes I, \cdots, \omega \otimes I}^{m-p}, \omega\otimes L(X), 
\overbrace{\Theta,
\cdots, \Theta}^{p-1})
\end{eqnarray*}
we take the determinant both in fiber coordinates and in the base coordinates. Because of this symmetry we have
\begin{eqnarray*}
\text{RHS} &=& \int_M c_m(\omega\otimes I, \cdots, \omega \otimes I, i\partial \barpartial u 
\otimes I,
\Theta, \cdots, \Theta) \\
&=& - \int_M \barpartial c_m(\omega\otimes I, \cdots, \omega \otimes I, i\partial u \otimes I,
\Theta, \cdots, \Theta)\\
&=& 0.
\end{eqnarray*}

Let us next see that, for $1 \le \ell \le m$, ${\mathcal F}_{Td^{\ell}}$'s are obstructions for asymptotic Chow semistability.
Here $Td^{\ell}$ is the $\ell$-th 
Todd polynomial. Geometric invariant theory says that to construct a moduli space with good properties such as Hausdorff property
or quasi-projectivity one has to discard unstable ones (\cite{mumford}). Let
$V$ be a vector space over $\bfC$, and $G$ be a subgroup of $SL(V)$.
We say that $x \in V$ is {\bf stable} if the orbit $Gx$ is closed and if the stabilizer subgroup at $x$ is finite.
We say that $x  \in V$ is {\bf semistable} if the closure of the orbit $Gx$ does not contain the origin 
$\bfo$.

Let $L \to M$ be an ample line bundle.
Put $V_k := H^0(M,L^k)^*$ and let 
$\Phi_{|L^k|} : M \to \bfP(V_k)$ be the Kodaira embedding determined by $L^k$.
Let  $d$ be the degree of $M$ in $\bfP(V_k)$.
A point in the product $\bfP(V_k^*) \times \cdots \times \bfP(V_k^*)$ of  $m+1$ 
copies of $\bfP(V_k^*)$ determines $m+1$ hyperplanes $H_1,\,\cdots\, ,H_{m+1}$
in $\bfP(V_k)$. The set of all $m+1$ hyperplanes $H_1, \,\cdots\, ,H_{m+1}$ such that 
$H_1 \cap \cdots \cap H_{m+1} \cap M$ is not empty defines a divisor in
$\bfP(V_k^*) \times \cdots \times \bfP(V_k^*)$. But since the degree of $M$ is $d$,
this divisor is defined by 
${\hat M}_k \in (Sym^d(V_k))^{\otimes m+1}$. Of course ${\hat M}_k$ is defined up to
a constant. The point 
$[{\hat M}_k] \in \bfP( (Sym^d(V_k))^{\otimes m+1})$ is called the Chow point of $(M, L^k)$.
$M$ is said to be Chow stable with respect to $L^k$ if ${\hat M}_k$ is stable under 
the action of $SL(V_k)$ on $(Sym^d(V_k))^{\otimes m+1}$. $M$ is said to be asymptotically
Chow stable with respect to $L$ if there exists a $k_0 > 0$ such that 
${\hat M}_k$ is stable for all $k \ge k_0$. Asymptotic Chow semistability is defined
similarly.
The stabilizer ${\hat G}_k \subset SL(V_k)$ of ${\hat M}_k$ is a finite covering of a
subgroup $G_k$ of the automorphism group $\mathrm{Aut}(M)$ of $M$. 
If we denote by $\mathrm{Aut}(M, L)$ the subgroup of $\mathrm{Aut}(M)$ consisting of
the elements which lift to an action on $L$, then $G_k$ is a subgroup of $\mathrm{Aut}(M, L)$.

\begin{theorem}[\cite{futaki04-1}]\ \ If $(M, L)$ is asymptotically Chow semistable, then
for $1 \le \ell \le m$ we have 
\begin{equation}
{\mathcal F}_{Td^{\ell}}(X) = 0.
\end{equation}
The case $\ell=1$ implies the vanishing of $f_1$.
\end{theorem}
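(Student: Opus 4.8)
\smallskip
\noindent\textbf{Proof proposal.} The plan is to test asymptotic Chow semistability against the one-parameter subgroups of $\mathrm{Aut}(M,L)$ and to read the vanishing off the Hilbert--Mumford weight, the decisive input being the equivariant Riemann--Roch theorem. Each functional $X\mapsto\mathcal F_\phi(X)$ is $\bfC$-linear (the two terms of (\ref{family}) are linear in $u_X$ and in $\theta(X)$ respectively), so it is enough to treat $X$ generating an algebraic one-parameter subgroup $\lambda\colon\bfC^\ast\to\mathrm{Aut}(M,L)$. Fix such a $\lambda$ and take $k$ large enough that $H^i(M,L^k)=0$ for $i>0$. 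Then $\lambda$ acts linearly on $V_k=H^0(M,L^k)^\ast$; write $w(X,k)$ for the total weight of this action, $N_k=\dim V_k$, and $d_k=\deg\Phi_{|L^k|}(M)$. Since $\lambda$ preserves $M\subset\bfP(V_k)$ it fixes the Chow point $[\hat M_k]$, hence acts on the vector $\hat M_k\in(Sym^{d_k}(V_k))^{\otimes(m+1)}$ by a character $t\mapsto t^{w_{\mathrm{Chow}}(k)}$.

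First I would extract the numerical consequence of semistability. The $SL(V_k)$-normalized one-parameter subgroup $t\mapsto\lambda(t)\,t^{-w(X,k)/N_k}$ acts on $\hat M_k$ with weight
\[
 w_{\mathrm{Chow}}(k)-\frac{d_k(m+1)}{N_k}\,w(X,k),
\]
because $(Sym^{d_k}(V_k))^{\otimes(m+1)}$ has $V_k$-degree $d_k(m+1)$. Were this weight nonzero, letting $t\to 0$ along this subgroup, or along its inverse, would carry $\hat M_k$ to the origin of $(Sym^{d_k}(V_k))^{\otimes(m+1)}$, contradicting the semistability of $\hat M_k$ under $SL(V_k)$. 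Hence for all $k\ge k_0$
\begin{equation}\label{chowwt}
 N_k\,w_{\mathrm{Chow}}(k)=d_k(m+1)\,w(X,k),
\end{equation}
and, both sides being polynomials in $k$, this is an identity of polynomials.

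The heart of the matter is to expand the terms of (\ref{chowwt}) by the equivariant index theorem and compare coefficients. Vanishing of higher cohomology gives $w(X,k)=\mp\,\chi^\lambda(M,L^k)=\mp\int_M\mathrm{ch}^\lambda(L^k)\,\mathrm{Td}^\lambda(TM)$, where the superscript denotes the equivariant extension built from $\lambda$ and the Hamiltonian $u_X$ normalized by (\ref{eq3}); its coefficient of $k^{m+1-\ell}$ is an integral over $M$ of $u_X$ against characteristic forms, together with a $\theta(X)$-term, of exactly the shape occurring in (\ref{family}). The quantities $N_k$ and $d_k$ are the usual Hilbert-polynomial integrals, and $w_{\mathrm{Chow}}(k)$ admits a parallel characteristic-number expansion, obtained by presenting the Chow form as a resultant and applying equivariant localization. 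Substituting these expansions into (\ref{chowwt}) and collecting the coefficient of $k^{2m+1-\ell}$ (the order at which the $Td^\ell$-term contributes), the relation becomes --- after using the Hilbert-polynomial expansions of $N_k$, $d_k$ and the lower-order part of the Chow-weight expansion --- a single linear equation whose left-hand side is, up to a nonzero universal constant, the combination (\ref{family}) defining $\mathcal F_{Td^\ell}(X)$; hence $\mathcal F_{Td^\ell}(X)=0$ for $1\le\ell\le m$. For $\ell=1$ one has $Td^1=\tfrac12 c_1$, so by linearity of $\phi\mapsto\mathcal F_\phi$ and the identity $\mathcal F_{c_1}(X)=m\,f_1(X)$ recorded above, $\mathcal F_{Td^1}(X)=\tfrac{m}{2}\,f_1(X)$, whence $f_1=0$.

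I expect the principal obstacle to lie in the last paragraph: establishing a clean equivariant Riemann--Roch formula for the Chow weight $w_{\mathrm{Chow}}(k)$ --- a delicate classical matter tied to the identification of the Chow form with the Cayley resultant and its behavior under $\bfC^\ast$ --- and then carrying out the Todd-class bookkeeping so that the designated coefficient of (\ref{chowwt}) collapses exactly to $\mathcal F_{Td^\ell}(X)$ up to a harmless constant. A secondary point, to be checked but harmless, is that the lift of $X$ to $L$ is ambiguous by an additive constant, which shifts $w(X,k)$ by $c\,kN_k$; this shift is absorbed by the normalization (\ref{eq3}) and leaves $\mathcal F_{Td^\ell}$ untouched.
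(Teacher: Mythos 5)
Your skeleton is the right one, and it is essentially the argument of the cited paper \cite{futaki04-1} (the present survey states the theorem without reproducing a proof): test semistability against a one-parameter subgroup of $\mathrm{Aut}(M,L)$, normalize it into $SL(V_k)$, note that a nonzero weight on the fixed Chow point would drive it to the origin and violate semistability, and convert the resulting identity, valid for all large $k$, into the vanishing of the invariants via equivariant Riemann--Roch; the $\ell=1$ deduction ($Td^1=\tfrac12 c_1$ and $\mathcal F_{c_1}=m\,f_1$) is correct.

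The genuine gap is the one you flag yourself: you never establish a formula for $w_{\mathrm{Chow}}(k)$, and without it the claimed ``collapse'' of the coefficient of $k^{2m+1-\ell}$ cannot be checked; moreover your description of that collapse is not quite what happens. The needed input is Mumford's classical weight formula (Stability of projective varieties, Prop.~2.11): for the embedding by $L^k$ the Chow weight of the unnormalized one-parameter subgroup is $(m+1)!$ times the coefficient of $r^{m+1}$ in the total weight of the induced action on $H^0(M,L^{kr})$; writing the weight on $H^0(M,L^k)$ as $b_0k^{m+1}+b_1k^m+\cdots$ and $N_k=a_0k^m+a_1k^{m-1}+\cdots$, this gives $w_{\mathrm{Chow}}(k)=\pm(m+1)!\,b_0k^{m+1}$, so there is no ``lower-order part of the Chow-weight expansion'' to use. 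Substituting into your weight identity, the coefficient of $k^{2m+1-\ell}$ is, up to a universal constant, $a_0b_\ell \mp a_\ell b_0$ --- two terms, not yet $\mathcal F_{Td^\ell}(X)$. It reduces to $a_0b_\ell$ only after the lift-normalization (your closing ``secondary point'' is therefore an essential step: (\ref{eq3}) is exactly what kills $b_0$, and the combination $a_0b_\ell-a_\ell b_0$ is what is lift-independent), and it becomes a nonzero multiple of $\mathcal F_{Td^\ell}(X)$ only after the equivariant Riemann--Roch computation identifying $b_\ell$ with $\mathcal F_{Td^\ell}(X)/(m+1-\ell)!$ for the normalized Hamiltonian --- which is the actual computational content of \cite{futaki04-1} and which you assert rather than carry out. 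Your proposed derivation of the Chow weight via resultants and equivariant localization is both unnecessary and unexecuted; Mumford's formula (equivalently the Knudsen--Mumford expansion of the Chow form) is the intended, much shorter source. A secondary point worth a sentence: linearity lets you pass from one-parameter subgroups only to the rational span of their generators, i.e.\ to the Lie algebra of a maximal torus of $\mathrm{Aut}(M,L)$; to conclude for general $X$ one needs the invariance properties of $\mathcal F_{Td^\ell}$ under the adjoint action. With these ingredients supplied, your argument closes and coincides with the original proof.
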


Note in passing that under the assumption that $\mathrm{Aut}(M, L)$ is discrete, 
Donaldson \cite{donaldson01} obtained the following
results. The K\"ahler form of the Fubini-Study metric of $\bfP(V_k)$ is denoted by $\omega_{FS}$.

\medskip

\begin{enumerate}
\item Suppose that $\mathrm{Aut}(M,L)$ is discrete and 
that $M$ is asymptotically Chow stable. If the sequence of K\"ahler forms 
$\omega_k := \frac{2\pi}k \Phi_{|L^k|}^{\ast}(\omega_{FS})$ belonging in
$c_1(L)$ converges in $C^{\infty}$ to $\omega_{\infty}$, 
then $\omega_{\infty}$ has constant scalar curvature.
\item Suppose that $\mathrm{Aut}(M,L)$ is discrete and that $\omega_{\infty} \in 2\pi c_1(L)$ 
has constant scalar curvature. Then $M$ is asymptotically Chow stable with respect to  $L$, 
and  $\omega_k$ converges in $C^{\infty}$ to $\omega_{\infty}$.
\item Suppose that $\mathrm{Aut}(M,L)$ is discrete. Then a K\"ahler metric of constant 
scalar curvature in $2\pi c_1(L)$ is unique.  \end{enumerate}

\medskip

The case where $\mathrm{Aut}(M,L)$ 
is not discrete is treated by T. Mabuchi in \cite{mabuchiICM}.

\section{K-stability}

In \cite{tian97} Tian defined the notion of K-stability for Fano manifolds
and proved that if a Fano manifold carries a K\"ahler-Einstein metric then
$M$ is weakly K-stable. Tian's K-stability considers the degenerations of 
$M$ to
normal varieties and uses a generalized version of the invariant $f_1$ 
defined by
Ding and Tian (\cite{dingtian92}). Note that this generalized invariant is only defined for normal varieties. 

Further Donaldson re-defined in \cite{donaldson02} the invariant $f_1$ 
for general
polarized varieties (or even projective schemes) and also re-defined 
the notion of K-stability for $(M, L)$. The new definition does not require $M$ to be
Fano nor the central fibers of  degenerations to be normal. We now briefly review 
Donaldson's definition of K-stability.

Let 
$\Lambda \to N$ be an ample line bundle over an $n$-dimensional projective 
scheme. We assume that a ${\mathbb C}^*$-action as bundle isomorphisms of 
$\Lambda$ covering the ${\mathbb C}^*$-action on $N$. 

For any positive integer $k$, there is an induced $\bfC^*$ action on
$W_k = H^0(N, \Lambda^k)$. Put
$d_k = \dim W_k$ and let $w_k$ be the weight of $\bfC^*$-action on 
$\wedge^{d_k}W_k$. 
For large $k$, $d_k$ and $w_k$ are polynomials in $k$ of degree $n$  and $n+1$ respectively
by the Riemann-Roch and the equivariant Riemann-Roch theorems. Therefore 
$w_k/kd_k$ is bounded from above as $k$ tends to infinity.
For sufficiently large $k$ we expand
$$ \frac{w_k}{kd_k} = F_0 + F_1k^{-1} + F_2k^{-2} + \cdots. $$

For an ample line bundle $L$ over a projective variety  $M$, a test configuration of
degree $r$ consists of the following.\\
(1)\ \ A family of schemes $\pi : {\mathcal M} \to \bfC$:\\
(2)\ \ $\bfC^*$-action on ${\mathcal M}$ covering the usual $\bfC^*$-action on $\bfC$:\\
(3)\ \ $\bfC^*$-equivariant line bundle $\mathcal{L} \to {\mathcal M}$ such that
\begin{itemize}
\item 
for $t \ne 0$ one has $M_t = \pi^{-1}(t) \cong M$ and
$(M_t, {\mathcal L}|_{M_t}) \cong (M, L^r)$,
\item 
$\chi(M_t, L^r_t) = \sum_{p=0}^n (-1)^p \dim 
H^p(M_t, L_t^r)$ does not depend on $t$, in particular for $r$ sufficiently large
$\dim H^0(M_t, L_t^r) = \dim H^0(M,L^r)$ for all $t \in \bfC$. Here we write $L^r_t$ for ${\mathcal L}|_{M_t}$
though $L$ may not exist for $t=0$.
\end{itemize}

$\bfC^*$-action induces a $\bfC^*$-action on the central fiber
$L_0 \to M_0 = \pi^{-1}(0)$. Moreover if 
$(M,L)$ admits a $\bfC^*$-action, then one obtains a test configuration
by taking the direct product $M \times \bfC$. This is called a product configuration.
A product configuration is called a trivial configuration if the action of $\bfC^*$ on $M$ is trivial.


\begin{definition}\ \ $(M,L)$ is said to be K-semistable (resp. stable) if
the $F_1$ of the central fiber $(M_0, L_0)$ is non-positive (negative)
for all non-trivial test configurations. $(M,L)$ is said to be
K-polystable if it is K-semistable and $F_1 = 0$ only
if the test configuration is product.
\end{definition}

\noindent
{\bf Conjecture}(\cite{donaldson02}) : A K\"ahler metric of constant scalar curvature will exist in the 
K\"ahler class $c_1(L)$  if and only if $(M,L)$ is K-polystable.

\bigskip

The following lemma shows that $F_1$ coincides with a positive multiple of $-f(X)$ if the central fiber is nonsingular.  
Here $X$ denotes the infinitesimal generator of the $\bfC^{ast}$-action.
The lemma and Tian's analysis on the behavior of Mabuchi K-energy motivates the Conjecture above.
Recall that $\Lambda$ was an ample line bundle with $\bfC^{\ast}$-action
over a projective
scheme $N$ and that $F_1$ was defined for $(N, \Lambda)$. Suppose that $N$ is 
nonsingular algebraic variety and take any K\"ahler form $\omega$ in $c_1(\Lambda)$. Denote by
$\rho$ and $\sigma$ the Ricci form and the scalar curvature of $\omega$ respectively.

\begin{lemma}[\cite{donaldson02}]\ \ If  $N$  is a nonsingular projective variety then 
$$F_1 = \frac{-1}{2 vol(N, \omega)}f_1(X)$$
where $X$ is the infinitesimal generator of the $\bfC^{\ast}$-action and $f_1$ is the
integral invariant defined in section 2.
\end{lemma}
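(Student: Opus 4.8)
The plan is to expand both $d_k=\dim H^0(N,\Lambda^k)$ and the weight $w_k$ asymptotically in $k$ by means of the equivariant Riemann--Roch theorem, read off the coefficients $F_0$ and $F_1$, and recognize $F_1$ as the analytic invariant $f_1(X)$. First I would pass from $H^0$ to an index: since $\Lambda$ is ample and $N$ is smooth, Kodaira vanishing gives $H^q(N,\Lambda^k)=0$ for $q\ge 1$ and $k$ large, so $d_k=\chi(N,\Lambda^k)$; and since the $\bfC^*$-action acts on the whole complex $R\Gamma(N,\Lambda^k)$, the weight $w_k$ of the action on $\wedge^{d_k}H^0(N,\Lambda^k)$ is the trace of the infinitesimal generator on $H^0(N,\Lambda^k)$, hence equals the equivariant holomorphic Euler characteristic for $k$ large. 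Restricting to the maximal compact $S^1\subset\bfC^*$, let $h$ be the real Hamiltonian of the circle action with respect to $\omega\in c_1(\Lambda)$ determined by the chosen lift of the action to $\Lambda$; with $s$ a generator of $H^*_{S^1}(\mathrm{pt})$ the equivariant index theorem gives
$$\chi^{S^1}(N,\Lambda^k)(s)=\int_N e^{k(\omega+sh)}\,\mathrm{Td}^{S^1}(TN),$$
where $\mathrm{Td}^{S^1}(TN)$ is built from the curvature $\Theta$ of $\omega$ and the zeroth order endomorphism $\mathcal L_X-\nabla_X$ of $TN$, and $d_k$ and $w_k$ are the coefficients of $s^0$ and $s^1$ respectively.

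Next I would extract the two leading coefficients of each polynomial in $k$. From $\mathrm{Td}_1(TN)=c_1(N)/2$ and $[\rho]=c_1(N)$ one gets $d_k=a_0k^n+a_1k^{n-1}+\cdots$ with $a_0=\int_N\omega^n/n!=\mathrm{vol}(N,\omega)$ and $a_1=\tfrac1{2(n-1)!}\int_N\rho\wedge\omega^{n-1}$. For $w_k=b_0k^{n+1}+b_1k^n+\cdots$ there are two sources: the $s^1$-part $kh\,e^{k\omega}$ of $\mathrm{ch}^{S^1}(\Lambda^k)$ gives $b_0=\tfrac1{n!}\int_N h\,\omega^n$ and a term $\tfrac1{2(n-1)!}\int_N h\,\rho\wedge\omega^{n-1}$ in $b_1$, while the degree-zero $s^1$-part of $\mathrm{Td}^{S^1}(TN)$ — a universal constant times $\mathrm{tr}_{T^{1,0}N}(\mathcal L_X-\nabla_X)$ — contributes to $b_1$ a multiple of $\int_N\mathrm{tr}_{T^{1,0}N}(\nabla X)\,\omega^n$, which vanishes because $\mathrm{tr}_{T^{1,0}N}(\nabla X)$ is the holomorphic divergence of $X$, a constant multiple of $\Delta u_X$, whose integral against $\omega^n$ is zero. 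Hence from $w_k/(kd_k)=F_0+F_1k^{-1}+\cdots$ one finds $F_0=b_0/a_0$ (the $\omega$-average of $h$) and
$$F_1=\frac{b_1a_0-b_0a_1}{a_0^2}=-\frac1{2(n-1)!\,\mathrm{vol}(N,\omega)}\int_N u_X\,\rho\wedge\omega^{n-1},$$
the additive constant in $h$ being exactly cancelled by the $-b_0a_1$ term, so that only the normalized potential survives; this potential is $u_X$ up to sign, because $X$ is the generator $z\,\partial_z$ of $\bfC^*$, for which $u_X=\sqrt{-1}h$ modulo a constant.

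Finally I would identify the remaining integral with $f_1$. Since $\rho\wedge\omega^{n-1}$ equals $\sigma\,\omega^n$ up to a universal constant, the last integral is a constant multiple of $\int_N u_X\,\sigma\,\omega^n$, which by (\ref{W3}) together with the equality $f_1=f$ recalled in section 3 is a constant multiple of $f_1(X)$; collecting the factors of $n!$, of $2\pi$ in the Chern--Weil representatives, and of $\sqrt{-1}$ coming from $X=\sqrt{-1}\,\mathrm{grad}^{1,0}u_X$ and from $u_X=\sqrt{-1}h$, one pins the constant down to $-1/(2\,\mathrm{vol}(N,\omega))$, giving the asserted formula.

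I expect this bookkeeping of constants to be the only genuine difficulty. Conceptually the whole proof is the equivariant index theorem, but one must push the normalizations consistently through both the $s$-expansion and the $k$-expansion, correctly combine the two contributions to $b_1$ (the $h$-term from $\mathrm{ch}^{S^1}(\Lambda^k)$ and the divergence term from $\mathrm{Td}^{S^1}(TN)$, the latter happening to drop out), and keep straight the convention relating the $\bfC^*$-generator $X$, the circle Hamiltonian $h$, and the normalized potential $u_X$ of (\ref{eq2})--(\ref{eq3}). The input that makes $d_k$ and $w_k$ polynomial in $k$ for large $k$ — namely Kodaira vanishing and equivariant Riemann--Roch — is precisely what confines the statement to a nonsingular $N$.
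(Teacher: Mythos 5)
Your proposal is correct and follows essentially the same route as the paper: expand $d_k$ and $w_k$ via Riemann--Roch and equivariant Riemann--Roch, note that the divergence (trace of $\nabla X$) term in the $k^n$-coefficient of $w_k$ integrates to zero, compute $F_1=(a_0b_1-a_1b_0)/a_0^2$, and identify the resulting integral $\int_N u_X(\sigma-\bar\sigma)\,\omega^n/n!$ with $-f_1(X)$ using (\ref{W3}). The only difference is that you spell out the equivariant index computation and the Hamiltonian/potential bookkeeping in more detail than the paper, which simply records the coefficients $a_0,a_1,b_0,b_1$.
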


\begin{proof}\ \ Let us denote by $n$ the complex dimension of $N$.
Expand $h^0(\Lambda^k)$ and $w(k)$ as
$$ h^0(\Lambda^k) = a_0k^n + a_1k^{n-1}+ \cdots,$$
$$ w(k) = b_0k^{n+1} + b_1k^n + \cdots.$$
Then by the Riemann-Roch and the equivariant Riemann-Roch formulae 
$$ a_0 = \frac 1{n!}\int_N c_1(\Lambda)^n = vol(N), $$
$$ a_1 = \frac 1{2(n-1)!} \int_N \rho \wedge c_1(\Lambda)^{n-1} = \frac 1{2n!} \int_N \sigma \omega^n, $$
$$ b_0 = \frac 1{(n+1)!}\int_N (n+1) u_X \omega^n, $$
$$ b_1 = \frac 1{n!} \int_N n u_X \omega^{n-1} \wedge \frac 12 c_1(N) + \frac 1{n!} \int_N 
\operatorname{div}X\ \omega^n .$$
The last term of the previous integral is zero because of the divergence formula. Thus
$$ \frac {w(k)}{kh^0(k)} = \frac{b_0}{a_0}(1 + (\frac{b_1}{b_0} - \frac{a_1}{a_0})k^{-1} + \cdots ) $$
from which we have
\begin{eqnarray*}
 F_1 &=& 
 \frac{b_0}{a_0}(\frac{b_1}{b_0} - \frac{a_1}{a_0}) = \frac 1{a_0^2}(a_0b_1 - a_1b_0)\\
 &=& \frac 1{2vol(N)}\int_N u_X(\sigma - \frac 1{vol(N)}\int_N \sigma \frac{\omega^n}{n!})\frac{\omega^n}{n!}\\
 &=&  \frac 1{2vol(N)}\int_N u_X \Delta F \frac{\omega^n}{n!}
 \ =\   \frac{ -1}{2vol(N)}\int_N XF \frac{\omega^n}{n!}\\
 &=&  \frac{ -1}{2vol(N)}f_1(X).
\end{eqnarray*}
\end{proof}

\section{Existence and uniqueness of Sasaki-Einstein metrics}

The notion of Sasaki manifold was introduced by Sasaki and Hatakeyama
in \cite{sasaki}. Sasaki manifolds are a kind of odd dimensional analog of 
K\"ahler manifolds. Because of the similarity to
K\"ahler manifolds,
Sasaki manifolds had not been in the spotlight for a long time.
Especially, it had been believed that the existence problem of
Sasaki-Einstein metrics is reduced to that of Ricci positive K\"ahler-Einstein
metrics on Fano orbifolds.

However the situation changed
drastically
in the late 1990's. It was pointed
out by 
physicists
that Sasaki-Einstein manifolds play an important role in AdS/CFT
correspondence. 
In fact 
Gauntlett, Martelli, Sparks and Waldram
gave infinitely many examples of Sasaki-Einstein manifolds which
are not obtained as the total spaces of $S^1$-orbibundles of locally cyclic
K\"ahler-Einstein orbifolds, \cite{gmsw}.
We now recognize that
the set of Sasaki-Einstein manifolds is strictly larger than
that of locally cyclic Ricci positive K\"ahler-Einstein orbifolds.
In fact, in the joint works of the authors, Guofang Wang and Koji Cho,
\cite{FOW}, \cite{CFO}, we solve the existence and uniqueness problem
of toric Sasaki-Einstein manifolds completely. Then we see that
there are
much
more toric Sasaki-Einstein manifolds than
toric K\"ahler-Einstein orbifolds.
In the present chapter, we
will
explain such existence and uniqueness
results of Sasaki-Einstein manifolds.

\subsection{Sasaki manifolds}

First of all, we define Sasaki manifolds
as follows.
Let $(S,g)$ be a Riemannian manifold. We denote its Riemannian cone
$(\mathbb{R}_+\times S,dr^2+r^2g)$ by $(C(S),\bar{g})$.

\begin{definition}
A Riemannian manifold $(S,g)$ is said to be a {\bf Sasaki manifold} if 
the Riemannian cone $(C(S),\overline{g})$ is K\"ahler.
\end{definition}
The dimension of Sasaki manifold $(S,g)$ is
odd, and
$(S,g)$ is isometric to the submanifold $\{r = 1\} = \{1\} \times S 
\subset (C(S),\overline{g})$.
When a Sasaki manifold $(S,g)$ is given, there are some important objects
associated with it; let $J$ be a complex structure on 
$C(S)$ such that
$(C(S),\,J,\,\bar{g})$ 
is K\"ahler.
Then we get the vector field $\Tilde{\xi}$ and the $1$-form $\Tilde{\eta}$
on $C(S)$ defined as
$$\Tilde{\xi}=Jr\frac{\partial }{\partial r},\ \ 
\Tilde{\eta}=\frac{1}{r^2}\bar{g}(\Tilde{\xi},\cdot)
=\sqrt{-1}(\bar{\partial}-\partial)\log r.$$
It is easily seen that the restrictions $\xi=\Tilde{\xi}_{|S}$ and
$\eta=\Tilde{\eta}_{|S}$ to $\{r=1\}\simeq S$ give a vector field and a
$1$-form on $S$. These are usually called the Reeb vector field
and the contact form respectively in Contact geometry context.
By the abuse of terminology we call $\Tilde{\xi}$ and $\Tilde{\eta}$ 
the {\bf Reeb vector field} and
the {\bf contact form} on $C(S)$ respectively.
The Reeb vector field $\Tilde{\xi}$ is a Killing vector field on
$(C(S),\bar{g})$ with the length $\bar{g}(\Tilde{\xi}, \Tilde{\xi})^{1/2}=r$. 
The complexification $\Tilde{\xi}-\sqrt{-1}J\Tilde{\xi}$
of the Reeb vector field is holomorphic on $(C(S),J)$.

The K\"ahler form $\omega$ of
$(C(S),\,J,\,\bar{g})$ 
is
$$
\omega=\frac12 d(r^2\Tilde{\eta})=\frac{\sqrt{-1}}{2}\partial
\bar{\partial}r^2.$$

\begin{example}
A typical example of Sasaki manifold is the odd dimensional unit sphere
$S^{2m+1}(1)$. The Riemannian cone of $S^{2m+1}(1)$ is 
$(\mathbb{C}^{m+1}\setminus\{0\},\langle\ ,\ \rangle)$, where $\langle\ ,
\ \rangle$ is the standard inner product.
The Reeb vector field is given by
$$\Tilde{\xi}_0=\sum_{j=0}^m(x^j\frac{\partial }{\partial y^j}-
y^j\frac{\partial }{\partial x^j})
=\sqrt{-1}\sum_{j=0}^m (z^j\frac{\partial}{\partial z^j}
-\bar{z}^j\frac{\partial}{\partial\bar{z}^j}).$$
\end{example}

Now it is obvious that there is a close
relationship
between $(2m+1)$-dimensional
Sasaki manifolds and the Riemannian cones,
which are K\"ahler manifolds
of
complex dimension $m+1$. It is also important to notice that
there is a complex $m$ dimensional K\"ahler structure on the
transverse direction
of the foliation defined by the vector field $\xi$.
Let $(S,g)$ be a Sasaki manifold and $\Tilde{\xi}$ the Reeb vector field on
$C(S)$. Then $\xi=\Tilde{\xi}_{\{r=1\}}$ is a vector field on $S$ and
$g(\xi,\xi)=1$. Hence $\xi$ defines a
one dimensional foliation 
$\mathcal{F}_\xi$ on $S$. We call $\mathcal{F}_\xi$ the Reeb foliation.
On the other hand the holomorphic vector field $\Tilde{\xi}-\sqrt{-1}J
\Tilde{\xi}$ generates a holomorphic flow on $C(S)$.
The local orbits of this flow defines a transversely holomorphic structure,
we denote it by $\Phi$,
which we denote by $\Phi$, on
the Reeb foliation $\mathcal{F}_\xi$ in the following sense.
There are an open covering $\{U_\alpha\}_{\alpha\in A}$ of $S$ and
submersions $\pi_\alpha:U_\alpha\to V_{\alpha}\subset \mathbb{C}^m$
such that when $U_\alpha\cap U_\beta\not=\emptyset$
$$\pi_\alpha\circ \pi_\beta^{-1}:\pi_\beta(U_\alpha\cap U_\beta)\to
\pi_\alpha(U_\alpha\cap U_\beta)$$
is biholomorphic. On each $V_\alpha$ we can give a K\"ahler structure
as follows. Let $D=\Ker \ \eta\subset TS$. 
There is a canonical isomorphism
$d\pi_\alpha:D_p\to T_{\pi_\alpha(p)}V_\alpha$
for any $p\in U_\alpha$.
Since $\xi$ is a Killing vector field on $(S,g)$, the restriction $g_{|D}$
of the Sasaki metric $g$ to $D$ gives a well-defined Hermitian metric
$g_\alpha^T$ on $V_\alpha$. This Hermitian structure is in fact K\"ahler.
The fundamental $2$-form $\omega_\alpha^T$ of $g_\alpha^T$ is the same as
the restriction of $d\eta/2$ to $U_\alpha$.
Hence
we see that $\pi_\alpha\circ \pi_\beta^{-1}:\pi_\beta(U_\alpha\cap U_\beta)
\to\pi_\alpha(U_\alpha\cap U_\beta)$ gives an isometry of K\"ahler manifolds.
Therefore, the Reeb foliation $\mathcal{F}_\xi$ is a transversely K\"ahler
foliation.

From now on we denote by $(S,g;\xi,\eta,\Phi)$ a Sasaki manifold
when we
need to specify
the Reeb vector field, the contact form and
the transverse holomorphic structure.

\begin{example}

The restriction of the Reeb vector field $\xi_0=\Tilde{\xi}_0\, 
_{|S^{2m+1}(1)}$ to $S^{2m+1}(1)$ generates the $S^1$-action
$(z^0,\cdots,z^m)\mapsto (e^{i\theta}z^0,\cdots,e^{i\theta}z^m)$.
Hence the transverse K\"ahler structure of the Reeb foliation $\mathcal{F}
_{\xi_0}$ is identified with the orbit space of the $S^1$-action, that is 
$(\mathbb{C}P^m,g_{FS})$,
where $g_{FS}$ is the Fubini-Study metric.
In general
when the Reeb vector field generates a (locally) free $S^1$-action,
then we call the Sasaki manifold {\bf (quasi-)regular}.
The transverse K\"ahler structure of the Reeb foliation of 
a (quasi-)regular Sasaki manifold is the K\"ahler manifold (locally cyclic
K\"ahler orbifold) obtained as the quotient space of the $S^1$-action.
Conversely when we have a K\"ahler manifold (locally cyclic K\"ahler
orbifold) $(M,\omega)$ with an
integral K\"ahler class $[\omega]$,
we can construct a (quasi-)regular Sasaki metric on $S(L)$ whose transverse
K\"ahler structure is $(M,\omega)$, where $L$ is the complex line 
(orbi)bundle on $M$
with $c_1(L)=-[\omega]$ and $S(L)$ is the associated $U(1)$-bundle.
See \cite{blair}, \cite{boyergalicki} for the detail.

On the other hand we call a Sasaki manifold {\bf irregular}
if the Reeb foliation has a non-closed leaf.
The transverse K\"ahler structure of an irregular Sasaki manifold
cannot be realized as a K\"ahler orbifold.
\end{example}

The Einstein condition of a Sasaki manifold $(S,g)$ is translated into 
Einstein conditions of the Riemannian cone $(C(S),\bar{g})$ or
the transverse K\"ahler structure as follows.

\begin{proposition}
Let $(S,g)$ be a $(2m+1)$ dimensional Sasaki manifold.
Then the following three conditions are equivalent:

1. $g$ is Einstein. Then $Ric_g=2mg$, where $Ric_g$ is the Ricci curvature of
$g$.

2. The Riemannian cone $(C(S),\bar{g})$ is a Ricci-flat K\"ahler manifold. 

3. The transverse K\"ahler metric $g^T$ satisfies $Ric^T=(2m+2)g^T$,
where $Ric_{g^T}$ is the Ricci curvature of $g^T$.
\end{proposition}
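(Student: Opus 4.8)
The plan is to establish the three equivalences by passing through the cone metric $\bar g$ as a pivot, since the relation between $g$ on $S$ and $\bar g$ on $C(S)$ is explicit ($\bar g = dr^2 + r^2 g$) and the relation between $\bar g$ and the transverse K\"ahler metric $g^T$ can be read off from the foliation picture developed above. First I would recall the standard cone curvature formulae: for a warped product cone $(C(S),\bar g)$ over $(S,g)$, one computes $\mathrm{Ric}_{\bar g}$ in terms of $\mathrm{Ric}_g$. The key identity is that $\mathrm{Ric}_{\bar g}$ annihilates the radial direction $r\partial/\partial r$, and on vectors tangent to the $\{r=1\}$ slice one gets $\mathrm{Ric}_{\bar g}(X,Y) = \mathrm{Ric}_g(X,Y) - (2m)\,g(X,Y)$, where $\dim S = 2m+1$. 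Hence $\bar g$ is Ricci-flat if and only if $\mathrm{Ric}_g = 2m\,g$, which is precisely the equivalence of conditions 1 and 2 (and simultaneously pins down the Einstein constant to be $2m$, explaining why the only possible Einstein constant is positive).

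Next I would treat the equivalence of 2 and 3. Here the point is that the K\"ahler cone $(C(S),J,\bar g)$ fibers (locally, via the Reeb foliation together with the radial direction) over the transverse K\"ahler base, and the K\"ahler form was given explicitly as $\omega = \tfrac12 d(r^2\tilde\eta) = \tfrac{\sqrt{-1}}2\partial\bar\partial r^2$. Using $r$ as (half the square of) a K\"ahler potential, one computes the Ricci form $\rho_{\bar g}$ of the cone. The natural coordinates are $(r,\text{transverse coordinates})$ together with the $S^1$-type fiber, and $\det \bar g$ factors as a power of $r$ times the transverse volume density; differentiating $\log\det\bar g$ gives $\rho_{\bar g} = \rho^T - (2m+2)\,\omega^T + (\text{terms involving } dr)$, where $\rho^T$ is the transverse Ricci form. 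The vanishing of $\rho_{\bar g}$ forces both the radial terms to vanish (automatic from the cone structure) and $\rho^T = (2m+2)\,\omega^T$, i.e. $\mathrm{Ric}^T = (2m+2)\,g^T$. Running the computation in the other direction closes the loop.

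I expect the main obstacle to be the bookkeeping in the second equivalence: one must be careful about what "$\mathrm{Ric}^T$" means for a transverse structure (it is the Ricci form of the local K\"ahler quotients $V_\alpha$, which patches to a global transverse form since the transition maps are biholomorphic isometries), and one must correctly relate the transverse complex dimension $m$ to the cone complex dimension $m+1$ so that the shift constant comes out as $2m+2$ rather than $2m$ or $2(m+1)$. The cleanest way to handle this is to choose, locally, a foliated coordinate system $(w,z^1,\dots,z^m)$ on $C(S)$ in which $J\tilde\xi$ and $\tilde\xi$ span the $w$-directions and $z^i$ descend to transverse holomorphic coordinates, write $r^2$ as a function adapted to these, and simply compute $-\partial\bar\partial\log\det(\partial\bar\partial r^2)$. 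Everything else — the radial curvature computation for 1 $\Leftrightarrow$ 2 and the sign/constant matching — is routine once the adapted coordinates are fixed. I would also remark that the equivalence 1 $\Leftrightarrow$ 3 can be stated directly: it says a Sasaki metric is Einstein precisely when its transverse K\"ahler metric is K\"ahler--Einstein with the specific positive Einstein constant $2m+2$, which is the fact that makes the deformation-of-Reeb-field strategy for producing Sasaki--Einstein metrics meaningful.
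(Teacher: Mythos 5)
The paper states this proposition without proof (it is a standard fact, cf. \cite{boyergalicki}, \cite{FOW}), so there is nothing to compare line by line; your cone-based argument is exactly the standard one and is essentially correct: the cone formulas give $\mathrm{Ric}_{\bar g}(\partial_r,\cdot)=0$ and $\mathrm{Ric}_{\bar g}(X,Y)=\mathrm{Ric}_g(X,Y)-2m\,g(X,Y)$ on the slice, so Ricci-flatness of $\bar g$ is equivalent to $\mathrm{Ric}_g=2m\,g$, and the potential computation $\rho_{\bar g}=-\sqrt{-1}\,\partial\bar\partial\log\det(\partial\bar\partial r^2)=\rho^T-(2m+2)\,\omega^T$ (a basic form, so no residual $dr$-terms survive) gives the equivalence with $\mathrm{Ric}^T=(2m+2)g^T$.

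The one point you should patch is the clause ``$g$ Einstein $\Rightarrow$ $\mathrm{Ric}_g=2m\,g$'' in condition 1: your cone identity pins the constant down only \emph{after} assuming $\bar g$ is Ricci-flat, so as written the implication $1\Rightarrow 2$ is slightly circular. The missing ingredient is that on \emph{any} Sasaki manifold $\mathrm{Ric}_g(\xi,\xi)=2m$ identically (e.g.\ because $\xi$ is a unit Killing field with $R(X,\xi)\xi=X$ for $X\perp\xi$, or because the Ricci tensor of the K\"ahler cone is $J$-invariant and $\mathrm{Ric}_{\bar g}(\partial_r,\partial_r)=0$ forces $\mathrm{Ric}_{\bar g}(\tilde\xi,\tilde\xi)=0$); hence an Einstein Sasaki metric automatically has Einstein constant $2m$, and with that remark your two equivalences close correctly.
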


\begin{example}
Using the correspondence in Example $5.3$, 
there is one-to-one correspondence between
quasi-regular Sasaki-Einstein manifolds and
Ricci positive locally cyclic K\"ahler-Einstein orbifolds.
Many examples of quasi-regular Sasaki-Einstein manifolds are obtained by
Boyer, Galicki and their collaborators. 
Their results can be found in
\cite{boyergalicki}.

\end{example}

\subsection{Integral invariants}

In Chapter $2$, we saw that there is an obstruction to the existence of
K\"ahler metric of constant scalar curvature defined by $(4)$.
Especially when the K\"ahler class $[\omega]$ equals the first Chern class
of the manifold $c_1(M)$, it gives an obstruction to the existence of
Ricci positive
K\"ahler-Einstein metric.
On the other hand, by Proposition $5.4$, a $(2m+1)$ dimensional Sasaki manifold
is Einstein if and only if the corresponding transverse K\"ahler metric is
Einstein with the Einstein constant $2m+2$.
In this section we would like to define an integral invariant which is an obstruction
for a transverse holomorphic structure to admit a transverse K\"ahler-Einstein
metric.

Let $(S,g;\xi,\eta,\Phi)$ be a compact Sasaki manifold. 
As in the K\"ahler case, it is necessary for the existence of
a Sasaki-Einstein metric on $S$ 
that the transverse holomorphic
structure $(S,\Phi)$ has transversely positive first Chern class.
We would see that
there is further necessary condition, Proposition $5.9$ below,
for the existence of Sasaki-Einstein
metric.

\begin{definition}
A $p$-form $\alpha$ is called {\bf basic} if
$$i(\xi)\alpha=0,\ \ L_\xi\alpha=0,$$
where $\xi$ is the Reeb vector field on $S$, $i$ is the interior product
and $L_\xi$ is the Lie derivative with respect to $\xi$.
When we take into consideration the transversely holomorphic structure
$(\{U_a\}_{a\in A},\pi_a:U_a\to V_a)$ on $S$,
a $(p+q)$-form $\alpha$ is called a basic $(p,q)$-form if $\alpha$ is basic and
there is a $(p,q)$-form $\alpha_a$ on $V_a$ such that
$$\alpha_{|U_a}=\pi_a^*\alpha_a$$
for each $a\in A$.
Let $\Lambda_B^p$ (resp. $\Lambda_B^{p,q}$) be the sheaf of germs of
basic $p$-forms (resp. basic $(p,q)$-forms) and $\Omega_B^p=\Gamma
(S,\Lambda_B^p)$ (resp. $\Omega_B^{p,q}=
\Gamma(S,\Lambda_B^{p,q})$) the set of all global sections of
$\Lambda_B^p$ (resp. $\Lambda_B^{p,q}$).
\end{definition}

It is easy to see that $d\alpha$ is basic if $\alpha$ is basic.
We set $d_B=d_{|\Omega_B^*}$. Then $d_B^2=0$. Hence
we get a complex
$(\Omega_B^*,d_B)$
and call it the basic de Rham complex. We have the well-defined operators
$$\partial _B:\Omega_B^{p,q}\to \Omega_B^{p+1,q},\ \ 
\bar{\partial}_B:\Omega_B^{p,q}\to \Omega_B^{p,q+1}$$
which satisfy $d_B=\partial _B+\bar{\partial}_B$. The square of 
$\bar{\partial}_B$ vanishes and then we have a complex
$(\Omega_B^{p,*},\bar{\partial}_B)$, the basic Dolbeault complex.

\begin{example}
As we saw in the previous subsection, the transverse K\"ahler form
$\{\omega_\alpha^T\}_{\alpha\in A}$ of a Sasaki manifold $(S,g;\xi,\eta,\Phi)$
satisfies $\pi_\alpha^*\omega_\alpha^T=d\eta/2_{|U_\alpha}$.
Thus they are glued together and give a $d_B$-closed basic $(1,1)$-form
$d\eta/2$ on $S$.
We also call $\omega^T:=d\eta/2$ the transverse K\"ahler form.
Similarly we see that the Ricci forms of the transverse K\"ahler metric
$\{\rho_\alpha^T\}_{\alpha\in A}$,
$\rho_\alpha^T=-\sqrt{-1}\partial \bar{\partial}\log \det (g_\alpha^T)$, are
glued together and give a $d_B$-closed basic $(1,1)$-form
$\rho^T$ on $S$. $\rho^T$ is called the transverse Ricci form.
\end{example}

Of course, the transverse Ricci form $\rho^T$ depends on Sasaki metrics $g$.
Nevertheless its basic de Rham cohomology class is invariant
under some deformations of Sasaki structure defined as follows.

\begin{proposition}
Let $(S,g;\xi,\eta,\Phi)$ be a Sasaki manifold and $\varphi$ a basic 
function on $S$ such that $d\eta+2\sqrt{-1}\partial_B\bar{\partial}_B
\varphi$ is positive on the transverse of $\xi$.
Then we have a new Sasaki manifold $(S,g_\varphi;\xi,\eta_\varphi,\Phi)$,
where $\eta_\varphi=\eta+\sqrt{-1}(\bar{\partial}_B-\partial_B)\varphi$,
$\omega_\varphi^T=d\eta/2+\sqrt{-1}\partial_B\bar{\partial}_B\varphi$.
\end{proposition}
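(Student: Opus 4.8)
The plan is to verify directly that the deformed data $(\xi,\eta_\varphi,\Phi)$ again arises from a K\"ahler cone metric, and that the transverse K\"ahler form is as claimed. First I would check that $\eta_\varphi = \eta + \sqrt{-1}(\bar\partial_B - \partial_B)\varphi$ is a contact form with the same Reeb vector field $\xi$. Since $\varphi$ is basic, the form $\sqrt{-1}(\bar\partial_B - \partial_B)\varphi$ is basic, so $i(\xi)\eta_\varphi = i(\xi)\eta = 1$; moreover $d\eta_\varphi = d\eta + 2\sqrt{-1}\partial_B\bar\partial_B\varphi$, and by hypothesis this is positive on $D = \ker\eta$, transverse to $\xi$. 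Hence $\eta_\varphi \wedge (d\eta_\varphi)^m$ is a volume form and $\xi$ is the Reeb field of $\eta_\varphi$ (it satisfies $i(\xi)\eta_\varphi = 1$ and $i(\xi)d\eta_\varphi = 0$, the latter because $L_\xi\varphi = 0$ and $i(\xi)$ kills basic forms). Thus the Reeb foliation $\mathcal F_\xi$ and the transverse holomorphic structure $\Phi$ are unchanged, which is why the same $\xi$ and $\Phi$ appear in the conclusion.

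Next I would produce the new Sasaki metric. On the cone $C(S) = \mathbb R_+ \times S$ the original K\"ahler form is $\omega = \tfrac12 d(r^2\tilde\eta) = \tfrac{\sqrt{-1}}{2}\partial\bar\partial r^2$, and the dilation field $r\,\partial/\partial r$ together with the cone structure determines everything. The idea is to set $\tilde r_\varphi = r\,e^{\varphi}$ (viewing $\varphi$ as a function on $C(S)$ pulled back from $S$, i.e.\ homogeneous of degree $0$ in $r$) and define the candidate new cone K\"ahler form $\omega_\varphi = \tfrac{\sqrt{-1}}{2}\partial\bar\partial \tilde r_\varphi^2$. A direct computation with $\tilde r_\varphi^2 = r^2 e^{2\varphi}$ gives
\begin{equation}
\partial\bar\partial(r^2 e^{2\varphi}) = e^{2\varphi}\bigl(\partial\bar\partial r^2 + 2 r^2(\partial\varphi + \tfrac{\partial r^2}{r^2})\wedge(\bar\partial\varphi) + 2 r^2 \partial\bar\partial\varphi + \cdots\bigr),
\end{equation}
and using $\sqrt{-1}(\bar\partial - \partial)\log r = \tilde\eta$ together with the basic property of $\varphi$ one recovers exactly $\omega_\varphi = \tfrac12 d(\tilde r_\varphi^2\,\tilde\eta_\varphi)$ with $\tilde\eta_\varphi = \tilde\eta + \sqrt{-1}(\bar\partial_B - \partial_B)\varphi$, whose restriction to $\{r=1\}$ is $\eta_\varphi$. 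The transverse K\"ahler form is then read off as $\omega_\varphi^T = \tfrac12 d\eta_\varphi|_D = d\eta/2 + \sqrt{-1}\partial_B\bar\partial_B\varphi$, matching the statement. Positivity of $\bar g_\varphi$ as a K\"ahler metric on the cone follows from the assumed transverse positivity of $d\eta + 2\sqrt{-1}\partial_B\bar\partial_B\varphi$ plus positivity in the $(r,\xi)$-directions, which is automatic from the cone form $\tfrac12 d(\tilde r_\varphi^2 \tilde\eta_\varphi)$.

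The main obstacle is bookkeeping rather than conceptual: one must carefully separate the basic (transverse) directions from the $r$- and $\xi$-directions when expanding $\partial\bar\partial(r^2 e^{2\varphi})$, keeping track of the identity $\tilde\eta = \sqrt{-1}(\bar\partial - \partial)\log r$ and the fact that $\partial_B, \bar\partial_B$ acting on basic forms agree with $\partial,\bar\partial$ up to the $dr$- and $\eta$-components which must cancel. The cleanest route is probably to work on $C(S)$ throughout, show the new cone metric is K\"ahler (it is manifestly a cone metric with radial function $\tilde r_\varphi$, and $\partial\bar\partial\tilde r_\varphi^2 > 0$ from the hypothesis), and then restrict to $\{\tilde r_\varphi = 1\}$ — or equivalently note that the contact structure it induces on $S$ has contact form $\eta_\varphi$ — to conclude that $(S, g_\varphi; \xi, \eta_\varphi, \Phi)$ is again Sasaki. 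Finally, verifying $L_\xi(\eta_\varphi) = 0$ and that $\Phi$ is genuinely preserved (not merely the foliation) amounts to checking that $\xi - \sqrt{-1}J_\varphi\xi$ is holomorphic for the new cone complex structure, but since the complex structure on the cone is unchanged — only the radial function and hence the metric changes — this is immediate.
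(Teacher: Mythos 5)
Your proposal is correct and is essentially the standard argument: the paper states this proposition without proof, but your construction — pass to the cone, replace the radial function by $r_\varphi=re^{\varphi}$ with $\varphi$ basic, take $\frac{\sqrt{-1}}{2}\partial\bar\partial r_\varphi^2$ as the new K\"ahler cone form, observe that $J$, $\tilde\xi$ and hence the transverse holomorphic structure are untouched, and read off $\eta_\varphi$ and $\omega^T_\varphi$ on the slice — is exactly the type~II deformation used in the paper's references \cite{FOW}, \cite{MSY2} and fits the cone framework of Section 5.1. The only blemish is that your displayed expansion of $\partial\bar\partial(r^2e^{2\varphi})$ is garbled (wrong/missing cross terms), but it is not load-bearing, since the identity $\frac{\sqrt{-1}}{2}\partial\bar\partial r_\varphi^2=\frac12 d\bigl(r_\varphi^2\,\sqrt{-1}(\bar\partial-\partial)\log r_\varphi\bigr)$ you actually use follows at once from $d=\partial+\bar\partial$ and $\sqrt{-1}(\bar\partial-\partial)r_\varphi^2=2r_\varphi^2\,\sqrt{-1}(\bar\partial-\partial)\log r_\varphi$.
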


The basic de Rham class of the transverse K\"ahler form $\omega_\varphi^T$
is invariant under such deformation of Sasaki structures.
Similarly the basic de Rham class of the transverse Ricci form
$[\rho^T/2\pi]$ is an invariant of transverse holomorphic structure.
We call $[\rho^T/2\pi]$ the {\bf basic first Chern class} and denote by
$c_1^B(S,\Phi)$. A Sasaki manifold $(S,g;\xi,\eta,\Phi)$ is said to be
transversely positive if the basic first Chern class is represented by
a transversely positive $d_B$-closed $(1,1)$-form.
Sasaki-Einstein manifold is transversely positive by Proposition $5.4$.
However we should
note that the basic first Chern class of a transversely
positive Sasaki manifold is not always represented by
a transverse K\"ahler form. In fact we see the following proposition.

\begin{proposition}[\cite{bgo}]
The basic first Chern class is represented by $\tau d\eta$ for some
constant $\tau$ if and only if $c_1(D)=0$. Here $D=\Ker \ \eta$.
\end{proposition}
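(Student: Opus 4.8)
The plan is to compare basic de Rham cohomology with ordinary de Rham cohomology of $S$. Let $\iota\colon H^2_B(S)\to H^2_{dR}(S;\bfR)$ be the map induced by the inclusion of basic forms into all forms. Two observations drive the argument. First, $\iota([d\eta]_B)=0$, simply because $d\eta$ is the exterior derivative of the globally defined $1$-form $\eta$ on $S$. Second, $\iota(c_1^B(S,\Phi))=c_1(D)$ in $H^2_{dR}(S;\bfR)$, where $c_1(D)$ is the (real) first Chern class of the complex bundle $(D,\Phi|_D)$: the transverse Levi-Civita connection of the transverse K\"ahler metric, together with the Bott connection in the $\xi$-direction, defines a genuine Hermitian connection on $D\to S$ whose curvature is basic and whose trace is $\rho^T$, so by Chern--Weil $c_1(D)=[\rho^T/2\pi]_{dR}=\iota([\rho^T/2\pi]_B)=\iota(c_1^B(S,\Phi))$. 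Granting these, the ``only if'' direction is immediate: if $c_1^B(S,\Phi)=\tau[d\eta]_B$, then applying $\iota$ gives $c_1(D)=\tau\cdot 0=0$.

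For the converse it suffices to show that the kernel of $\iota$ in degree two equals $\bfR\cdot[d\eta]_B$, since then $c_1(D)=0$ forces $c_1^B(S,\Phi)\in\ker\iota=\bfR[d\eta]_B$. This can be quoted from the Gysin-type long exact sequence for basic cohomology of a $K$-contact manifold (see \cite{boyergalicki}), whose relevant segment is $H^0_B(S)\to H^2_B(S)\to H^2_{dR}(S;\bfR)$ with first map $\cup[d\eta]_B$, second map $\iota$, and $H^0_B(S)=\bfR$; but I would rather argue directly. Let $\psi$ be a $d_B$-closed basic $2$-form with $[\psi]_{dR}=0$, say $\psi=d\beta$ with $\beta\in\Omega^1(S)$. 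Since $\xi$ is Killing and $S$ is compact, the closure $T$ of the flow of $\xi$ in $\mathrm{Isom}(S,g)$ is a torus preserving the whole Sasaki structure; averaging $\beta$ over $T$ yields an $L_\xi$-invariant $1$-form $\bar\beta$ with $d\bar\beta=\psi$ (here $\psi$ is $T$-invariant, being basic and $L_\xi$-invariant, hence fixed by the closure of the $\xi$-flow). Put $f=i(\xi)\bar\beta$ and $\gamma=\bar\beta-f\,\eta$. Then $f$ is a basic function ($\xi f=0$), and $\gamma$ is a basic $1$-form: $i(\xi)\gamma=f-f=0$, and $L_\xi\gamma=0$ using $L_\xi\bar\beta=0$, $\xi f=0$, and $L_\xi\eta=0$. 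Expanding, $\psi=d\bar\beta=d\gamma+df\wedge\eta+f\,d\eta$, hence $\psi-f\,d\eta-d\gamma=df\wedge\eta$. The left-hand side is basic and so annihilated by $i(\xi)$, whereas $i(\xi)(df\wedge\eta)=-df$; therefore $df=0$ and $f$ is a constant $\tau$. Thus $\psi-\tau\,d\eta=d\gamma=d_B\gamma$ is $d_B$-exact, i.e.\ $[\psi]_B=\tau[d\eta]_B$. Applying this with $\psi=\rho^T/2\pi$, the hypothesis $c_1(D)=0$ gives $c_1^B(S,\Phi)=\tau[d\eta]_B$.

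The step I expect to be the main obstacle is the identification $\iota(c_1^B(S,\Phi))=c_1(D)$, i.e.\ checking carefully that the transverse Ricci form represents the first Chern class of $D$ as an ordinary cohomology class. The transverse connection is a priori only defined on the local quotients $V_\alpha$, so one must verify that pulling it back by the submersions $\pi_\alpha$ and extending along $\xi$ by the Bott connection produces a bona fide connection on $D\to S$ (this is where one uses that the transition maps $\pi_\alpha\circ\pi_\beta^{-1}$ are isometric biholomorphisms, so the local connections agree on overlaps), and that its curvature is basic with trace $\rho^T$; ordinary Chern--Weil then applies. Everything else---the averaging over $T$, the splitting off of the $\eta$-component, and the ``contract with $\xi$'' trick---is routine once $L_\xi\eta=0$, the basicness of $\rho^T$, and the fact that $\overline{\exp(\bfR\xi)}$ is a torus are in place, all of which are standard in the Sasakian setting.
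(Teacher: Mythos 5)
The paper states this proposition without proof, simply citing \cite{bgo}, so there is no in-paper argument to compare against; your proof is correct and is essentially the standard one from that literature (and from Proposition 4.3 of \cite{FOW}): identify the real class $c_1(D)$ with the image of $c_1^B(S,\Phi)$ under the natural map $\iota$ from basic to ordinary de Rham cohomology via the transverse Levi-Civita/Bott connection on $D$, and show $\ker\iota=\bfR[d\eta]_B$ in degree two, which you establish directly (your averaging over the torus closure of the Reeb flow plus the contraction-with-$\xi$ trick is a legitimate substitute for quoting the Gysin-type exact sequence of the Reeb foliation). The only caveat worth recording is that, as in the sources, the equivalence is with $c_1(D)$ read in $H^2(S;\bfR)$ — integrally the ``only if'' direction yields only that $c_1(D)$ is torsion — and you correctly make this real-coefficient reading explicit.
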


Recall that the integral invariant $f$ on a K\"ahler manifold, which is 
defined in Chapter $2$, is a character on the Lie algebra of holomorphic
Hamiltonian vector fields. Then we would like to define
transverse holomorphic Hamiltonian vector fields as follows.

\begin{definition}
We call a complex vector field $X$ on a Sasaki manifold $(S,g;\xi,\eta,\Phi)$
{\bf transverse holomorphic Hamiltonian} if it satisfies the following
two conditions:

$(1)$ $d\pi_\alpha(X)$ is a holomorphic vector field on $V_\alpha$. 

$(2)$ The complex valued function $u_X=\sqrt{-1}\eta(X)/2$ satisfies
$$\bar{\partial}_Bu_X=-\frac{\sqrt{-1}}{2}i(X)d\eta.$$
We denote by $\mathfrak{h}(S,\xi,\Phi)$ the set of all transverse holomorphic
Hamiltonian vector fields.
\end{definition}

Now we define an integral invariant of Sasaki manifold.
Let $(S,g';\xi',\eta',\Phi)$ be a transversely
$(2m+1)$-dimensional compact Sask. manifold.
Suppose that it is transversely positive and that $c_1(\Ker \ \eta')=0$.
Then if we choose a constant $a>0$ properly, the $D$-nomothetic transformed
Sask. manifold $(S,g;\xi,\eta,\Phi)$ satisfies
\begin{equation}
c_1^B(S,\Phi)=(2m+2)[d\eta/2]_B.
\end{equation}
Here 
$$\xi=\frac{1}{a}\xi',\ \ \eta=a\eta',\ \ g=erg'+(a^2-a)\eta'\otimes \eta',$$
$c_1^B(S,\Phi)$ is the basic first Chern class and
$[d\eta/2]_B$ is the basic K\"ahler class.
By a result of El Kacimi-Aloe \cite{elkacimi} there is a basic function $h$
such that
\begin{equation}
\rho^T-(m+1)d\eta=\sqrt{-1}\partial_B\bar{\partial}_Bh.
\end{equation}

We set

\begin{equation}
f_\xi(X)=\int_S Xh\ (d\eta/2)^m\wedge \eta,\ \ \ X\in \mathfrak{h}(S,\xi,\Phi).
\end{equation}
Then we can prove the following
using similar arguments as the proof of Theorem $2.7$.

\begin{theorem}[\cite{bgs}, \cite{FOW}]

The linear function $f_\xi$ on $\mathfrak{h}(S,\xi,\Phi)$ is 
invariant under the deformation of Sask. structure $(S,g_\varphi;
\xi,\eta_\varphi,\Phi)$
by basic function $\varphi$. 
In particular
$f_\xi$ is a Lie algebra homomorphism on
$\mathfrak{h}(S,\xi,\Phi)$.
Further 
if there exists a basic function $\varphi$ such that
$(S,g_\varphi;\xi,\eta_\varphi,\Phi)$ is Sasaki-Einstein, then
$f_\xi$ vanishes identically.
\end{theorem}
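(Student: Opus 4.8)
The plan is to follow the strategy indicated just before the statement, namely to imitate the proof of Theorem~2.7 (the K\"ahler invariance of $f$) in the transverse setting. First I would show that $f_\xi$ is well defined, i.e.\ that the integral $\int_S Xh\,(d\eta/2)^m\wedge\eta$ does not depend on the choice of basic potential $h$ in $\rho^T-(m+1)d\eta=\sqrt{-1}\,\partial_B\bar\partial_B h$: two such potentials differ by a basic function that is killed by $\partial_B\bar\partial_B$, hence (on a compact Sasaki manifold, where the transverse maximum principle applies) is constant, and $\int_S (Xc)(d\eta/2)^m\wedge\eta=0$ for a constant $c$ since $X$ has no $\xi$-component after projecting, so $Xc=0$. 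Next I would establish the deformation invariance: fixing $\xi$ and $\Phi$, vary $\eta$ to $\eta_\varphi=\eta+\sqrt{-1}(\bar\partial_B-\partial_B)\varphi$ and $\omega^T$ to $\omega^T_\varphi=d\eta/2+\sqrt{-1}\,\partial_B\bar\partial_B\varphi$, differentiate the defining relation for $h$ along $t\mapsto t\varphi$, and integrate by parts in the basic Dolbeault complex. The key computation mirrors the K\"ahler one: $\dot h$ satisfies a transverse Poisson-type equation whose right-hand side involves $\Delta_B\varphi$ and the transverse scalar curvature, and when one feeds this into $\frac{d}{dt}\int_S X h\,(\omega^T_\varphi)^m\wedge\eta_\varphi$ the terms cancel after using that $d\pi_\alpha(X)$ is transversely holomorphic and that $u_X=\sqrt{-1}\eta(X)/2$ is its basic Hamiltonian via $\bar\partial_B u_X=-\tfrac{\sqrt{-1}}{2}\,i(X)d\eta$.

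Once invariance under basic deformations is in hand, the Lie-algebra-homomorphism property follows formally, exactly as in the K\"ahler case: the value $f_\xi([X,Y])$ is computed using a metric adapted to, say, the flow of the real part of $X$, and invariance lets one move freely between such adapted metrics; the bracket term then drops out because $f_\xi$ is, up to normalization, the transverse analogue of the character $f$, which is known to be a homomorphism. Concretely I would express $f_\xi(X)$ in the form $-\sqrt{-1}\int_S u_X(\sigma^T-\bar\sigma^T)(d\eta/2)^m\wedge\eta$ (with $\sigma^T$ the transverse scalar curvature and $\bar\sigma^T$ its average), which is the transverse Futaki invariant; the homomorphism property and the vanishing criterion are then read off from this pairing as in Section~2.

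Finally, the vanishing statement: if there is a basic $\varphi$ with $(S,g_\varphi;\xi,\eta_\varphi,\Phi)$ Sasaki-Einstein, then by Proposition~5.4 the transverse metric $g^T_\varphi$ satisfies $\mathrm{Ric}^T=(2m+2)g^T_\varphi$, so $\rho^T_\varphi=(m+1)d\eta_\varphi$, i.e.\ one may take $h\equiv 0$ for that Sasaki structure; hence $f_\xi$ computed with $g_\varphi$ is identically zero, and by the deformation invariance just proved it vanishes for the original structure as well. The main obstacle I anticipate is the analytic bookkeeping in the deformation step — making the transverse integration by parts rigorous (Stokes on the basic de Rham complex, disposal of the $\eta$-direction, and the transverse Hodge identities that let $\Delta_B\varphi$ be traded against $\sqrt{-1}\,\partial_B\bar\partial_B\varphi$ paired with Ricci) — rather than any conceptual difficulty; everything else is a routine transcription of the K\"ahler argument of Theorem~2.7.
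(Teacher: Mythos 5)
Your proposal is correct and is essentially the paper's own route: the text offers no separate argument here, saying only that the theorem follows ``using similar arguments as the proof of Theorem 2.7'' (with details deferred to \cite{bgs}, \cite{FOW}), and your transcription of that K\"ahler computation to the basic Dolbeault complex---$h$ well defined up to an additive constant, differentiation along $t\mapsto t\varphi$ with transverse integration by parts (basic $(2m+1)$-forms vanish, which disposes of the $\eta$-direction), and vanishing of $f_\xi$ for a Sasaki--Einstein representative because $\rho^T_\varphi=(m+1)d\eta_\varphi$ lets one take $h\equiv 0$ and then invoke the invariance---is exactly that transcription. The one step to tighten is the homomorphism property: it should be deduced from the invariance itself, by pulling back the Sasaki structure under the identity component of the transverse automorphism group to get $f_\xi(\mathrm{Ad}(g)X)=f_\xi(X)$ and then differentiating to obtain $f_\xi([Y,X])=0$, rather than from the remark that $f_\xi$ is ``the transverse analogue of the character $f$, which is known to be a homomorphism,'' which as written is circular.
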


\subsection{Toric Sasaki manifolds}

First of all, we define toric Sasaki manifolds. Then
we see a relation between toric Sasaki manifolds and rational convex
polyhedral cones.

\begin{definition}

A Sasaki manifold $(S,g;\xi,\eta,\Phi)$ is said to be a
{\bf toric Sasaki manifold} if the Riemannian cone $(C(S),\bar{g},J)$ is a
toric K\"ahler manifold.
\end{definition}

Let $(S,g;\xi,\eta,\Phi)$ be a $(2m+1)$ dimensional toric Sasaki manifold.
Then, by definition, $(m+1)$ dimensional torus $T^{m+1}$ acts on 
$(C(S),\bar{g},J)$ effectively, holomorphically and isometrically.
In this case
the moment map $\mu:C(S)\to \mathfrak{t}^*$ is given by
$$\langle \mu(x),X\rangle =r^2\Tilde{\eta}(X^\#(x)),$$
where $\mathfrak{t}^*$ is the dual of the Lie algebra $\mathfrak{t}$ of
$T^{m+1}$, $X\in \mathfrak{t}$ and $X^\#(x)=\frac d{dt}|_{t=0}\exp (tX)x$.

\begin{definition}
Let
$\mathbb{Z}_\mathfrak{t}:=\Ker \{\exp:\mathfrak{t}\to T^{m+1}\}$ be the
integral lattice of $\mathfrak{t}$. A subset $\mathcal{C}\subset
\mathfrak{t}^*$
is a {\bf rational convex polyhedral cone} if there exist
$\lambda_j\in \mathbb{Z}_\mathfrak{t}$, $j=1,\dots,d$, such that
$$\mathcal{C}=\{y\in \mathfrak{t}^*\ |\ \langle \lambda_j,y\rangle \ge 0,\ 
j=1,\cdots,d\}.$$
We assume that the set $\{\lambda_j\}$ is minimal in that for any $j$,
$$\mathcal{C}\not=
\{y\in \mathfrak{t}^*\ |\ \langle \lambda_k,y\rangle \ge 0,\ 
k\not=j\}$$
and that each $\lambda_j$ is primitive, i.e. $\lambda_j$ is not of the
form $\lambda_j=a\mu$ for an integer $a\ge 2$ and $\mu \in \mathbb{Z}
_\mathfrak{t}$.
Under these two assumptions a rational convex polyhedral cone $\mathcal{C}$
with nonempty interior is {\bf good} if the following condition holds. If
$$\{y\in \mathcal{C}\ |\ \langle \lambda_{i_j},y\rangle\ge 0,\ j=1,\cdots,k\},
\ \ \ \{i_1,\cdots,i_k\}\subset \{1,\cdots,d\},$$
is a nonempty face of $\mathcal{C}$, then $\lambda_{i_1},\cdots,\lambda_{i_k}$
are linearly independent over $\mathbb{Z}$ and
\begin{equation}
\left\{\sum _{j=1}^k a_j\lambda_{i_j}\ |\ a_j\in \mathbb{R}\right\}\cap
\mathbb{Z}_\mathfrak{t}=\left\{\sum _{j=1}^k a_j\lambda_{i_j}\ |\ a_j\in 
\mathbb{Z}\right\}.
\end{equation}
\end{definition}

\begin{lemma}[\cite{lerman}]
Let $(S,g)$ be a $(2m+1)$ dimensional compact toric Sasaki manifold.
Then the image of the moment map $\mathcal{C}(\mu):=\mu(C(S))$
is a good rational convex polyhedral cone.
Further 
there is a vector $\alpha$ in the interior of the dual cone
$$\mathcal{C}(\mu)^*=\{\alpha\in \mathfrak{t}\ |\ \langle \alpha,
X\rangle \ge 0\ \text{for any }X\in \mathcal{C}(\mu)\}$$
such that the Reeb vector field $\Tilde{\xi}$ is generated by $\alpha$, that
is $\Tilde{\xi}=\alpha^\#$. 

Conversely, if a good rational convex polyhedral cone $\mathcal{C}\subset 
\mathfrak{t}^*$ and a vector $\alpha$ in the interior of
$\mathcal{C}^*\subset \mathfrak{t}$ are given, then we can construct, by 
Delzant construction, a $(2m+1)$ dimensional compact toric Sasaki manifold
such that the image of the moment map is $\mathcal{C}$ and the Reeb vector
field is $\alpha^\#$, see Proposition $3.4$ of \cite{CFO}.
Such a Sasaki manifold
is irregular if and only if $\alpha$ is an
irrational point,
that is $\alpha\notin \mathbb{Q}_\mathfrak{t}=\mathbb{Z}_\mathfrak{t}\otimes
\mathbb{Q}$.
\end{lemma}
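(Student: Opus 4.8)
This lemma repackages Lerman's classification of compact connected toric contact manifolds---equivalently, of toric symplectic cones---in terms of the moment map, and the plan is to run that classification in the present set-up. Throughout write $\lambda=\frac12 r^{2}\widetilde\eta$ for the Liouville form and $\omega=d\lambda=\frac{\sqrt{-1}}{2}\partial\bar\partial r^{2}$ for the K\"ahler form on $C(S)$; then $(C(S),\omega)$ is a symplectic cone, $L_{r\partial_{r}}\omega=2\omega$. Since $T^{m+1}$ preserves $\overline g$ and $J$, hence $r$ and $\widetilde\eta$, it preserves $\lambda$; therefore its action is Hamiltonian with moment map $\mu$ as displayed above, and since the formula $\langle\mu(x),X\rangle=r^{2}\widetilde\eta(X^{\#}(x))$ is manifestly homogeneous of degree two in $r$, its image $\mathcal C(\mu)$ is a convex cone by the cone version of the Atiyah--Guillemin--Sternberg convexity theorem.

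For the first assertion I would argue in four steps. \emph{Locating the Reeb field.} The flow of $\widetilde\xi=Jr\partial_{r}$ preserves $J$, $\overline g$, $r$ (hence $\widetilde\eta$ and $\lambda$), so it commutes with the $T^{m+1}$-action, and together they generate a torus $T'$ still preserving $\lambda$, hence Hamiltonian; since an effective Hamiltonian torus on the $2(m+1)$-dimensional $C(S)$ has dimension at most $m+1$, we get $T'=T^{m+1}$ and $\widetilde\xi=\alpha^{\#}$ for a unique $\alpha\in\mathfrak t$. \emph{Placing $\alpha$ in the open dual cone.} From the moment-map formula and $\widetilde\eta(\widetilde\xi)=\frac{1}{r^{2}}\overline g(\widetilde\xi,\widetilde\xi)=1$ we get $\langle\mu(x),\alpha\rangle=r^{2}\widetilde\eta(\widetilde\xi)=r^{2}>0$ for every $x\in C(S)$, so $\alpha$ pairs strictly positively with $\mathcal C(\mu)$ off the vertex, i.e. $\alpha\in\mathrm{int}\,\mathcal C(\mu)^{*}$. \emph{Rationality.} Over the relative interior of a facet of $\mathcal C(\mu)$ the points share a one-dimensional isotropy subtorus $T_{j}\subset T^{m+1}$, whose Lie algebra is $\mathbb R\lambda_{j}$ with $\lambda_{j}\in\mathbb Z_{\mathfrak t}$ primitive; by the equivariant symplectic slice (Marle--Guillemin--Sternberg) normal form the $T_{j}$-moment map is a nonnegative quadratic vanishing exactly on that locus, whence $\mathcal C(\mu)=\{\,y\ |\ \langle\lambda_{j},y\rangle\ge 0,\ j=1,\dots,d\,\}$. \emph{Goodness.} Applying the same normal form along the face where $\langle\lambda_{i_{1}},\cdot\rangle=\dots=\langle\lambda_{i_{k}},\cdot\rangle=0$, the isotropy there is the subtorus generated by $\lambda_{i_{1}},\dots,\lambda_{i_{k}}$, and the hypothesis that $C(S)$ (hence $S$) be a \emph{smooth manifold} rather than an orbifold is exactly the requirement that this subtorus be lattice-compatibly embedded, i.e. that $\lambda_{i_{1}},\dots,\lambda_{i_{k}}$ span a saturated sublattice---the displayed ``good'' condition.

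For the converse I would carry out the Delzant-type construction for symplectic cones. Let $\lambda_{1},\dots,\lambda_{d}\in\mathbb Z_{\mathfrak t}$ be the primitive facet normals of the good cone $\mathcal C$, and let $\beta\colon\mathbb R^{d}\to\mathfrak t$ send $e_{j}\mapsto\lambda_{j}$; goodness forces $\beta(\mathbb Z^{d})=\mathbb Z_{\mathfrak t}$, so the induced $T^{d}\to T^{m+1}$ is onto with kernel a subtorus $K$ of dimension $d-m-1$. On $\mathbb C^{d}\setminus\{0\}$ with its flat K\"ahler cone structure and $T^{d}$-moment map $z\mapsto\frac12(|z_{1}|^{2},\dots,|z_{d}|^{2})$ (image the positive orthant, the model good cone), form the K\"ahler-cone reduction $C(S):=\mu_{K}^{-1}(0)/K$; goodness guarantees $K$ acts freely on $\mu_{K}^{-1}(0)$ (only locally freely for a merely rational cone), so $C(S)$ is a smooth K\"ahler cone on which the residual torus $T^{m+1}=T^{d}/K$ acts with moment-map image $\mathcal C$. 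Finally, given $\alpha\in\mathrm{int}\,\mathcal C^{*}$, set $\widetilde\xi:=\alpha^{\#}$ and $r^{2}:=2\langle\mu,\alpha\rangle$; as $\alpha$ is interior to the dual cone this is positive off the vertex and is a K\"ahler-cone radial function with $Jr\partial_{r}=\widetilde\xi$, so $S:=\{r=1\}$ is a compact toric Sasaki manifold with the prescribed moment cone and Reeb field (Proposition $3.4$ of \cite{CFO}).

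It remains to identify the (quasi-)regular locus: the Reeb flow on $S$ is the restriction of the one-parameter subgroup $t\mapsto\exp(t\alpha)$ of $T^{m+1}$, which is a closed circle---so that the Reeb foliation is the orbit foliation of a locally free $S^{1}$-action---precisely when $\mathbb R\alpha$ meets $\mathbb Z_{\mathfrak t}$ nontrivially, i.e. when $\alpha$ is rational up to scale; otherwise $\overline{\exp(\mathbb R\alpha)}$ is a subtorus of dimension at least $2$, a generic Reeb leaf is dense in it and hence non-closed, and $S$ is irregular. The main obstacle, and the genuine content in both directions, is the \emph{good} condition: forwards one must extract the saturation statement from the bare hypothesis that $S$ is a manifold, which is where the equivariant normal form near the boundary strata of $\mathcal C(\mu)$ is really used; backwards one must check that goodness---not just rationality---is what makes $K$ act freely, so that the reduced space is a manifold and not an orbifold. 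The convexity input and the identification $\widetilde\xi=\alpha^{\#}$ are comparatively routine.
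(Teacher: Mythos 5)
The paper itself gives no proof of this lemma: it simply cites Lerman's classification of compact toric contact manifolds and Proposition 3.4 of \cite{CFO} for the converse. Your sketch reproduces exactly the route taken in those references --- forward direction via the homogeneous moment map on the symplectic cone, the identification $\widetilde\xi=\alpha^{\#}$ inside the maximal torus, positivity $\langle\mu,\alpha\rangle=r^2>0$ to place $\alpha$ in the open dual cone, and the equivariant local normal form to get rationality, primitivity and goodness from smoothness; converse via the Delzant-type K\"ahler reduction of $\bfC^d$ and the choice $r^2=2\langle\mu,\alpha\rangle$; (quasi-)regularity from the closure of the one-parameter subgroup $\exp(t\alpha)$. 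So in approach you are aligned with the cited proofs, and your criterion for irregularity ($\alpha$ not a real multiple of a lattice vector) is in fact the careful version of the statement.

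One step of your converse, however, is wrong as written: goodness does \emph{not} force $\beta(\bfZ^d)=\bfZ_{\mathfrak t}$. For instance the $2$-dimensional cone in $\mathfrak t^*\cong\bfR^2$ with primitive normals $\lambda_1=(1,0)$, $\lambda_2=(1,p)$, $p\ge 2$, is good (the goodness condition only constrains the proper faces cut out by facets, not the apex), yet the normals generate an index-$p$ sublattice; the corresponding compact toric contact $3$-manifold is the lens space $S^3/\bfZ_p$. Consequently $K=\ker\bigl(T^d\to T^{m+1}\bigr)$ is in general a closed, possibly disconnected, abelian subgroup, and it is this full kernel --- not just its identity component --- by which one must reduce; goodness still guarantees that $K$ acts freely on the zero level set, which is precisely how Lerman and \cite{CFO} argue. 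If you reduce only by the connected subtorus, you obtain a smooth Sasaki manifold, but its moment cone equals $\mathcal C$ only with respect to the sublattice $\beta(\bfZ^d)$, i.e.\ a finite cover of the manifold the lemma asks for. This is a fixable slip, but it is exactly the point where goodness (as opposed to mere rationality plus primitivity) does its work, so it should be stated correctly.
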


We next see when a toric Sasaki manifold $(S,g;\xi,\eta,\Phi)$ is transversely
positive and satisfies $c_1(D)=0$. We identify $\mathfrak{t}^*\simeq
\mathbb{R}^{m+1}\simeq \mathfrak{t}$.

\begin{definition}[\cite{CFO}]
Let $\mathcal{C}=\{y\ |\ \langle \lambda_j,y\rangle \ge 0,\ 
j=1,\cdots,d\}\subset \mathbb{R}^{m+1}$ be a good rational convex polyhedral
cone. We call $\mathcal{C}$ a {\bf toric diagram of height }$l$ if
there exists $g\in SL(m+1,\mathbb{Z})$ such that
$g\lambda_j=(l,\lambda_j^1,\cdots,\lambda_j^m)$ for each $j$.
From now on, we always replace $\lambda_j$ by $g\lambda_j$
and assume that the $\lambda_j$ is of the form
$(l,\lambda_j^1,\cdots,\lambda_j^m)$.

\end{definition}

\begin{theorem}[\cite{CFO}]
Let $(S,g;\xi,\eta,\Phi)$ be a $(2m+1)$ dimensional compact toric Sasaki
manifold. If the equation $(14)$ holds, then there is a positive integer $l$
such that the image of the moment map $\mathcal{C}\subset \mathbb{R}^{m+1}$
is a toric diagram of height $l$. Further, 
the Reeb vector field can be written
as $\Tilde{\xi}=\alpha^\#$,
$\alpha\in \mathcal{C}_c^*:=
(\text{the interior of }\mathcal{C}^*)
\cap \{(y^0,\cdots,y^m)\in \mathbb{R}^{m+1}\ |\ y^0=l(m+1)\}$.

Conversely, if a toric diagram $\mathcal{C}\subset \mathbb{R}^{m+1}$ of 
height $l$
and $\alpha\in \mathcal{C}_c^*$ are
given, then the $(2m+1)$ dimensional toric Sasaki manifold given by
Delzant construction as in Lemma $5.14$ satisfies $(14)$.
In this case the $l$-th power $K_{C(S)}^{\otimes l}$
of the canonical bundle $K_{C(S)}$ of $C(S)$ is trivial.
\end{theorem}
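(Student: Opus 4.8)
The plan is to prove the two directions separately, using the Delzant-type construction of Lemma 5.14 to pass freely between the combinatorial data $(\mathcal{C},\alpha)$ and the toric Sasaki manifold, and to translate the analytic condition $(14)$, $c_1^B(S,\Phi)=(2m+2)[d\eta/2]_B$, into linear-algebraic statements about the normals $\lambda_j$. First I would recall that for a toric K\"ahler cone the Reeb field $\widetilde\xi=\alpha^\#$ lies in the interior of the dual cone $\mathcal{C}^*$, and that the transverse Ricci form $\rho^T$ and the basic first Chern class $c_1^B$ can be computed from the combinatorial normalization. The key elementary observation, going back to Proposition 5.12 in the excerpt, is that $(14)$ forces $c_1(D)=0$, hence by Proposition 5.12 $c_1^B(S,\Phi)=\tau\,d\eta$ for some constant $\tau$; comparing with $(14)$ pins down $\tau$. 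The condition $c_1(D)=0$ is then equivalent to the statement that the canonical bundle $K_{C(S)}$ admits a flat structure along the torus directions, which in Delzant coordinates means precisely that there is a lattice vector $\gamma\in\mathbb{Z}_{\mathfrak t}$ with $\langle\gamma,\lambda_j\rangle$ equal to a common value for all $j$. After applying an element of $SL(m+1,\mathbb{Z})$ to send $\gamma$ to the first basis covector, $\gamma=(1,0,\dots,0)$, this says exactly $\lambda_j=(l,\lambda_j^1,\dots,\lambda_j^m)$ with a single integer $l$: this is the toric diagram of height $l$. That $l$ is a positive integer follows because the $\lambda_j$ are primitive lattice vectors and $\mathcal{C}$ has nonempty interior with $\alpha$ in the interior of $\mathcal{C}^*$, forcing $\langle\alpha,\lambda_j\rangle>0$ and hence (after the normalization) $l\ge 1$.

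For the location of the Reeb vector, I would compute the constant $\tau$ in $c_1^B=\tau\,d\eta$ explicitly in terms of $\alpha$. One knows from the toric description that evaluating $c_1^B$ against the transverse K\"ahler class, or equivalently tracing through the cone geometry, the transverse Einstein-type normalization $(14)$ holds iff $\langle\alpha,\gamma\rangle$ takes the specific value $l(m+1)$, i.e. the $0$-th coordinate $y^0$ of $\alpha$ equals $l(m+1)$. This is exactly the condition $\alpha\in\mathcal{C}_c^*$. So the forward direction consists of: (i) deduce $c_1(D)=0$ from $(14)$ via Proposition 5.12; (ii) rewrite $c_1(D)=0$ as the height-$l$ toric diagram condition after an $SL(m+1,\mathbb{Z})$ change of basis; (iii) compute the scalar $\tau$ and extract $y^0=l(m+1)$.

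For the converse, given a toric diagram of height $l$ and $\alpha\in\mathcal{C}_c^*$, Lemma 5.14 produces a compact toric Sasaki manifold with moment cone $\mathcal{C}$ and Reeb field $\alpha^\#$. Since all $\lambda_j$ have first coordinate $l$, the vector $\gamma=(1,0,\dots,0)/l$... more precisely $l\gamma$ with $\gamma=(1,0,\dots,0)$ pairs to the constant $l$ with every $\lambda_j$, which gives a nowhere-vanishing torus-invariant section of $K_{C(S)}^{\otimes l}$ — this is the trivialization of $K_{C(S)}^{\otimes l}$ claimed at the end, and it forces $c_1(D)=0$. Then Proposition 5.12 gives $c_1^B=\tau\,d\eta$, and the condition $y^0=l(m+1)$ for $\alpha$ is exactly what makes $\tau=(m+2)$ in the normalization $d\eta/2$, i.e. $(14)$. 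The triviality of $K_{C(S)}^{\otimes l}$ is then immediate from the explicit invariant holomorphic volume form built from the toric coordinates.

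The main obstacle I expect is step (ii)--(iii) of the forward direction: making the translation between the cohomological condition $(14)$ and the combinatorial height condition genuinely rigorous requires a careful bookkeeping of the toric/Delzant coordinates on $C(S)$, the precise relation between $\rho^T$, the canonical bundle of the cone, and the normals $\lambda_j$, and the exact constant relating $\langle\alpha,\gamma\rangle$ to the Einstein constant $2m+2$. The topological input (primitivity and goodness of $\mathcal{C}$) is what guarantees $l$ is a genuine positive integer rather than a rational number, and one must check that the $SL(m+1,\mathbb{Z})$ normalization does not destroy goodness — this is routine but must be stated. Everything else is a matter of invoking Lemma 5.14, Proposition 5.12, and the standard toric dictionary.
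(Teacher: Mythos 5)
The paper itself offers no proof of this statement: Theorem 5.16 is quoted from \cite{CFO} (see also \cite{FOW}), so your proposal can only be measured against the argument in those sources. Your overall strategy is in fact the same as theirs: pass from $(14)$, i.e.\ $c_1^B(S,\Phi)=(2m+2)[d\eta/2]_B$, to $c_1(D)=0$ via the Boyer--Galicki--Ornea result (Proposition 5.9 here, which you misnumber as 5.12), translate that into the existence of a lattice covector $\gamma$ with $\langle\gamma,\lambda_j\rangle=l$ for all $j$ (the $l$-Gorenstein condition on the affine toric cone, equivalent to the height-$l$ diagram after an $SL(m+1,\mathbb{Z})$ change of basis), and pin the Reeb field to the hyperplane $y^0=l(m+1)$ by computing the proportionality constant between $c_1^B$ and $[d\eta]_B$ in terms of $\langle\gamma,\alpha\rangle$; the converse then runs backwards through the Delzant construction, with $\chi^\gamma$ trivializing $K_{C(S)}^{\otimes l}$.

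The gap is that the two steps carrying all the content are asserted rather than proved. First, the passage from $c_1(D)=0$ (a statement in real cohomology of $S$) to the existence of an \emph{integral} $\gamma$ with $\langle\gamma,\lambda_j\rangle$ constant is justified only by the phrase ``$K_{C(S)}$ admits a flat structure along the torus directions,'' which is not an argument. In \cite{CFO}/\cite{FOW} this is where the work is: one either computes the real divisor class group of the affine toric variety $X_\sigma$, $\sigma=\mathrm{cone}(\lambda_1,\dots,\lambda_d)$, and identifies $c_1(D)$ with the restriction of $c_1(K_{C(S)}^{-1})$, or one uses the transverse Ricci potential to build a nowhere-vanishing multivalued holomorphic $(m+1,0)$-form on $C(S)$ on which $T^{m+1}$ acts by a character; integrality of $\gamma$ is precisely the statement that torus weights of holomorphic objects are lattice points, and this mechanism is absent from your write-up (your ``primitivity and goodness guarantee $l$ is an integer'' addresses a different, easier point, and your positivity argument via ``$\langle\alpha,\lambda_j\rangle>0$'' pairs two elements of $\mathfrak t$; the correct reason $l\neq0$ is that strong convexity of the moment cone forces the $\lambda_j$ to span $\mathfrak t$). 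Second, the identification of the constant is only gestured at, and in the converse you state it incorrectly: you need $c_1^B=(m+1)[d\eta]_B$ (equivalently $(2m+2)[d\eta/2]_B$), not ``$\tau=m+2$,'' and the relation $\tau=\langle\gamma,\alpha\rangle/l$ --- which is exactly what makes the hyperplane $\{y^0=l(m+1)\}$ appear --- requires the computation relating the Ricci form of the cone, the transverse Ricci form, and the weight of $\chi^\gamma$ (or of the associated volume form) under the Reeb flow. Since the precise constant is the substance of the second half of the statement, this cannot be left as bookkeeping; as it stands the proposal is a correct roadmap of the \cite{CFO} proof rather than a proof.
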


\begin{example}
Let $m=2$. Then toric diagrams of height $1$ are obtained as 
follows\footnote{By \cite{lerman2}, if $S$ is simply connected, then
the image of the moment map is a toric diagram of height $1$.
However the converse is not always true, see \cite{CFO}.}.
Let $\Delta\subset \mathbb{R}^2$ be an integral convex polygon and
$v_j=(p_j,q_j)\in \mathbb{Z}^2,\ j=1,\cdots,d$ its vertices with 
counterclockwise order. Then
$\mathcal{C}_\Delta=\{(x,y,z)\ |\ x+p_jy+q_jz\ge 0,\ j=1,\cdots,d\}$
is a rational convex polyhedral cone in $\mathbb{R}^3$.
$\mathcal{C}_\Delta$ is a toric diagram of height $1$ if and only if
$\mathcal{C}_\Delta$ is good.

\begin{proposition}
$\mathcal{C}_\Delta$ is good if and only if either

$1$. $\lvert p_j-p_{j+1}\rvert=1$ or $\lvert q_j-q_{j+1}\rvert=1$

or

$2$. $\lvert p_j-p_{j+1}\rvert$ and $\lvert q_j-q_{j+1}\rvert$ are
relatively prime non-zero integers

for $j=1,\cdots,d$ where we have put $v_{d+1}=v_1$. 
\end{proposition}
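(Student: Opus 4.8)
The plan is to read off the face lattice of $\mathcal{C}_\Delta$ and then reduce the defining condition of a good cone to an arithmetic condition on consecutive vertices of $\Delta$, which unwinds into the stated alternative. Write $\lambda_j=(1,p_j,q_j)\in\mathbb{Z}^3$ for the inward normals, so $\mathcal{C}_\Delta=\{y\mid\langle\lambda_j,y\rangle\ge 0,\ j=1,\dots,d\}$; each $\lambda_j$ is automatically primitive since its first coordinate is $1$. First I would describe the faces. Since $\mathcal{C}_\Delta$ is the dual cone of $\sigma:=\mathbb{R}_{\ge 0}\langle(1,v_1),\dots,(1,v_d)\rangle$, the cone over $\{1\}\times\Delta$, and $\Delta$ is a convex polygon with vertices $v_1,\dots,v_d$ in cyclic order, the extreme rays of $\sigma$ are the $\mathbb{R}_{\ge 0}(1,v_j)$ and its $2$-dimensional faces are spanned by consecutive pairs $(1,v_j),(1,v_{j+1})$; dualizing, the facets of $\mathcal{C}_\Delta$ are $F_j:=\mathcal{C}_\Delta\cap\{\langle\lambda_j,\cdot\rangle=0\}$, its $1$-dimensional faces are exactly $E_j:=F_j\cap F_{j+1}$ for $j=1,\dots,d$ (indices mod $d$, $v_{d+1}=v_1$), and its only other face is the apex $\{0\}$. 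Because the $v_k$ are distinct vertices of a convex polygon, no $\lambda_k$ with $k\neq j$ vanishes identically on $F_j$ and no $\lambda_k$ with $k\notin\{j,j+1\}$ vanishes identically on $E_j$ (otherwise $v_k$ would lie on the line through the edge $[v_j,v_{j+1}]$, hence on that edge). Since the apex imposes no condition, checking goodness amounts to checking the linear independence and saturation conditions on each $F_j$ and on each $E_j$.

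On a facet $F_j$ the only normal is the primitive vector $\lambda_j$, so both conditions hold trivially, and all the content is in the edges. On $E_j$ the normals $\lambda_j,\lambda_{j+1}$ are linearly independent over $\mathbb{Z}$ because $v_j\neq v_{j+1}$, and the saturation condition reads
\[
\bigl(\mathbb{R}\lambda_j+\mathbb{R}\lambda_{j+1}\bigr)\cap\mathbb{Z}^3=\mathbb{Z}\lambda_j+\mathbb{Z}\lambda_{j+1}.
\]
Here I would use $\mathbb{Z}\lambda_j+\mathbb{Z}\lambda_{j+1}=\mathbb{Z}\lambda_j+\mathbb{Z}(0,p_{j+1}-p_j,q_{j+1}-q_j)$ together with the splitting $\mathbb{Z}^3=\mathbb{Z}\lambda_j\oplus(\{0\}\times\mathbb{Z}^2)$ induced by the first coordinate (legitimate since $\lambda_j$ has first coordinate $1$): intersecting the plane $\mathbb{R}\lambda_j+\mathbb{R}(0,a,b)$ with $\mathbb{Z}^3$ gives $\mathbb{Z}\lambda_j\oplus\mathbb{Z}(0,a/g,b/g)$ with $g=\gcd(a,b)$, and this coincides with $\mathbb{Z}\lambda_j+\mathbb{Z}(0,a,b)$ precisely when $g=1$. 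Thus $\mathcal{C}_\Delta$ is good if and only if $\gcd(p_{j+1}-p_j,\,q_{j+1}-q_j)=1$ for every $j$.

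It then remains to check that, for $a:=p_j-p_{j+1}$ and $b:=q_j-q_{j+1}$ (not both zero, as $v_j\neq v_{j+1}$), the equality $\gcd(a,b)=1$ is equivalent to the stated alternative: $\lvert a\rvert=1$ or $\lvert b\rvert=1$, or else $\lvert a\rvert$ and $\lvert b\rvert$ are relatively prime non-zero integers. This is elementary: in the first case one entry is $\pm1$, forcing $\gcd=1$ even if the other entry vanishes; in the second $\gcd=1$ by definition; conversely, if $\gcd(a,b)=1$ and neither entry is $\pm1$, then neither entry can be $0$ (else the other would be $\pm1$), so $\lvert a\rvert,\lvert b\rvert$ are coprime and non-zero. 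Combined with the previous paragraph, this yields the proposition.

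I expect the only genuine work to be the first step, namely pinning down the face lattice of $\mathcal{C}_\Delta$: one must exploit the convexity and cyclic ordering of the $v_j$ to see that the $1$-dimensional faces come exactly from consecutive pairs and that no spurious normal vanishes on a given face, and one must be clear that the apex contributes no constraint — consistent with the existence of good cones $\mathcal{C}_\Delta$ with arbitrarily many facets. Once the face lattice is in hand, the lattice-arithmetic step and the number-theoretic reformulation are routine.
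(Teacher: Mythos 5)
Your proof is correct, and since the paper states this proposition without any proof (it is quoted from \cite{CFO} inside Example 5.16), your argument supplies exactly the standard missing verification: identify the positive-dimensional faces of $\mathcal{C}_\Delta$ as the facets $F_j$ and the rays $E_j=F_j\cap F_{j+1}$ (with no spurious normal vanishing on them, by convexity of $\Delta$), note that only the edge condition is nontrivial, and compute that saturation at $E_j$ is precisely $\gcd(p_{j+1}-p_j,\,q_{j+1}-q_j)=1$, which unwinds to the stated alternative. One point worth making explicit: your reading of Definition 5.13 — the condition is imposed only on faces of positive dimension, so the apex contributes nothing, and the ``$\ge 0$'' there should be ``$=0$'' — is Lerman's intended definition; under a literal reading that includes subsets of normals cutting out $\{0\}$, no cone with more than three facets could be good, so your interpretation is not merely convenient but necessary for the proposition to hold, and you were right to flag it.
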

\end{example}

\subsection{Sasaki-Ricci solitons}

To investigate the existence problem of Sasaki-Einstein metrics,
we introduce transverse K\"ahler-Ricci soliton (Sasaki-Ricci soliton).

\begin{definition}
A $(2m+1)$ dimensional Sasaki manifold $(S,g;\xi,\eta,\Phi)$ with a Hamiltonian
holomorphic vector field $X$ is called a {\bf K\"ahler-Ricci soliton}
or {\bf Sasaki-Ricci soliton} if
\begin{equation}
\rho^T-(2m+2)\omega^T=L_X\omega^T
\end{equation}
holds. Here $\rho^T$ and $\omega^T=d\eta/2$ are the transverse Ricci form and
the transverse K\"ahler form respectively.
\end{definition}

If $(S,g;\xi,\eta,\Phi)$ with $X$ is a Sasaki-Ricci soliton, then
$c_1^B(S,\Phi)=(2m+2)[\omega^T]_B$. Moreover
when $X=0$, $(S,g)$ is a Sasaki-Einstein manifold
by Proposition $5.4$.

We next consider the existence problem of Sasaki-Ricci solitons.
We need to consider ``normalized transverse
holomorphic Hamiltonian vector fields", whose corresponding Hamiltonian
function $u_X$ satisfying
\begin{equation}
\int_S u_Xe^h\omega^T\wedge \eta=0.
\end{equation}
Here $h$ is the real valued function on $S$ defined by $(15)$.
For any transverse holomorphic Hamiltonian vector field $X$, there is a
unique constant $c\in \mathbb{R}$ such that $X+c\xi$ is a normalized
transverse holomorphic Hamiltonian vector field. For simplicity of notation,
from now on any transverse holomorphic Hamiltonian vector field $X$ we consider
is normalized and its Hamiltonian function is denoted by $\theta_X$. 
Hence $\theta_X$ satisfies $(19)$.

As in Tian and Zhu \cite{TianZhu} we define a generalized
integral invariant $f_X$ for a given transverse holomorphic Hamiltonian 
vector field $X$ by
$$f_X(v)=-\int_S \theta_ve^{\theta_X}\omega^T\wedge \eta.$$
We can see that $f_X$ gives an invariant of the transverse holomorphic
structure $(S,\xi,\Phi)$. Moreover it gives an obstruction to the
existence of Sasaki-Ricci soliton, that is, if $(S,g;\xi,\eta,\Phi,X)$
is a Sasaki-Ricci soliton then $f_X(v)=0$ for any $v\in \mathfrak{h}
(S,\xi,\Phi)$. Note here that when $X=0$ the invariant $f_0$ is a
constant multiple of the integral invariant $f_\xi$ defined by $(16)$.

\begin{proposition}
Let $(S,g;\xi,\eta,\Phi)$ be a $(2m+1)$ dimensional compact Sasaki manifold
satisfying $(14)$. Then there exists a normalized transverse holomorphic
Hamiltonian vector field
$X\in \mathfrak{h}
(S,\xi,\Phi)$ such that $f_X\equiv 0$.
\end{proposition}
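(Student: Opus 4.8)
The plan is to follow the strategy of Tian--Zhu \cite{TianZhu} for K\"ahler--Ricci solitons on Fano manifolds, in the toric incarnation used in \cite{FOW}, \cite{CFO}: reduce the vanishing of $f_X$ to a finite-dimensional convex minimization problem over the Lie algebra of the transverse torus. By the toric-diagram description above, hypothesis $(14)$ means the moment cone $\mathcal{C}=\mu(C(S))\subset\mathfrak{t}^*$ is a toric diagram of height $l$ and the Reeb field is $\widetilde\xi=\alpha^\#$ with $\alpha$ in the interior of the dual cone; slicing $\mathcal{C}$ by the characteristic hyperplane $\{\langle\alpha,y\rangle=1\}$ (compact because $\alpha\in\mathrm{int}\,\mathcal{C}^*$) produces a compact polytope encoding the transverse toric data, and the Guillemin--Abreu correspondence identifies transverse K\"ahler metrics in the fixed basic class $[d\eta/2]_B$ with convex symplectic potentials on the associated $m$-dimensional transverse moment polytope $P$. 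Under this dictionary, an element $X$ of the Lie algebra $\mathfrak{t}_S$ of the $m$-torus acting on the transverse structure has normalized Hamiltonian $\theta_X$ equal to an affine function $\ell_X$ on $P$, and the density $e^{h}(\omega^T)^m\wedge\eta$, with $h$ from $(15)$, pushes forward to $e^{\ell_0(y)}\,dy$ for a fixed affine function $\ell_0$.

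With these identifications, pushing the integral defining $f_X$ forward to $P$ gives $f_X(v)=-c\int_P \ell_v(y)\,e^{\ell_X(y)+\ell_0(y)}\,dy$ for $v\in\mathfrak{t}_S$, with $c>0$ a universal constant. Consider the function
\[
 F(\theta)=\int_P e^{\ell_\theta(y)+\ell_0(y)}\,dy ,\qquad \theta\in\mathfrak{t}_S,
\]
where $\ell_\theta$ is the affine function attached to $\theta$ and $X_\theta$ the corresponding normalized transverse holomorphic Hamiltonian vector field. Then the directional derivative of $F$ at $\theta$ along $v$ equals $-c^{-1}f_{X_\theta}(v)$, so it suffices to produce a critical point of $F$; the normalization $(19)$ is exactly what restricts $\theta$ to the $m$-dimensional space $\mathfrak{t}_S$, the remaining constant ambiguity being absorbed into $\xi$, so a critical point yields $f_X\equiv 0$ on all of $\mathfrak{t}_S$, not merely up to a constant. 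Now $F$ is strictly convex, since its Hessian at $\theta$ is $\int_P (y-\bar y_\theta)\otimes(y-\bar y_\theta)\,e^{\ell_\theta+\ell_0}\,dy$ with $\bar y_\theta$ the weighted barycenter and $P$ of full dimension $m$; and $F$ is proper, because $P$ is compact and --- this is where $(14)$ enters --- the transverse Fano condition forces the (normalized) polytope $P$ to contain the origin in its interior, so that $F(\theta)\to+\infty$ as $\|\theta\|\to\infty$ in every direction. Hence $F$ attains its minimum at a unique $\theta_X\in\mathfrak{t}_S$, and the corresponding normalized transverse holomorphic Hamiltonian vector field $X$ satisfies $f_X(v)=0$ for all $v\in\mathfrak{t}_S$.

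It remains to upgrade $f_X\equiv 0$ from $\mathfrak{t}_S$ to the whole of $\mathfrak{h}(S,\xi,\Phi)$. For this one invokes the transverse analogue of the Matsushima--Calabi structure theorem, available from transverse Hodge theory as in \cite{FOW}: relative to the maximal torus $\mathfrak{t}_S^{\mathbb C}$, the Lie algebra $\mathfrak{h}(S,\xi,\Phi)$ decomposes into $\mathfrak{t}_S^{\mathbb C}$ together with root spaces (and a possible non-reductive part), and on these complementary summands $f_X(v)=-\int_S\theta_v\, e^{\theta_X}\,\omega^T\wedge\eta$ is the integral of a transverse divergence against the basic volume form $e^{\theta_X}\omega^T\wedge\eta$; the choice of $X$ made above makes this weighted volume ``balanced'' along $\mathfrak{t}_S$, and a weighted Stokes argument --- the modified analogue of the vanishing of the ordinary character $f$ on $[\mathfrak{g},\mathfrak{g}]$ --- forces these contributions to vanish. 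Hence $f_X\equiv 0$.

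The main obstacle is setting up this transverse toric/Guillemin dictionary precisely: identifying the polytope $P$ and the affine function $\ell_0$ coming from $h$, checking that the normalization $(19)$ matches exactly the finite-dimensional space over which $F$ is minimized (so that criticality gives $f_X=0$ on the nose rather than up to a constant), and justifying the weighted Stokes argument off the torus. Once the dictionary is in place, the convex-analysis core --- strict convexity and properness of $F$ --- is routine, and properness is precisely where the transverse Fano hypothesis $(14)$ is used.
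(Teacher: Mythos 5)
The central gap is that your argument proves a different (weaker) statement than the one asserted: the proposition is for an \emph{arbitrary} compact Sasaki manifold satisfying $(14)$, with no toric hypothesis, whereas your entire reduction --- moment cone as a toric diagram, characteristic polytope $P$, Guillemin--Abreu symplectic potentials, Hamiltonians as affine functions on $P$ --- requires an effective $(m+1)$-torus action on the cone. For a general Sasaki manifold satisfying $(14)$ the only torus available may be the closure of the Reeb flow, and $\mathfrak{h}(S,\xi,\Phi)$ is not captured by any polytope. The proof the authors have in mind (following Tian--Zhu, carried out in \cite{FOW}) works directly on the manifold: fix a maximal compact torus $T$ in the automorphism group of the transverse holomorphic structure, restrict to normalized Hamiltonian fields $X$ in its Lie algebra, and minimize the strictly convex functional $X\mapsto \int_S e^{\theta_X}(\omega^T)^m\wedge\eta$; properness comes not from compactness of a polytope but from the normalization $(19)$ (a nonzero normalized $\theta_v$ has strictly positive maximum on an open set, so $\int_S e^{s\theta_v}\to\infty$ as $s\to\infty$), and the critical point gives $f_X(v)=0$ for $v$ in the torus algebra. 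Vanishing on the rest of $\mathfrak{h}(S,\xi,\Phi)$ then follows from the weight-space decomposition under $T$: for $v$ in a nonzero weight space, $\theta_v$ transforms by a nontrivial character while the measure $e^{\theta_X}(\omega^T)^m\wedge\eta$ is $T$-invariant, so the integral vanishes by averaging. Your final ``weighted Stokes argument'' gestures at this but is not an argument as stated, and this step is needed even in the toric case, since $\mathfrak{h}(S,\xi,\Phi)$ can be strictly larger than the complexified torus algebra.

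There is also an error in your toric dictionary itself. The invariant is defined as $f_X(v)=-\int_S \theta_v e^{\theta_X}\,\omega^T\wedge\eta$, with no factor $e^h$; the weight $e^h$ enters only through the normalization $(19)$. The pushforward of the transverse symplectic volume under the moment map is (a constant times) Lebesgue measure on $P$, so the correct polytope expression is $f_X(v)=-c\int_P \ell_v\,e^{\ell_X}\,dy$, whereas your formula carries an extra factor $e^{\ell_0}$ coming from your claim that $e^h(\omega^T)^m\wedge\eta$ pushes forward to $e^{\ell_0(y)}dy$ with $\ell_0$ affine --- a claim that is unjustified and false in general (the Ricci potential does not push forward to an affine-exponential density for an arbitrary metric in the class). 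As written, your critical point annihilates the weighted functional $\int_P\ell_v e^{\ell_X+\ell_0}dy$, not $f_X$. Relatedly, properness of the convex functional has nothing to do with $(14)$ forcing the origin into the interior of $P$: once the Hamiltonians are normalized, properness follows from full-dimensionality of $P$ (or, in the general case, from the Tian--Zhu max argument); the role of $(14)$ is to make $h$, the normalization, and the whole transverse Fano setup exist in the first place.
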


Let $(S,g;\xi,\eta,\Phi)$ be a $(2m+1)$ dimensional compact Sasaki manifold
which satisfies $(14)$ and $X\in \mathfrak{h}(S,\xi,\Phi)$ such that
$f_X\equiv 0$. Then the Sasaki structure $(S,g_\varphi;\xi,\eta_\varphi,\Phi)$
defined by a real valued basic function $\varphi$, see Lemma $5.8$,
is a Sasaki-Ricci soliton if and only if the following Monge-Amp\`ere
equation
\begin{equation}
\frac{\det(g_{i\bar{j}}^T+\varphi_{i\bar{j}})}{\det(g_{i\bar{j}}^T)}=
\exp (-(2m+2)\varphi-\theta_X-X\varphi+h),\ \ 
(g_{i\bar{j}}^T+\varphi_{i\bar{j}})>0
\end{equation}
holds, \cite {FOW}. Here $g_{i\bar{j}}^T$ is the components of the
transverse K\"ahler metric and $\varphi_{i\bar{j}}=\partial ^2\varphi/
\partial z^i\partial \overline{z^j}$, where $\{z^i\}$ are holomorphic
coordinates of
$V_\alpha$.
Using the continuity method, 
we see that there exists a Sasaki-Ricci soliton
if we have a priori $C^0$-estimate of $\varphi$.
When $(S,g;\xi,\eta,\Phi)$ is toric, we can verify a priori estimate
as Wang and Zhu \cite{WangZhu} showed in toric K\"ahler case.

\begin{theorem}[\cite{FOW}]
Let $(S,g;\xi,\eta,\Phi)$ be a $(2m+1)$ dimensional compact toric Sasaki
manifold which satisfies $(14)$ and $X\in \mathfrak{h}(S,\xi,\Phi)$
such that $f_X\equiv 0$. Then there exists a $T^{m+1}$-invariant
real valued basic function $\varphi$ such that $(S,g_\varphi
;\xi,\eta_\varphi,\Phi,X)$ is a Sasaki-Ricci soliton.
\end{theorem}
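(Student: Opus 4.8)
The plan is to solve the complex Monge--Amp\`ere equation (20) for a $T^{m+1}$-invariant real basic potential $\varphi$ by the continuity method, deducing the a priori estimates needed for closedness from the toric structure as Wang--Zhu did in the toric K\"ahler--Ricci soliton case \cite{WangZhu}, and carrying the whole argument out in the transverse (foliated) category by means of El Kacimi-Aloe's transverse Hodge theory and transverse Calabi--Yau theorem \cite{elkacimi}. Throughout one restricts to $T^{m+1}$-invariant basic functions, so that the transverse K\"ahler potential descends to a convex function on (a slice of) the moment cone, which is what makes the toric estimates available.

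First I would set up a continuity path: for $t\in[0,1]$ consider
\begin{equation*}
\frac{\det(g_{i\bar j}^T+\varphi_{i\bar j})}{\det(g_{i\bar j}^T)}
=\exp\bigl(-t\bigl((2m+2)\varphi+\theta_X+X\varphi\bigr)+h+c_t\bigr),\qquad (g_{i\bar j}^T+\varphi_{i\bar j})>0,
\end{equation*}
among $T^{m+1}$-invariant basic functions, where $c_t$ is the constant making the right-hand side integrate against $\omega^T\wedge\eta$ to the transverse volume. At $t=0$ this is the transverse Calabi--Yau equation $\det(g^T+\varphi)/\det g^T=e^{h+c_0}$, solvable by \cite{elkacimi}; at $t=1$ the normalization (15) of $h$ and (19) of $\theta_X$, together with the hypothesis $f_X\equiv0$, force $c_1=0$, so that a solution there is exactly a Sasaki--Ricci soliton with transverse K\"ahler form $\omega_\varphi^T$ and soliton field $X$. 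Let $I\subset[0,1]$ be the (nonempty, since $0\in I$) set of solvable parameters.

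Openness of $I$ in $[0,1)$ is the routine part: at a solution $\varphi_t$ the linearization, acting on $T^{m+1}$-invariant basic functions modulo the constant enforcing the integrability constraint, has the form $\psi\mapsto\Delta_{\varphi_t}\psi+X\psi+(\text{lower-order terms})$; the transverse analogue of the Bochner--Lichnerowicz eigenvalue estimate bounds the first eigenvalue of the associated weighted transverse Laplacian away from the critical value for $t<1$, so this operator is invertible on basic H\"older spaces (using transverse Hodge theory \cite{elkacimi}) and the implicit function theorem applies. Closedness of $I$ in $[0,1]$, which also produces the solution at $t=1$, reduces by the usual bootstrap to a uniform bound $\|\varphi_t\|_{C^0}\le C$: granting this, the second-order Aubin--Yau estimate follows from the equation together with a lower bound on the transverse holomorphic bisectional curvature, the complex Evans--Krylov theorem upgrades it to a uniform $C^{2,\alpha}$ bound, and Schauder bootstrapping yields uniform $C^\infty$ control, each of these steps being local in the transverse charts $V_\alpha$ and hence identical to the K\"ahler case.

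The main obstacle is therefore the a priori $C^0$ estimate, and this is exactly where the toric hypothesis and $f_X\equiv0$ are indispensable. Following Wang--Zhu, I would pass from the $T^{m+1}$-invariant transverse potential to a convex function on $\mathbb{R}^m$ via the symplectic-potential/Legendre-duality description of toric transverse K\"ahler structures, using that the moment image is a good cone which is a toric diagram of height $\ell$ to single out the relevant $m$-dimensional polytope $P$ obtained by slicing the cone $\mathcal C$ along the level set determined by the Reeb field $\alpha$; equation (20) then becomes a real Monge--Amp\`ere equation on $\mathbb{R}^m$ whose data carry the weight $e^{\theta_X+h}$. The normalization (19) says precisely that the weighted functional $\ell\mapsto\int_P\ell\,e^{\theta_X}\,d\sigma$ annihilates the non-constant affine functions --- the combinatorial incarnation of $f_X\equiv0$ --- and it is this balancing that lets the convex-analytic argument close: after normalizing the convex potential by its value and gradient at the weighted barycenter, a degeneration of the normalized family would force that weighted functional to be sign-definite on a half-space, contradicting the balancing, and the resulting non-degeneration yields the uniform bound on $\varphi_t$. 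Carrying this through rigorously --- keeping track of the Reeb slicing, of the height-$\ell$ normalization of the cone, and of the extra weight $e^h$ in (19)--(20) relative to the toric K\"ahler--Einstein case treated by Wang--Zhu --- is the technical core of the proof; the remainder is a transcription of the K\"ahler--Ricci-soliton machinery into transverse K\"ahler geometry.
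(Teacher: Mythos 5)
Your proposal follows essentially the same route as the paper: reduce the soliton condition to the transverse Monge--Amp\`ere equation (20), run the continuity method in the basic/foliated category (openness via the weighted transverse Lichnerowicz-type eigenvalue estimate, closedness via the standard $C^2$/Evans--Krylov/Schauder chain once a $C^0$ bound is known), and obtain the crucial a priori $C^0$ estimate from the Wang--Zhu toric convex-analysis argument on the polytope cut out by the Reeb slicing of the moment cone, where $f_X\equiv 0$ supplies the weighted balancing condition. The only deviation --- scaling all of $(2m+2)\varphi+\theta_X+X\varphi$ by $t$ and starting from El Kacimi-Alaoui's transverse Calabi--Yau solution rather than keeping the soliton terms fixed along the path --- is a harmless variant, so the proposal is correct and matches the paper's proof.
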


\begin{corollary}[\cite{FOW}]
Let $(S,g;\xi,\eta,\Phi)$ be a $(2m+1)$ dimensional compact toric Sasaki
manifold which satisfies $(14)$. If the integral invariant $f_\xi$
defined by $(16)$ identically vanishes, then there exists a $T^{m+1}$-invariant
real valued basic function $\varphi$ such that $(S,g_\varphi;\xi,\eta_\varphi,
\Phi)$ is a toric Sasaki-Einstein manifold.
\end{corollary}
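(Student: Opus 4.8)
The plan is to obtain this corollary as the special case $X=0$ of Theorem $5.21$. The key observation is that the hypothesis $f_\xi\equiv 0$ is exactly what makes the zero vector field an admissible choice in that theorem, and that a Sasaki--Ricci soliton whose vector field vanishes is automatically Sasaki--Einstein.

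First I would note that $X=0$ belongs to $\mathfrak{h}(S,\xi,\Phi)$ and is already normalized: its Hamiltonian function is identically zero, which trivially satisfies $(19)$, so $\theta_0=0$. Moreover, as was observed above, the generalized invariant $f_0$ attached to $X=0$ is a constant multiple of $f_\xi$; hence the assumption $f_\xi\equiv 0$ gives $f_0\equiv 0$.

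Next I would apply Theorem $5.21$ with this $X=0$. Since $(S,g;\xi,\eta,\Phi)$ is a compact toric Sasaki manifold satisfying $(14)$ and $f_0\equiv 0$, there is a $T^{m+1}$-invariant real-valued basic function $\varphi$ for which $(S,g_\varphi;\xi,\eta_\varphi,\Phi,0)$ is a Sasaki--Ricci soliton; concretely $\varphi$ solves the transverse Monge--Amp\`ere equation $(20)$ with $X=0$ and $\theta_X=0$, that is
$$\frac{\det(g_{i\bar j}^T+\varphi_{i\bar j})}{\det(g_{i\bar j}^T)}=\exp(-(2m+2)\varphi+h).$$

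Finally, reading off the soliton equation $(18)$ with $X=0$ gives $\rho^T_\varphi=(2m+2)\,\omega^T_\varphi$, i.e. the deformed transverse K\"ahler metric is K\"ahler--Einstein with Einstein constant $2m+2$; by Proposition $5.4$ this is equivalent to $g_\varphi$ being Sasaki--Einstein, and it is toric since $\varphi$ was chosen $T^{m+1}$-invariant. Essentially no obstacle remains at this stage: the analytic heart of the matter, namely the a priori $C^0$ estimate for $(20)$ in the toric setting obtained by a Wang--Zhu-type argument, is already packaged inside Theorem $5.21$. The only step requiring genuine care is the bookkeeping identity $f_0=(\mathrm{const})\cdot f_\xi$, since it is precisely this that turns the vanishing of $f_\xi$ into the admissibility of $X=0$ and thereby upgrades the soliton to an Einstein metric.
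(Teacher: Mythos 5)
Your proposal is correct and is essentially the derivation the paper intends: specialize Theorem $5.21$ to $X=0$ (admissible since $\theta_0=0$ trivially satisfies the normalization $(19)$ and $f_0$ is a constant multiple of $f_\xi$, hence vanishes), and then note that a Sasaki--Ricci soliton with vanishing vector field satisfies $\rho^T_\varphi=(2m+2)\omega^T_\varphi$, which is Sasaki--Einstein by Proposition $5.4$. No discrepancy with the paper's argument.
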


\subsection{Volume minimization}

By Corollary $5.22$, in toric Sasaki case, the integral invariant $f_\xi$ 
vanishes identically
for $\xi=\alpha^\#,\ \alpha\in \mathcal{C}_c^*$ if and only if 
there is a toric Sasaki-Einstein metric whose Reeb vector field is $\xi$.
In the present section, we see that there exists always
such a Reeb vector field by virtue of the ``volume minimization property",
which was introduced by Martelli, Sparks and Yau in \cite{MSY1}, \cite{MSY2}.
Note here that Wang and Zhu \cite{WangZhu} proved that a toric Fano manifold
admits a K\"ahler-Einstein metric if and only if the integral invariant $f$
defined by $(4)$ identically vanishes. However it is well-known that
there exist many toric Fano manifolds such that $f\not\equiv 0$.
For example, $\mathbb{C}P^2\#k\overline{\mathbb{C}P^2},\ k=1,2$ does not
admit K\"ahler-Einstein metric.

Let $S$ be a $(2m+1)$ dimensional compact manifold and $Riem(S)$ the set
of all Riemannian metrics on $S$. If $g_0\in Riem(S)$ is an Einstein metric
with Einstein constant $2m$, then $g_0$ is a critical point of
the Einstein-Hilbert functional
$$\mathcal{S}(g):=\int_S(s(g)+2m(1-2m))dvol_g,$$
where $s(g)$ and $dvol_g$ are the scalar curvature and the volume element
of $g$ respectively.
(See, for example, Chapter $4$ of \cite{Besse}.)
Therefore, if there exists a Sasaki-Einstein metric on $S$, it is a
critical point of the Einstein-Hilbert functional.
``Volume minimizing property" of Sasaki-Einstein metrics follows from
this fact and Proposition $5.24$ below.

Now, we would like to define an appropriate deformation space of Sasaki metrics
from the K\"ahler cone viewpoint. We have dealt with deformations
of Sasaki metrics by basic functions (Proposition $5.8$)
so far. Such deformations fix the Reeb vector field and the transverse
holomorphic structure.
They are suitable to investigate the transverse K\"ahler geometry.
However, for volume minimization,
it is essential to consider
deformations of Sasaki metrics which change Reeb vector fields.

Let $(S,g_0;\xi_0,\eta_0,\Phi_0)$ be a $(2m+1)$ dimensional compact
Sasaki manifold. Suppose that the cone $(C(S),\bar{g}_0,J)$ is a K\"ahler
manifold with $c_1(C(S))=0$. For instance, when $c_1^B(S,\Phi_0)=
(2m+2)[d\eta_0/2]_B$, this condition holds.
We denote by $T$ the maximal torus of the holomorphic isometry group
of $(C(S),\bar{g}_0,J)$.

\begin{definition}
Let $\bar{g}$ be a K\"ahler metric on the complex manifold $(C(S),J)$.
We call $\bar{g}$ a {\bf K\"ahler cone metric} if there exist a
Riemannian metric $g$ on $S$ and a diffeomorphism
$\Psi_{\bar{g}}:C(S)\to \mathbb{R}_+\times S$
such that $\bar{g}=\Psi_{\bar{g}}^*(ds^2+s^2g)$, where
$s$ is the standard coordinate of $\mathbb{R}_+$.
Then $g$ is a Sasaki metric on $S$.
\end{definition}

The Reeb vector field and the contact form of $\bar{g}$,
viewed as the vector field and
the $1$-form on $C(M)$ respectively, are
$$\Tilde{\xi}_{\bar{g}}=Jr_{\bar{g}}\frac{\partial}{\partial r_{\bar{g}}},\ \ 
\Tilde{\eta}_{\bar{g}}=\sqrt{-1}(\bar{\partial}-\partial)\log r_{\bar{g}}.$$
Here $r_{\bar{g}}=pr_1\circ \Psi_{\bar{g}}$, $pr_1:\mathbb{R}_+\times S\to
\mathbb{R}_+$ is the projection.

Then we denote by $KCM(C(S),J)$ the set of all K\"ahler cone metrics
on $(C(S),J)$ such that the maximal torus of the holomorphic isometry group
is $T$. By identifying $\bar{g}$ and $g$, we can regard
$KCM(C(S),J)$ as a deformation space of Sasaki metrics on $S$.
As in the case of toric Sasaki manifolds,
the image $\mathcal{C}$
of the moment map with respect to $\bar{g}\in KCM(C(S),J)$
is a rational convex polyhedral cone in $\mathfrak{t}^*$,
the dual of the Lie algebra $\mathfrak{t}$ of $T$.
Moreover we see that the Reeb vector field is generated
by an element in the interior of the dual cone $\mathcal{C}^*\subset
\mathfrak{t}$
of $\mathcal{C}$.
Especially, if the condition $(14)$ holds, 
such an element
is included in the interior of a convex polytope $\mathcal{C}_c^*$,
which is the intersection of
$\mathcal{C}^*$ with an affine hyperplane in $\mathfrak{t}$.
See Section $2.6$ of \cite{MSY2} or Proposition $6.8$ of \cite{FOW}.
So we put
$$KCM_c=\{\bar{g}\in KCM(C(S),J)\ |\ \Tilde{\xi}_{\bar{g}}\in \mathcal{C}_c^*
\}$$
and $KCM_c(\Tilde{\xi})=\{\bar{g}\in KCM_c\ |\ \Tilde{\xi}_{\bar{g}}=
\Tilde{\xi}\}$
for each $\Tilde{\xi}\in \mathcal{C}_c^*$. 
By Proposition $5.4$, if $\bar{g}\in KCM(C(S),J)$ is Ricci-flat,
in other words, $g$ is Sasaki-Einstein, then $\bar{g}\in KCM_c$.

If we restrict the Einstein-Hilbert functional $\mathcal{S}$
to $KCM_c$, it is proportional to the volume functional:
\begin{proposition}[\cite{MSY2}]
Let
$\bar{g}\in KCM_c$.
Then we have
\begin{equation}
\mathcal{S}(g)=4m\Vol(S,g).
\end{equation}
\end{proposition}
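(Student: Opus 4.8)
The plan is to compute both sides of the claimed identity
$\mathcal{S}(g) = 4m\,\Vol(S,g)$ directly, using the Sasaki–Einstein type relations that hold on $KCM_c$. The key observation is that an element $\bar g \in KCM_c$ is by definition normalized so that its Reeb field $\Tilde\xi_{\bar g}$ lies in the affine hyperplane cutting out $\mathcal{C}_c^*$; this normalization forces the transverse Ricci form to satisfy $[\rho^T]_B = (2m+2)[d\eta/2]_B$, i.e.\ equation $(14)$ holds for every metric in $KCM_c$. What is \emph{not} assumed is that $g$ itself is Einstein — only that this cohomological constraint is met — so the proof must extract a purely pointwise/integral statement from it.

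First I would express the scalar curvature $s(g)$ of the Sasaki metric in terms of the transverse scalar curvature $s^T$ of $g^T$. For a Sasaki metric one has the standard identity $s(g) = s^T - 2m$, which follows from the O'Neill formulas for the Riemannian submersion given locally by $\pi_\alpha : U_\alpha \to V_\alpha$ together with the fact that $\xi$ is a unit Killing field whose orbits are geodesics. Substituting this into the Einstein–Hilbert functional gives
\begin{equation*}
\mathcal{S}(g) = \int_S \bigl(s^T - 2m + 2m(1-2m)\bigr)\,dvol_g
= \int_S \bigl(s^T - 4m^2\bigr)\,dvol_g,
\end{equation*}
and $dvol_g = \tfrac1{m!}(d\eta/2)^m \wedge \eta$ up to the usual normalization. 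The next step is to integrate the transverse scalar curvature: $\int_S s^T\,(d\eta/2)^m\wedge\eta$ is, up to a dimensional constant, the pairing of the basic class $[\rho^T]_B$ with $[d\eta/2]_B^{m-1}\wedge[\eta]$, so by the constraint $[\rho^T]_B = (2m+2)[d\eta/2]_B$ it equals $(2m+2)$ times the analogous pairing of $[d\eta/2]_B^m$ with $[\eta]$, i.e.\ a fixed multiple of $\Vol(S,g)$. Concretely one gets $\int_S s^T\,(d\eta/2)^m\wedge\eta = 2m(2m+2)\Vol(S,g)$ after tracking the combinatorial factors, and combining with the $-4m^2$ term yields $\mathcal{S}(g) = \bigl(2m(2m+2) - 4m^2\bigr)\Vol(S,g) = 4m\,\Vol(S,g)$.

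The main obstacle is purely bookkeeping: getting the normalization constants right simultaneously in (i) the relation between $dvol_g$ and $(d\eta/2)^m\wedge\eta$, (ii) the Sasaki-to-transverse scalar curvature formula, and (iii) the Chern–Weil/integration-by-parts step that replaces $\rho^T$ by $(2m+2)\,d\eta/2$ inside the integral. Each of these is elementary but the factors of $2$, $m$, and $m!$ conspire, so the careful path is to fix one normalization convention (say $\Vol(S,g) = \tfrac1{m!}\int_S (d\eta/2)^m\wedge\eta$) at the outset and propagate it. I would also note that the argument is insensitive to whether $g$ is quasi-regular or irregular, since everything is phrased in terms of basic forms and their de Rham classes, which is exactly why the identity holds on all of $KCM_c$ and not merely at Einstein points — this is the feature that makes the subsequent volume-minimization argument work.
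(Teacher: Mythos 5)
The paper states this proposition without proof, simply citing \cite{MSY2}, and your argument is precisely the standard one from that source; it is correct. The identities $s(g)=s^T-2m$, $dvol_g=\tfrac1{m!}(d\eta/2)^m\wedge\eta$ and the substitution of $(2m+2)\,d\eta/2$ for $\rho^T$ inside $\int_S\rho^T\wedge(d\eta/2)^{m-1}\wedge\eta$ (legitimate because adding a $d_B$-exact basic form changes the integrand by an exact term plus a basic form of degree $2m+1$, which vanishes) give $\int_S s^T\,dvol_g=2m(2m+2)\Vol(S,g)$ and hence $\mathcal{S}(g)=4m\Vol(S,g)$, with the constants checking out. The one step you state too casually is that $\bar g\in KCM_c$ forces condition (14) ``by definition'': this is not definitional, but follows from the normalization of the Reeb field encoded in the affine hyperplane defining $\mathcal{C}_c^*$, namely that the holomorphic volume form of the cone has charge $m+1$ under $\widetilde{\xi}_{\bar g}$, together with the relation between the Ricci form of the cone metric and the transverse Ricci form; this is the content of Section 2.6 of \cite{MSY2} and Proposition 6.8 of \cite{FOW}, which the paper invokes just before introducing $KCM_c$, so it is an acceptable citation rather than a gap.
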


We have the following first variation formula of the volume functional
on $KCM(C(S),J)$.

\begin{proposition}[\cite{MSY2}]
Let
$\{\overline{g_t}\}_{-\varepsilon<0<\varepsilon}$
be a $1$-parameter family of K\"ahler cone metrics in $KCM(C(S),J)$.
Then

\begin{equation}
\frac{d}{dt}\Vol(S,g_t)_{|t=0}=-(m+1)\int_S\eta(X)dvol
\end{equation}
holds. Here $\eta$ and $dvol$ are the contact form and the volume element
of $g_0$ respectively and $\mathfrak{t}\ni X=d\xi_t/dt_{|t=0}$.
\end{proposition}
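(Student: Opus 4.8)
The plan is to transport the volume integral up to the K\"ahler cone $C(S)$, where the radial structure trivialises the computation, and to differentiate there. Throughout, abbreviate $r_t=r_{\overline{g_t}}$, $\widetilde\eta_t=\widetilde\eta_{\overline{g_t}}$, $\widetilde\xi_t=\widetilde\xi_{\overline{g_t}}$, and let $E_t=r_t\frac{\partial}{\partial r_t}$ be the Euler field of $\overline{g_t}$. First I would record, for any K\"ahler cone metric $\overline g$, the identity $\Vol(S,g)=\frac{1}{2^m m!}\int_S\eta\wedge(d\eta)^m$, which follows by expanding $\omega_{\overline g}^{m+1}/(m+1)!=\bigl(r^{2m+1}/2^m m!\bigr)\,dr\wedge\widetilde\eta\wedge(d\widetilde\eta)^m$ and comparing with $dvol_{\overline g}=r^{2m+1}\,dr\wedge dvol_g$. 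Two observations then make this usable along the family $\{\overline{g_t}\}$. First, the $(2m+1)$-form $\widetilde\eta_t\wedge(d\widetilde\eta_t)^m$ is closed on $C(S)$: since $\widetilde\xi_t$ is Killing, $E_t$ is homothetic, and both preserve $\widetilde\eta_t$, we get $i(\widetilde\xi_t)d\widetilde\eta_t=i(E_t)d\widetilde\eta_t=0$, so $d\widetilde\eta_t$ has rank at most $2m$ and hence $d(\widetilde\eta_t\wedge(d\widetilde\eta_t)^m)=(d\widetilde\eta_t)^{m+1}=0$. Second, the link $\{r_t=1\}$ is homologous in $C(S)$ to the fixed hypersurface $N_0:=\{r_0=1\}\simeq S$. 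Together these give a representation with $t$-independent domain,
$$\Vol(S,g_t)=\frac{1}{2^m m!}\int_{N_0}\widetilde\eta_t\wedge(d\widetilde\eta_t)^m .$$

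Next I would differentiate under this now-fixed integral. Writing $\dot\eta:=\frac{d}{dt}\widetilde\eta_t\big|_{t=0}$, the Leibniz rule together with $d\bigl(\widetilde\eta_0\wedge\dot\eta\wedge(d\widetilde\eta_0)^{m-1}\bigr)=\dot\eta\wedge(d\widetilde\eta_0)^m-\widetilde\eta_0\wedge d\dot\eta\wedge(d\widetilde\eta_0)^{m-1}$ gives, once the exact term is dropped over the closed manifold $N_0$,
$$\frac{d}{dt}\Big|_{0}\Vol(S,g_t)=\frac{m+1}{2^m m!}\int_{N_0}\dot\eta\wedge(d\widetilde\eta_0)^m .$$
On $N_0\simeq S$ I use the contact splitting $TS=\mathbb{R}\xi_0\oplus\Ker\eta_0$: for any one-form $\beta$ on $S$ the form $\beta-\beta(\xi_0)\eta_0$ is annihilated by $i(\xi_0)$, so $\bigl(\beta-\beta(\xi_0)\eta_0\bigr)\wedge(d\eta_0)^m$ is a top-degree form annihilated by the nowhere-vanishing $\xi_0$, hence vanishes; therefore $\beta\wedge(d\eta_0)^m=\beta(\xi_0)\,\eta_0\wedge(d\eta_0)^m$. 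Applying this with $\beta=\dot\eta$, and using that $\widetilde\xi_0$ is tangent to $N_0$ and restricts there to $\xi_0$, reduces the integrand to $\dot\eta(\widetilde\xi_0)\,\eta_0\wedge(d\eta_0)^m$.

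The last ingredient is the crux, and is essentially trivial: the normalisation $\widetilde\eta_t(\widetilde\xi_t)\equiv 1$ holds identically on $C(S)$ for every $t$, so differentiating at $t=0$ yields $\dot\eta(\widetilde\xi_0)+\widetilde\eta_0(X)=0$, that is $\dot\eta(\widetilde\xi_0)=-\eta(X)$ along $S$ (as $\widetilde\eta_0$ kills $\frac{\partial}{\partial r_0}$, only the $S$-tangential part of $X$ enters, and that is precisely the field on $S$ generated by $X\in\mathfrak{t}$). Substituting, and recognising $\frac{1}{2^m m!}\eta_0\wedge(d\eta_0)^m$ as the Riemannian volume element $dvol$ of $g_0$, gives $\frac{d}{dt}\big|_{0}\Vol(S,g_t)=-(m+1)\int_S\eta(X)\,dvol$, as asserted. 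The only points needing genuine care are the passage from the moving link $\{r_t=1\}$ to the fixed hypersurface $N_0$, legitimised by the closedness of $\widetilde\eta_t\wedge(d\widetilde\eta_t)^m$ and the evident homology, together with the consistent bookkeeping between forms on $C(S)$ and their restrictions to $N_0\simeq S$; once these are settled the computation is forced, the whole geometric content lying in the one-line differentiation of $\widetilde\eta_t(\widetilde\xi_t)\equiv1$.
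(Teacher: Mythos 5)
Your argument is correct. Note that the paper itself gives no proof of this proposition; it is quoted from \cite{MSY2} (see also Section 6 of \cite{FOW}), so there is no in-text argument to compare against. Your route --- rewriting $\Vol(S,g_t)=\frac{1}{2^m m!}\int_{\{r_t=1\}}\widetilde\eta_t\wedge(d\widetilde\eta_t)^m$, using $i(E_t)d\widetilde\eta_t=i(\widetilde\xi_t)d\widetilde\eta_t=0$ to get closedness of $\widetilde\eta_t\wedge(d\widetilde\eta_t)^m$ and hence a $t$-independent domain of integration, then differentiating and exploiting $\beta\wedge(d\eta_0)^m=\beta(\xi_0)\,\eta_0\wedge(d\eta_0)^m$ together with the derivative of the normalization $\widetilde\eta_t(\widetilde\xi_t)\equiv 1$ --- is a clean, self-contained derivation in the same spirit as the computation in the cited sources, which work with the volume of the cone region $\{r\le 1\}$ and variations of the Reeb field; the two are essentially equivalent via $\omega^{m+1}/(m+1)!=\frac{r^{2m+1}}{2^m m!}\,dr\wedge\widetilde\eta\wedge(d\widetilde\eta)^m$. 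The only steps you treat briefly (the homology of the two links in $C(S)\simeq\mathbb{R}_+\times S$, the identification of $d\xi_t/dt|_{t=0}$ with the induced vector field $X^\#$ by linearity of $a\mapsto a^\#$, and the fact that only the $S$-tangential part of $X^\#$ enters $\widetilde\eta_0(X^\#)$) are all sound, so I see no gap.
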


\begin{corollary}
The volume functional on $KCM_c(\Tilde{\xi})$ is constant for each
$\Tilde{\xi}\in \mathcal{C}_c^*$. 
\end{corollary}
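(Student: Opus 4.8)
The plan is to read the statement off directly from the first variation formula of Proposition 5.25. The key observation is that a $1$-parameter family of K\"ahler cone metrics lying entirely in $KCM_c(\widetilde{\xi})$ has, by definition, a constant Reeb vector field, so the infinitesimal generator $X$ (the variation of the Reeb vector field) appearing on the right-hand side of that formula vanishes identically, and hence $\frac{d}{dt}\Vol(S,g_t) \equiv 0$ along the family. The only remaining work is to verify that $KCM_c(\widetilde{\xi})$ is path-connected, so that any two of its elements can be joined by such a family.

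In more detail I would argue as follows. Fix $\widetilde{\xi} \in \mathcal{C}_c^*$ and choose $\bar g_0, \bar g_1 \in KCM_c(\widetilde{\xi})$. Since the complex manifold $(C(S),J)$ is fixed and $\widetilde{\xi}_{\bar g_0} = \widetilde{\xi}_{\bar g_1} = \widetilde{\xi}$, the two metrics determine the same Reeb foliation and the same transverse holomorphic structure $\Phi$, namely the one induced by the leaves of the holomorphic flow of $\widetilde{\xi} - \sqrt{-1}J\widetilde{\xi}$; by the transverse $\partial_B\bar\partial_B$-lemma their Sasaki structures then differ by a basic potential $\varphi$ as in Proposition 5.8. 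Connecting $\bar g_0$ to $\bar g_1$ by the path $\bar g_t$ associated with the basic function $t\varphi$, $t\in[0,1]$, the positivity of the transverse form on $\{\xi\}^{\perp}$ is preserved along this segment and a deformation by a basic function leaves the Reeb vector field unchanged, so $\widetilde{\xi}_{\bar g_t} \equiv \widetilde{\xi} \in \mathcal{C}_c^*$ and the path stays inside $KCM_c(\widetilde{\xi})$. Applying Proposition 5.25 with base point an arbitrary $t = s$, the relevant infinitesimal generator is the variation $X$ of the Reeb vector field, which is $0$; hence $\frac{d}{dt}\Vol(S,g_t)_{|t=s} = -(m+1)\int_S \eta_s(X)\,dvol = 0$, and integrating over $s \in [0,1]$ yields $\Vol(S,g_0) = \Vol(S,g_1)$. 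Equivalently, Proposition 5.25 says that the differential of $\Vol$ vanishes at every point of $KCM_c(\widetilde{\xi})$, so $\Vol$ is locally constant there, and local constancy together with connectedness finishes the proof; by Proposition 5.24 the same conclusion holds verbatim for the Einstein-Hilbert functional $\mathcal{S}$.

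The genuinely delicate step, and the one I expect to require the most care, is the reduction of an arbitrary element of $KCM_c(\widetilde{\xi})$ to a basic-potential deformation of a fixed reference metric, i.e. the assertion that two K\"ahler cone metrics on $(C(S),J)$ with the same Reeb vector field lie in the same transverse K\"ahler class $[d\eta/2]_B$ and hence, by transverse Hodge theory, differ by a basic function; this is exactly what makes $KCM_c(\widetilde{\xi})$ path-connected (and is where one also checks that the maximal-torus condition defining $KCM(C(S),J)$ is preserved along the path, after averaging $\varphi$ over $T$ if necessary). Once this reduction is in place, nothing beyond the vanishing of the Reeb-variation $X$ along paths in $KCM_c(\widetilde{\xi})$, combined with Proposition 5.25, is needed.
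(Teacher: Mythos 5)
Your argument is exactly the one the paper intends: the corollary is stated as an immediate consequence of the first variation formula (Proposition 5.25), since along any family in $KCM_c(\Tilde{\xi})$ the Reeb vector field is fixed, so $X=0$ and the volume is constant. Your additional verification that $KCM_c(\Tilde{\xi})$ is path-connected via basic potentials (line segments of transverse K\"ahler potentials, positivity being preserved by convexity) is correct and fills in a step the paper leaves implicit.
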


Hence, by Lemma $5.24$ and Corollary $5.26$,
$\mathcal{S}_{|KCM_c}$ is reduced to a function on $\mathcal{C}_c^*$
and we denote it by $\Tilde{\mathcal{S}}:\mathcal{C}_c^*
\to \mathbb{R}$. By the second variation formula of the volume function,
see \cite{MSY2} or \cite{FOW}, we get the following theorem.

\begin{theorem}[Volume minimization of Sasaki-Einstein metric, \cite{MSY2}]
If $\bar{g}\in KCM_c$ is Ricci-flat, then the Reeb vector field
$\Tilde{\xi}_{\bar{g}}\in \mathcal{C}_c^*$ is the unique minimum point
of $\Tilde{\mathcal{S}}$.
\end{theorem}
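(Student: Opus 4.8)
The plan is to show that $\widetilde{\mathcal S}$ is a strictly convex function on the (relatively) open convex polytope $\mathcal C_c^*$, so that it has at most one critical point and, if a critical point exists, it is automatically the unique global minimum; and then to check that the Reeb vector field of a Ricci-flat cone metric is exactly such a critical point. As a preliminary one notes that Proposition $5.24$ together with Corollary $5.26$ imply that $\mathcal S|_{KCM_c}$ depends only on the Reeb vector field, so it genuinely descends to $\widetilde{\mathcal S}\colon\mathcal C_c^*\to\mathbb R$ with $\widetilde{\mathcal S}(\widetilde\xi)=4m\,\Vol(S,g)$ for any $\bar g\in KCM_c(\widetilde\xi)$; and that, by Proposition $5.4$ and the discussion preceding Proposition $5.24$, a Ricci-flat $\bar g\in KCM_c$ has $\widetilde\xi_{\bar g}\in\mathcal C_c^*$.

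First I would record the criticality. If $\bar g\in KCM_c$ is Ricci-flat, then by Proposition $5.4$ the Sasaki metric $g$ is Einstein with Einstein constant $2m$, hence a critical point of the Einstein--Hilbert functional $\mathcal S$ on all of $Riem(S)$ (as recalled before Proposition $5.24$); in particular its first variation vanishes along every variation of $g$ staying inside $KCM_c$. Taking a one-parameter family $\{\overline{g_t}\}$ in $KCM_c$ through $\bar g$ with $X=d\xi_t/dt|_{t=0}\in\mathfrak t$ and invoking Proposition $5.25$, this forces $\int_S\eta(X)\,dvol=0$ for every such $X$; since $\widetilde{\mathcal S}=4m\,\Vol$ along $KCM_c$, this says precisely that the differential of $\widetilde{\mathcal S}$ at $\widetilde\xi_{\bar g}$ vanishes, i.e. $\widetilde\xi_{\bar g}$ is a critical point of $\widetilde{\mathcal S}$.

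Next I would prove strict convexity, by differentiating the first variation formula of Proposition $5.25$ a second time --- this is the second variation formula of Martelli--Sparks--Yau, see \cite{MSY2} and also \cite{FOW}. The computation shows that the Hessian of $\widetilde{\mathcal S}$ at any $\widetilde\xi\in\mathcal C_c^*$ is positive definite; concretely it evaluates on $(X,X)$ as an integral over $S$ of a nonnegative quadratic integrand in $\eta(X)$, which is nondegenerate because the torus action is effective, so that $\eta(X)\equiv 0$ forces $X=0$. Hence $\widetilde{\mathcal S}$ is strictly convex on $\mathcal C_c^*$, so it has at most one critical point, and --- being convex --- that critical point is its global minimum; combined with the previous paragraph, $\widetilde\xi_{\bar g}$ is the unique minimum point of $\widetilde{\mathcal S}$. (One also has properness, $\widetilde{\mathcal S}\to\infty$ as $\widetilde\xi\to\partial\mathcal C_c^*$, because the volume of the link diverges as the Reeb field approaches a facet of the Reeb cone, but this is not needed once a critical point has been exhibited.)

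The step I expect to be the main obstacle is the second variation computation establishing positivity of the Hessian; once that is available, the criticality of the Einstein metric and the reduction of $\mathcal S|_{KCM_c}$ to a function of $\widetilde\xi$ are essentially formal consequences of Propositions $5.4$, $5.24$, $5.25$ and Corollary $5.26$. An alternative that sidesteps the Riemannian Hessian is to use the explicit toric-type formula expressing $\Vol(\widetilde\xi)$, up to a dimensional constant, as an integral over the cone $\mathcal C$ of a weight with simple poles along the facets of $\mathcal C^*$; strict convexity and properness on $\mathcal C_c^*$ can then be read off directly, which is essentially the approach of \cite{MSY2}.
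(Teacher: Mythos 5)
Your proposal is correct and follows essentially the same route as the paper: criticality of the Sasaki-Einstein metric for the Einstein--Hilbert functional, the reduction of $\mathcal{S}|_{KCM_c}$ to $\widetilde{\mathcal{S}}$ on $\mathcal{C}_c^*$ via Propositions $5.24$--$5.25$ and Corollary $5.26$, and strict convexity from the second variation formula of \cite{MSY2}, \cite{FOW} (the paper likewise defers that computation to those references). Your closing remark on the explicit toric volume formula matches the approach of \cite{MSY1}, \cite{MSY2} as well.
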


\begin{example}
Let 
$(S,g_0;\xi_0,\eta_0,\Phi_0)$
be a $(2m+1)$-dimensional compact toric Sasaki manifold
satisfying the condition $(14)$.
Then $KCM_c$ consists of toric K\"aher cone metrics on $(C(S),J)$
such that the corresponding Sasaki metric satisfies $(14)$.
In this case, as we saw in Section $5.3$, the image $\mathcal{C}$
of the moment map is a toric diagram of height $l$ in
$\mathbb{R}^{m+1}$. We also see that
$\Tilde{\mathcal{S}}$ is 
$$\Tilde{\mathcal{S}}(\Tilde{\xi})=8m(m+1)(2\pi)^{m+1}
\Vol(\Delta(\Tilde{\xi})),$$
see \cite{MSY1}. Here 
$\Delta(\Tilde{\xi})=\{x\in \mathcal{C}\ |\ \Tilde{\xi}\cdot x\le 1\}$
and $\Vol(\Delta(\Tilde{\xi}))$ is the Euclidean volume of 
$\Delta(\Tilde{\xi})$.
This function is convex and proper on the interior of the $m$ dimensional
convex polytope $\mathcal{C}_c^*$.
Therefore $\Tilde{\mathcal{S}}$ has the unique minimizer $\Tilde{\xi}_{min}\in 
\mathcal{C}_c^*$.
\end{example}

Now, Theorem $5.27$ gives a necessary condition for existence of a
Ricci-flat K\"ahler cone metric in $KCM_c(\Tilde{\xi})$.
Actually, it is reduced to vanishing of the integral invariant
$f_{\Tilde{\xi}}$ by the following theorem.

\begin{theorem}[\cite{MSY2}, \cite{FOW}]
The first variation $d_{\Tilde{\xi}}\Tilde{\mathcal{S}}$ of
$\Tilde{\mathcal{S}}$
at $\Tilde{\xi}\in \mathcal{C}_c^*$ equals to
$-\sqrt{-1}f_{\Tilde{\xi}}$.
\end{theorem}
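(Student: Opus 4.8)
The plan is to reduce the statement to the first variation formula for the volume, Proposition~5.25, and then to identify the resulting integral with $f_{\widetilde{\xi}}$ by an integration by parts.

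First I would observe that $\widetilde{\mathcal{S}}$ is, up to the constant $4m$, just the volume. By Proposition~5.24 one has $\mathcal{S}(g)=4m\,\Vol(S,g)$ for every $\bar g\in KCM_c$, and by Corollary~5.26 the volume $\Vol(S,g)$ depends only on the Reeb field $\widetilde{\xi}_{\bar g}$; hence $\widetilde{\mathcal{S}}(\widetilde{\xi})=4m\,\Vol(S,g)$ for any $\bar g\in KCM_c$ with $\widetilde{\xi}_{\bar g}=\widetilde{\xi}$. Now fix $\widetilde{\xi}\in\mathcal{C}_c^*$ and a tangent vector $X\in\mathfrak{t}$ to $\mathcal{C}_c^*$ at $\widetilde{\xi}$, and choose a smooth path $\{\bar g_t\}$ in $KCM_c$ through $\bar g_0$ with $\frac{d}{dt}\widetilde{\xi}_{\bar g_t}|_{t=0}=X$ (possible by the Delzant-type construction). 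Proposition~5.25 then gives
\begin{equation*}
d_{\widetilde{\xi}}\widetilde{\mathcal{S}}(X)=4m\,\frac{d}{dt}\Vol(S,g_t)\Big|_{t=0}=-4m(m+1)\int_S\eta(X)\,dvol,
\end{equation*}
with $\eta$ and $dvol$ those of $g_0$. So everything comes down to the identity $-4m(m+1)\int_S\eta(X)\,dvol=-\sqrt{-1}\,f_{\widetilde{\xi}}(X)$.

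For this identification I would argue as in the proof of Theorem~2.7 --- the Sasakian analogue of the passage in Section~4 from the equivariant Riemann--Roch coefficients to $f_1$. Write $dvol=\frac{1}{m!}(d\eta/2)^m\wedge\eta=\frac{1}{m!}(\omega^T)^m\wedge\eta$, and recall from Definition~5.10 that the transverse holomorphic Hamiltonian vector field determined by $X$ has Hamiltonian $u_X=\sqrt{-1}\eta(X)/2$ whose $d\pi_\alpha$-image on each chart $V_\alpha$ is $\mathrm{grad}^{1,0}_B u_X$; by $(16)$, $f_{\widetilde{\xi}}(X)=\int_S Xh\,(\omega^T)^m\wedge\eta$. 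Since $h$, $\eta(X)$ and the $\omega^T$'s are all basic, an integration by parts carried out leaf-wise on the charts $V_\alpha$ (the boundary terms cancelling over the compact $S$) moves the derivative off $h$ and brings in the transverse Laplacian; invoking $(15)$, $\rho^T-(m+1)\,d\eta=\sqrt{-1}\,\partial_B\bar\partial_B h$, rewrites $\Delta_B h$ in terms of the transverse Ricci form, and the resulting $\rho^T$-weighted integral collapses to a multiple of $\int_S\eta(X)\,dvol$ using the normalization $[\rho^T]_B=(m+1)[d\eta]_B$ that is built into $KCM_c$ (equation $(14)$, Theorem~5.16) together with the fact that $X$ is tangent to the affine slice $\mathcal{C}_c^*$. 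Keeping track of the numerical constants ($\sqrt{-1}$, $2$, $m$, $m+1$) then yields the asserted equality.

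The main obstacle is precisely this last step. One must be careful with the dictionary between the real Reeb-variation vector $X\in\mathfrak{t}$ of Proposition~5.25 and the complex, type-$(1,0)$ transverse holomorphic Hamiltonian vector field on which $f_{\widetilde{\xi}}$ is evaluated, and --- more substantially --- with the way the constraint that $X$ be tangent to the affine slice $\mathcal{C}_c^*$ (a slice that exists precisely because of the condition $c_1(D)=0$, i.e.\ the toric-diagram-of-height-$\ell$ condition) forces the extra transverse-curvature term to reduce to a pure multiple of the volume rather than to a genuine Futaki-type integral. Conceptually this is the Sasakian shadow of the Duistermaat--Heckman relation between the equivariant volume of the K\"ahler cone and the Futaki invariant; in the toric situation of Example~5.28, where $\widetilde{\mathcal{S}}(\widetilde{\xi})$ is a constant multiple of $\Vol(\Delta(\widetilde{\xi}))$, the identity becomes an elementary differentiation of the Euclidean volume of $\{x\in\mathcal{C}:\widetilde{\xi}\cdot x\le 1\}$, which serves as a useful check. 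The analytic ingredients --- integration by parts in the transverse K\"ahler structure and El Kacimi-Aloe's Hodge theory producing $h$ --- are routine.
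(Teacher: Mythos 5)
Your first paragraph (the reduction via Propositions $5.24$, $5.25$ and Corollary $5.26$ to the identity $4m(m+1)\int_S\eta(X)\,dvol=\sqrt{-1}f_{\widetilde{\xi}}(X)$ for $X$ tangent to $\mathcal{C}_c^*$) is correct and is indeed the natural first move; note that the paper states the theorem without proof, citing \cite{MSY2}, \cite{FOW}, so everything rests on your second step. That step is the entire content of the theorem, and the mechanism you sketch for it fails. Integrating $f_{\widetilde{\xi}}(X)=\int_S Xh\,(d\eta/2)^m\wedge\eta$ by parts turns it into an integral of $u_X$ against $\Delta_Bh$, i.e.\ against $\rho^T\wedge(\omega^T)^{m-1}\wedge\eta$ minus its ``Einstein'' part; you then propose to replace $\rho^T$ by $(m+1)\,d\eta$ ``using the normalization $[\rho^T]_B=(m+1)[d\eta]_B$''. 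But the weight $u_X\,(\omega^T)^{m-1}\wedge\eta$ is not closed, so basic-cohomologous forms cannot be interchanged under this pairing: the discrepancy is exactly $\sqrt{-1}\partial_B\bar{\partial}_Bh$ from $(15)$, and integrating it by parts returns you to $\int_S Xh\,dvol$. The manipulation is circular and never produces the $h$-free quantity $\int_S\eta(X)\,dvol$; tangency of $X$ to the affine slice does not rescue it, since that condition only controls the component of $X$ along $\xi$ (on which $f_{\widetilde{\xi}}$ vanishes but $\int_S\eta(\cdot)\,dvol$ does not).

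What is actually needed, and what the cited proofs use, is extra structure special to the class $(14)$: for a transversely holomorphic Hamiltonian field $X$ one has a soliton-type identity of the form $\Delta_B\theta_X+Xh+(2m+2)\theta_X=\mathrm{const}$ (a consequence of $(15)$ and transverse holomorphy), and multiplying by $e^h$ and integrating fixes the constant via the $e^h$-weighted normalization $(19)$; this converts $\int_S Xh\,dvol$ into $-\int_S\theta_X\,dvol$ up to an explicitly controlled $e^h$-weighted correction, which is then matched with the volume derivative. Equivalently, in \cite{MSY2} and \cite{FOW} the computation is carried out on the cone using the (multi-valued) holomorphic volume form $\Omega$ coming from the triviality of $K_{C(S)}^{\otimes l}$, with $L_{\widetilde{\xi}}\Omega=\sqrt{-1}(m+1)\Omega$ precisely because $\widetilde{\xi}\in\mathcal{C}_c^*$; the relation between $e^h\,dvol$ and $\Omega\wedge\overline{\Omega}$ is what ties the Ricci potential $h$ to the volume and its first variation as the Reeb field moves with $J$ fixed. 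None of this appears in your sketch, and your proposed toric sanity check only verifies the volume side (Proposition $5.25$), not the identification with $f_{\widetilde{\xi}}$, which in the toric case is again the nontrivial computation. The dictionary $u_X=\sqrt{-1}\eta(X)/2$ accounting for the factor $-\sqrt{-1}$ is the easy part; the missing idea is the passage from $\int_S Xh\,dvol$ to $\int_S\eta(X)\,dvol$, which requires the identity above (or the $\Omega$-argument), not Stokes plus a cohomological substitution.
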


Hence we see the following existence results of toric Sasaki-Einstein
metrics, by Corollary $5.22$ and Theorem $5.29$.

\begin{theorem}[\cite{FOW}]
Let $(S,g;\xi,\eta,\Phi)$ be a $(2m+1)$ dimensional compact toric
Sasaki manifold satisfying $(14)$ and $\Tilde{\xi}_{min}\in \mathcal{C}_c^*$
the minimizer of $\Tilde{\mathcal{S}}$.
Then there exists a Ricci-flat K\"ahler cone metric $\overline{g_{SE}}$
in $KCM_c(\Tilde{\xi}_{min})$. The corresponding Riemannian metric
$g_{SE}$ is a toric Sasaki-Einstein metric on $S$.
On the other hand, there is no Ricci-flat K\"ahler cone metric
in $KCM_c(\Tilde{\xi})$ if $\Tilde{\xi}\not=\Tilde{\xi}_{min}$.
That is to say, there exists no Sasaki-Einstein metric with the Reeb vector
field $\Tilde{\xi}_{\bar{g}}\not=\Tilde{\xi}_{min}$.
\end{theorem}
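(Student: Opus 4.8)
The plan is to deduce Theorem~$5.30$ by assembling four facts already established: the strict convexity and properness of $\widetilde{\mathcal S}$ on the polytope $\mathcal C_c^*$ (Example~$5.28$, where $\widetilde{\mathcal S}(\widetilde\xi)=8m(m+1)(2\pi)^{m+1}\Vol(\Delta(\widetilde\xi))$), the identification $d_{\widetilde\xi}\widetilde{\mathcal S}=-\sqrt{-1}\,f_{\widetilde\xi}$ of its first variation with the integral invariant (Theorem~$5.29$), the solvability of the transverse Monge--Amp\`ere equation once that invariant vanishes (Corollary~$5.22$), and the volume-minimization characterization of the Reeb field of a Ricci-flat K\"ahler cone metric (Theorem~$5.27$).

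First I would produce the metric. By Example~$5.28$ the convex proper function $\widetilde{\mathcal S}$ attains its minimum at a unique interior point $\widetilde\xi_{min}\in\mathcal C_c^*$, and there $d_{\widetilde\xi_{min}}\widetilde{\mathcal S}=0$. By Theorem~$5.29$ this vanishing says that $f_{\widetilde\xi_{min}}$ is zero along the directions of $\mathcal C_c^*$, i.e.\ along the torus $\mathfrak t$ (the normalization $(16)$ disposing of the $\widetilde\xi$-direction). I would then promote this to $f_{\widetilde\xi_{min}}\equiv0$ on the whole algebra $\mathfrak h(S,\widetilde\xi_{min},\Phi)$: by Theorem~$5.11$, $f_{\widetilde\xi_{min}}$ is a Lie-algebra character, so it annihilates every nonzero weight space of $\mathfrak h(S,\widetilde\xi_{min},\Phi)$ under the (maximal) torus action, and the zero-weight space is the torus itself; hence the character is determined by its restriction to $\mathfrak t$. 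Realizing $\widetilde\xi_{min}\in\mathcal C_c^*$ as the Reeb field of a toric Sasaki structure on $S$ (which still satisfies $(14)$, $\widetilde\xi_{min}$ lying in $\mathcal C_c^*$; cf.\ Lemma~$5.14$ and Theorem~$5.16$), Corollary~$5.22$ now supplies a $T^{m+1}$-invariant real basic function $\varphi$ making the deformed structure $(S,g_\varphi;\widetilde\xi_{min},\eta_\varphi,\Phi)$ toric Sasaki-Einstein; passing to the cone gives the Ricci-flat K\"ahler cone metric $\overline{g_{SE}}\in KCM_c(\widetilde\xi_{min})$ and its associated Sasaki-Einstein metric $g_{SE}$ on $S$.

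Next I would rule out Sasaki-Einstein metrics with any other Reeb field. If $\overline{g}\in KCM_c(\widetilde\xi)$ were Ricci-flat with $\widetilde\xi\neq\widetilde\xi_{min}$, then Theorem~$5.27$ would force its Reeb field $\widetilde\xi_{\overline g}=\widetilde\xi$ to be the unique minimum point of $\widetilde{\mathcal S}$ on $\mathcal C_c^*$, i.e.\ $\widetilde\xi=\widetilde\xi_{min}$ --- a contradiction. Equivalently, by Theorem~$5.29$ together with the strict convexity of Example~$5.28$, the invariant $f_{\widetilde\xi}$ vanishes only at $\widetilde\xi_{min}$, whereas its vanishing is necessary for a transverse K\"ahler-Einstein metric with Reeb field $\widetilde\xi$. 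Thus $KCM_c(\widetilde\xi)$ contains no Ricci-flat metric for $\widetilde\xi\neq\widetilde\xi_{min}$, which is the non-existence assertion.

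The deep analytic input --- the continuity method and the Wang--Zhu-type a priori $C^0$ estimate behind Corollary~$5.22$, and the second-variation computation that makes $\widetilde{\mathcal S}$ convex in Example~$5.28$ --- is already granted, so the argument is essentially a matter of stitching. The one step I expect to require genuine care is the passage from ``$d\widetilde{\mathcal S}=0$ at $\widetilde\xi_{min}$'' to ``$f_{\widetilde\xi_{min}}\equiv0$ on all of $\mathfrak h(S,\widetilde\xi_{min},\Phi)$'': Theorem~$5.29$ only directly controls the torus directions, and it is the toricity of $S$ --- via the character property of $f_{\widetilde\xi}$ and maximality of the torus --- that closes the gap to the exact hypothesis Corollary~$5.22$ needs. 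Once that is settled, Example~$5.28\Rightarrow$ Theorem~$5.29\Rightarrow$ Corollary~$5.22$ yields existence, and Theorem~$5.27$ yields uniqueness of the admissible Reeb field.
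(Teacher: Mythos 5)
Your proposal is correct and follows essentially the same route as the paper, which deduces the theorem by combining Corollary $5.22$ and Theorem $5.29$ (via the convexity and properness of $\Tilde{\mathcal{S}}$ in Example $5.28$) for existence at $\Tilde{\xi}_{min}$, and Theorem $5.27$ for the non-existence at any other Reeb field. Your extra step promoting the vanishing of $f_{\Tilde{\xi}_{min}}$ from the torus directions to all of $\mathfrak{h}(S,\xi,\Phi)$ --- using the character property of Theorem $5.11$ together with the fact that in the toric case the zero-weight space is $\mathfrak{t}\otimes\mathbb{C}$ --- correctly fills in a detail the exposition leaves implicit.
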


\begin{example}
For the integral vectors
$$\lambda_1=(1,0,0),\ \lambda_2=(1,1,0),\ \lambda_3=(1,2,1),\ 
\lambda_4=(1,1,2),\ \lambda_5=(1,0,1),$$
the rational convex polyhedral cone
$\{x\in \mathbb{R}^3\ |\ x\cdot \lambda_i\ge 0,\ i=1,2,3,4,5\}$
in $\mathbb{R}^3$ is a $3$-dimensional toric diagram of height $1$, by
Lemma $5.18$.
Then the toric variety minus the apex $(C(S),J)$ given by Delzant construction
is $K_{\mathbb{C}P^2 \# 2\overline{\mathbb{C}P^2}}\setminus 
(\text{zero section})$, where $K_{\mathbb{C}P^2 \# 2\overline{\mathbb{C}P^2}}$
is the canonical bundle of the two-point blow-up of the
complex projective plane
$\mathbb{C}P^2 \# 2\overline{\mathbb{C}P^2}$.
By the computation of \cite{MSY1},
$$\Tilde{\xi}_{min}=(3,\frac{9}{16}(-1+\sqrt{33}),\frac{9}{16}(-1+\sqrt{33})).
$$

Therefore, in this case, there is an irregular toric Sasaki-Einstein
metric on the associated $S^1$-bundle $S(K_{\mathbb{C}P^2 \# 2
\overline{\mathbb{C}P^2}})$.
Note here that 
$\mathbb{C}P^2 \# 2\overline{\mathbb{C}P^2}$ does not admit
K\"ahler-Einstein metric, since the regular Reeb vector
is $(3,3,3)\not=\Tilde{\xi}_{min}$.
\end{example}


When $m=2$, we can get the following result
as an application of
Theorem $5.30$.

\begin{theorem}[\cite{CFO}]
For each positive integer $k$ there exists an infinite family of
inequivalent toric Sasaki-Einstein metrics on the $k$-fold connected
sum $\#k(S^2\times S^3)$ of $S^2\times S^3$.
\end{theorem}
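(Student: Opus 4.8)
The plan is to combine the existence theorem for toric Sasaki–Einstein metrics (Theorem $5.30$) with the Smale–Barden classification of simply connected five–manifolds: for each $k$ I will exhibit an infinite family of pairwise non-isomorphic toric Calabi–Yau cones $C(S)$ of complex dimension three whose links are all diffeomorphic to $\#k(S^2\times S^3)$, and then invoke Theorem $5.30$ on each. For $m=2$, a compact toric Sasaki five–manifold satisfying condition $(14)$ is encoded by a good lattice polygon $\Delta\subset\bfR^2$ with, say, $d$ vertices $v_1=(p_1,q_1),\dots,v_d=(p_d,q_d)$, goodness being the explicit arithmetic condition on consecutive vertices recalled in the $m=2$ example above. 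The first step is to produce, for each fixed $k$, infinitely many such polygons with $d=k+3$ vertices that are pairwise inequivalent under the affine lattice group $GL(2,\bfZ)\ltimes\bfZ^2$; one can start from the infinite family of good quadrilaterals underlying the $L^{a,b,c}$ spaces (the $k=1$ case, recovering the Gauntlett–Martelli–Sparks–Waldram examples of \cite{gmsw}) and perform $k-1$ fixed smooth corner truncations, checking at each stage that goodness is preserved.

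Second comes the topological identification. Any good height–$1$ toric diagram yields, by the structure theorem for toric diagrams, a cone $C(S)$ with trivial canonical bundle, so that $S$ is spin; a Gysin–sequence computation for the transversely toric structure gives $H_2(S;\bfZ)\cong\bfZ^{d-3}$ (consistent with $S^5$ for $d=3$ and $S^2\times S^3$ for $d=4$), and one checks, via Lerman's criterion \cite{lerman} — which for a smooth polygon with the Calabi–Yau grading reduces to primitivity of the relevant first Chern class — that $S$ is simply connected, hence $H_2(S;\bfZ)$ is free of rank $d-3$. By the Smale–Barden classification, a simply connected spin five–manifold with torsion–free $H_2\cong\bfZ^k$ is diffeomorphic to $\#k(S^2\times S^3)$; choosing $d=k+3$ yields this fixed manifold for every member of the family. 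Since each member satisfies $(14)$, Theorem $5.30$ endows it with a toric Sasaki–Einstein metric, namely the Ricci–flat K\"ahler cone metric whose Reeb vector field is the volume minimizer.

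It then remains to see that infinitely many of these Einstein metrics on the single manifold $\#k(S^2\times S^3)$ are inequivalent. Neither a basic deformation (Proposition $5.8$) nor a deformation of the Reeb vector field inside a fixed cone changes the biholomorphism type of $C(S)$; and two members with $GL(2,\bfZ)\ltimes\bfZ^2$–inequivalent polygons have non–isomorphic toric cones, hence (the acting torus being maximal in the holomorphic automorphism group) non–biholomorphic $C(S)$. Consequently the associated Sasaki–Einstein structures are pairwise deformation–inequivalent. One can moreover separate them as Riemannian metrics: after fixing the Einstein normalization $\mathrm{Ric}=2mg$ of Proposition $5.4$, the total volume is an isometry invariant, and by the volume–minimization picture (Example $5.28$) it equals a fixed multiple of the Euclidean volume of $\{x\in\mathcal{C}\mid\widetilde\xi_{\min}\cdot x\le1\}$; arranging the polygons so that these algebraic numbers take infinitely many distinct values makes the metrics pairwise non–isometric.

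The main obstacle is the topological step: one must control $\pi_1(S)$ and the torsion of $H_2(S)$ simultaneously, and uniformly along the chosen infinite family, so that the Smale–Barden invariants remain exactly those of $\#k(S^2\times S^3)$ as the combinatorics of $\Delta$ varies. The remaining work — writing down the infinite family of polygons explicitly, verifying goodness via the $m=2$ criterion, and confirming their $GL(2,\bfZ)\ltimes\bfZ^2$–inequivalence (which is what forces the resulting metrics to be deformation–inequivalent) — is, by comparison, routine; the details are carried out in \cite{CFO}.
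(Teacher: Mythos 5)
Your outline follows essentially the same route as the paper, which states this theorem as a direct application of Theorem 5.30 and refers to \cite{CFO} for the details: good height-one toric diagrams (polygons with $k+3$ vertices) satisfying condition (14), topological identification of the link via simple connectivity, spin-ness and torsion-free $H_2\cong\bfZ^{k}$ together with Smale's classification of simply connected spin $5$-manifolds, and inequivalence deduced from lattice-inequivalent diagrams (equivalently, non-isomorphic cones or distinct minimal volumes). Since the survey gives no further proof beyond citing \cite{CFO}, your sketch is the intended argument, with the deferred combinatorial checks (explicit polygons, goodness, simple connectivity) being exactly the content carried out there.
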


The existence of Sasaki-Einstein metrics, which is possibly non-toric, on 
$\#k(S^2\times S^3)$ has been known by the works of Boyer, Galicki, Nakamaye
and Koll\'ar (\cite{BoGaKo05}, \cite{BoGaNa02}, \cite{Kol04}), and
that the existence of toric Sasaki-Einstein metrics for all odd $k$'s
has been known by van Coevering (\cite{Coev06}).
Hence our results is new in that we obtain toric constructions for all $k$'s.
Moreover most of our examples should be irregular while the previous ones
are all quasi-regular.

As another application of Theorem $5.30$, we
get the following.

\begin{theorem}[\cite{futaki07.2}]\label{main2} 
Let $M$ be a toric Fano manifold and $L$ a holomorphic line bundle
on $M$ such that $K_M = L^{\otimes p}$ for some positive integer $p$.
Then, for each positive integer $k$, there exists a
complete scalar-flat K\"ahler metric on the total space of $L^{\otimes k}$.
In particular, when $k=p$, it is Ricci-flat.
\end{theorem}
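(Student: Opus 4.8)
The plan is to realize the total space of $L^{\otimes k}$ as the resolution of a K\"ahler cone and then transport the Sasaki-Einstein (Ricci-flat cone) metric produced by Theorem~\ref{main2}'s prerequisite, Theorem $5.30$, across this identification. First I would observe that since $M$ is a toric Fano manifold with $K_M = L^{\otimes p}$, the line bundle $L$ is itself toric, and the total space of $L^{\otimes k}$ minus its zero section is an open toric variety. The appropriate cone to build is $C(S)$, the K\"ahler cone associated to a good rational convex polyhedral cone $\mathcal{C}\subset \mathbb{R}^{m+1}$ which is a toric diagram of height $\ell$ in the sense of Definition $5.16$; the height is read off from the relation $K_M = L^{\otimes p}$, so that the relevant integer is $\ell = p$ (or a divisor thereof). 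Concretely, if $M$ corresponds to a fan with primitive ray generators $v_j\in\mathbb{Z}^m$, then the cone over the polytope of $L$ gives rays $\lambda_j = (\ell, v_j)\in\mathbb{Z}^{m+1}$, and $C(S)$ is the affine toric variety of this cone, whose canonical bundle satisfies $K_{C(S)}^{\otimes \ell}$ trivial by Theorem $5.17$. The smooth locus $C(S)\setminus\{o\}$ fibers over $M$; in fact the link $S$ is the associated $U(1)$-bundle and $C(S)\setminus\{o\}$ is the complement of the zero section in the total space of a line bundle whose $\ell$-th power is $K_M$, i.e. of $L^{\otimes 1}$ up to the choice of height.

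Second, I would invoke Theorem $5.30$: the compact toric Sasaki manifold $S$ satisfying condition $(14)$ admits a Ricci-flat K\"ahler cone metric $\overline{g_{SE}}$ on $C(S)$, with Reeb vector field the minimizer $\Tilde{\xi}_{min}\in\mathcal{C}_c^*$ of the volume functional $\Tilde{\mathcal{S}}$. This gives a complete Ricci-flat K\"ahler metric on $C(S)\setminus\{o\}$. The next step is the crucial resolution argument: $C(S)$ has at worst an isolated Gorenstein (when $\ell=1$) or quotient singularity at the apex, and the total space of $L^{\otimes k}$ is a \emph{partial crepant resolution} of $C(S)$ obtained by blowing up the apex to insert the zero section $M$. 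One must check that the Ricci-flat cone metric, which on $C(S)\setminus\{o\}$ is already a complete metric, extends (or rather, that the resolution map is an isomorphism away from the apex so no extension is needed) — the total space of $L^{\otimes k}$ with its zero section removed \emph{is} $C(S)\setminus\{o\}$ when $k$ corresponds to the height, and more generally is a finite quotient or cover thereof. Completeness at infinity is the cone structure; completeness near the zero section must be argued separately when $k\ne p$.

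Third, I would treat the general $k$ by a base change / quotient construction: the total space of $L^{\otimes k}$ is related to that of $L^{\otimes p}=K_M$ by the natural $\mathbb{Z}/k$ or $\mathbb{Z}/p$ action on fibers, so one obtains the scalar-flat (not necessarily Ricci-flat) K\"ahler metric on $L^{\otimes k}$ by pulling back and rescaling; scalar-flatness rather than Ricci-flatness is expected because the resolved space no longer has trivial canonical bundle unless $k=p$. I expect the main obstacle to be precisely this last point: verifying that the metric one writes down on the total space of $L^{\otimes k}$ for $k\ne p$ is genuinely \emph{complete} (both near the zero section and towards the cone end) and has \emph{vanishing scalar curvature}, rather than merely constant scalar curvature or an incomplete Ricci-flat metric. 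The Ricci-flat case $k=p$ should follow directly from Theorem $5.30$ once the identification $C(S)\setminus\{o\}\cong (\text{total space of }K_M)\setminus(\text{zero section})$ is in place; the scalar-flat statement for general $k$ requires controlling the conformal/ALE-type behavior and will likely use the explicit toric K\"ahler potential together with the volume-minimization characterization of $\Tilde{\xi}_{min}$.
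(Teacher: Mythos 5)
You have identified the right spaces but you never actually construct the metric, and the two mechanisms you propose for producing it both fail. Theorem 5.30 supplies a Ricci-flat K\"ahler \emph{cone} metric $dr^2+r^2g_{SE}$ on $C(S)$. Contrary to your second step, this metric is not complete: the apex lies at finite distance from every level set $\{r=\mathrm{const}\}$, and under the blow-down map $\mathrm{Tot}(L^{\otimes k})\to C(S)\cup\{o\}$, which contracts the zero section to the apex, its pullback is degenerate along the zero section (tangent vectors to the zero section are killed). So ``transporting the cone metric across the resolution'' produces no K\"ahler metric at all on the total space, let alone a complete one; in particular the case $k=p$ does \emph{not} ``follow directly from Theorem 5.30'' --- constructing a smooth complete Ricci-flat metric on the resolution is the entire content of Theorem \ref{main2}. (This is also why Theorem \ref{main3}(a) and Theorem \ref{TSE4} assert only a complete \emph{scalar-flat} metric on the cone: a new metric must be built even there.) Your third step fails as well: the fiberwise power map $\mathrm{Tot}(L)\to \mathrm{Tot}(L^{\otimes k})$ is a covering only off the zero sections (it is branched along them), and pulling back or pushing forward a Ricci-flat metric and rescaling can only give Ricci-flat metrics where it gives anything smooth. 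Since $K_{\mathrm{Tot}(L^{\otimes k})}=\pi^{*}L^{\otimes(p-k)}$, so that $c_1$ of the total space is nonzero for $k\neq p$, no Ricci-flat K\"ahler metric exists there at all; scalar-flatness is an equation that must be imposed and solved for each $k$, not inherited by base change.

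The missing idea is exactly the one the paper points to after Theorems \ref{TSE3} and \ref{TSE4}: the metrics of \cite{futaki07.2} are obtained by applying the Hwang--Singer momentum (Calabi-type) construction \cite{Hwang-Singer02} to $\eta$-Einstein Sasaki structures. One passes from the toric Sasaki--Einstein metric guaranteed by Theorem 5.30 (via Corollary 5.22) to $\eta$-Einstein Sasaki structures by $D$-homothetic transformation, and over these the ansatz governed by a momentum profile reduces the scalar-flat (resp.\ Ricci-flat) condition to an ODE; the boundary behavior of the profile at the two ends is what yields smooth extension across the added zero section of $L^{\otimes k}$ and completeness at the cone end, with Ricci-flatness attainable precisely when $k=p$, i.e.\ when $K_{\mathrm{Tot}}$ is trivial --- your observation about the canonical bundle correctly explains \emph{why} only $k=p$ can be Ricci-flat, but it does not produce the metric. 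Your identification of $\mathrm{Tot}(L^{\otimes k})$ minus its zero section with the K\"ahler cone and your appeal to the toric diagram are in the right spirit (though note the diagram attached to $K_M$ has height $1$, not $p$), but without the momentum-construction ODE, or some substitute existence theorem for complete metrics on the resolution asymptotic to the cone, no step of your argument ever yields the asserted metric.
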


When $M$ is the one-point blow-up of the complex projective plane,
Oota-Yasui \cite{OY06} constructed such complete 
Ricci-flat K\"ahler metric explicitly, but their metric is different from
the one constructed in \cite{futaki07.2}.

\begin{theorem}[\cite{futaki07.2}]\label{main3} 
Let $(S,g)$ be a compact Sasaki-Einstein manifold.
We denote by $(C(S),J,\bar{g})$ the K\"ahler cone manifold.
Then the following statements hold.
\begin{enumerate}
\item[(a)]  There exists a complete scalar-flat K\"ahler metric on
$(C(S),J)$．
\item[(b)] For any negative constant $c$, there is $\gamma>0$ such that
there exists a complete K\"ahler metric of scalar curvature $c$
on the submanifold $\{0 < r < \gamma \}\subset C(S)$.
\end{enumerate}
Hence a toric K\"ahler cone $C(S)$ obtained from a toric diagram
admits a complete scalar-flat K\"ahler metric.
\end{theorem}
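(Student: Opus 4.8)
The plan is to produce both metrics by a Calabi-type \emph{momentum construction} on the cone, exploiting the fact that, by Proposition~$5.4$, the Sasaki--Einstein condition means precisely that the transverse K\"ahler metric $g^{T}$ is K\"ahler--Einstein with constant $2m+2$, while $(C(S),J,\bar g)$ is the Ricci-flat K\"ahler cone $dr^{2}+r^{2}g$. Write $\xi,\eta$ for the Reeb field and contact form, $\omega^{T}=d\eta/2$, and let $\tau$ be the moment coordinate of the Reeb $S^{1}$-action (essentially $r^{2}$ times the ``speed'' of a radial K\"ahler potential). On $(C(S),J)$ one works only with Reeb-invariant K\"ahler metrics whose K\"ahler potential depends on $r^{2}$ alone; every such metric can be put in momentum form
\begin{equation*}
g \;=\; \frac{d\tau^{2}}{4\,\phi(\tau)} \;+\; \phi(\tau)\,\eta\otimes\eta \;+\; \tau\,g^{T}
\end{equation*}
for a positive profile $\phi$ on an interval of $\tau$-values, the original cone being $\phi(\tau)=\tau$ on $(0,\infty)$. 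Because $g^{T}$ is K\"ahler--Einstein and the cone is Calabi--Yau, the Ricci form of $g$ stays ``radial'' and a computation reduces the scalar curvature to $\mathrm{Scal}(g)=m(m+1)/\tau-\tau^{-m}(\tau^{m}\phi)''$, the normalization being pinned down by the cone's being scalar-flat. Hence $\mathrm{Scal}(g)\equiv c$ becomes the linear ODE $(\tau^{m}\phi)''=m(m+1)\tau^{m-1}-c\,\tau^{m}$, and two integrations give
\begin{equation*}
\phi(\tau)\;=\;\tau\;-\;\frac{c}{(m+1)(m+2)}\,\tau^{2}\;+\;C_{1}\,\tau^{1-m}\;+\;C_{0}\,\tau^{-m}
\end{equation*}
with two free constants $C_{0},C_{1}$.

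For part~(a) set $c=0$, so $\phi(\tau)=\tau^{-m}Q(\tau)$ with $Q(\tau)=\tau^{m+1}+C_{1}\tau+C_{0}$. I would take $C_{1}=-(m+1)a^{m}$ and $C_{0}=m\,a^{m+1}$ for a parameter $a>0$; then $a$ is the unique interior critical point of $Q$ with $Q(a)=0$, so $\phi>0$ on $(a,\infty)$ and $\phi(\tau)\sim\mathrm{const}\cdot(\tau-a)^{2}$ near $a$. The associated metric is then complete on all of $C(S)$: using that $d(\log r)/d\tau$ is a positive multiple of $1/\phi$, one sees $\tau\to\infty$ corresponds to $r\to\infty$ and $\tau\to a^{+}$ to $r\to0$, and both ends are at infinite metric distance, since $\int^{\infty}\!d\tau/\sqrt{\phi}=\infty$ (because $\phi\sim\tau$) and $\int_{a}\!d\tau/\sqrt{\phi}=\infty$ (the double zero). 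On the interior $\phi>0$ makes $g$ smooth. This proves~(a); the metric so obtained is scalar-flat but in general not Ricci-flat, so completeness is bought at the cost of the Ricci-flatness of the incomplete cone metric.

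For part~(b) take $c<0$, so the $\tau^{2}$-term has positive coefficient and $\phi(\tau)\sim\frac{-c}{(m+1)(m+2)}\tau^{2}$ as $\tau\to\infty$. Choosing $C_{0},C_{1}$ again so that $Q(\tau)=\tau^{m}\phi(\tau)$ has a double root at some $a>0$ and is positive on $(a,\infty)$, the metric is defined for $\tau\in(a,\infty)$; but now $\int^{\infty}\!d\tau/\phi\sim\int^{\infty}\!d\tau/\tau^{2}<\infty$, so $r$ tends to a \emph{finite} value $\gamma$ as $\tau\to\infty$ and the metric lives on $\{0<r<\gamma\}\subset C(S)$. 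It is complete there: $\int^{\infty}\!d\tau/\sqrt{\phi}\sim\int^{\infty}\!d\tau/\tau=\infty$ gives completeness of the end $r\to\gamma$, while the double zero at $\tau=a$ gives completeness of the end $r\to0$ as in~(a). Here $\gamma=\gamma(c)$ is read off from the convergent integral, and varying $a$ (a scaling) produces such a metric for every negative $c$. Finally, a toric K\"ahler cone arising from a toric diagram is the cone over a compact toric Sasaki manifold satisfying condition~$(14)$, which by Theorem~$5.30$ carries a toric Sasaki--Einstein metric; applying~(a) to that cone gives the last assertion.

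The ODE itself is elementary, so the real work is the bookkeeping in the two previous paragraphs: one must choose the integration constants so that $\phi$ is \emph{strictly positive} throughout the relevant $\tau$-interval (equivalently, that $g$ is positive-definite there) while simultaneously arranging the vanishing orders of $\phi$ at the endpoints that force geodesic completeness -- a double zero at the inner end, and linear growth at infinity in case~(a), quadratic growth in case~(b) -- and, in case~(a), matching the asymptotics to the given Ricci-flat cone at $r\to\infty$. I expect this to pin down the constants essentially uniquely up to the scaling parameter $a$, and to be the only genuinely delicate point; checking interior smoothness and deriving the scalar-curvature formula above are routine.
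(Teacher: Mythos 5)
Your proposal is correct and follows essentially the same route as the paper, which proves this theorem (in \cite{futaki07.2}) by applying the Calabi/Hwang--Singer moment construction to the Sasaki--Einstein structure: one writes the metric in momentum form, reduces constant scalar curvature to the linear ODE in $\tau^{m}\phi$, and chooses the integration constants so that $\phi$ has a double zero at the inner endpoint and the right growth at the outer end. The only detail you left implicit, positivity of $Q=\tau^{m}\phi$ on $(a,\infty)$ in case (b), follows immediately from the convexity $Q''(\tau)=-c\,\tau^{m}+m(m+1)\tau^{m-1}>0$, so the argument is complete up to the normalization constants you already noted are immaterial.
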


As a special case of Theorems $5.33$ and $5.34$, we have the following.

\begin{theorem}[\cite{futaki07.2}]\label{TSE3}
Let $M$ be a toric Fano manifold. Then
there exists a complete Ricci-flat K\"ahler metric on the total space
of the canonical bundle $K_M$.
\end{theorem}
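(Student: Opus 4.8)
At the formal level Theorem~\ref{TSE3} is the special case $(L,p,k)=(K_M,1,1)$ of Theorem~\ref{main2}: applying that theorem with $L=K_M$ and $p=1$ gives, for $k=1$, a complete scalar-flat K\"ahler metric on the total space of $L^{\otimes 1}=K_M$, and since $k=p$ this metric is Ricci-flat. So the plan is to indicate how Theorem~\ref{main2} itself is proved; Theorem~\ref{TSE3} then drops out by this specialization.

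The first step is to identify the relevant complex geometry. Since $M$ is a toric Fano manifold, $K_M$ is a negative line bundle, so the total space of $K_M$ with its zero section removed is biholomorphic to the K\"ahler cone $C(S)$ over the $U(1)$-bundle $S=S(K_M)$; moreover the total space of $K_M$ has trivial canonical bundle, so $c_1(C(S))=0$, and in toric language the image of the moment map is a toric diagram of height $1$. By the toric Sasaki--Einstein existence theorem proved earlier via volume minimization, there is a Reeb vector field $\widetilde{\xi}_{min}$, the minimizer of $\widetilde{\mathcal S}$ on $\mathcal C_c^{*}$, for which $C(S)$ carries a Ricci-flat K\"ahler cone metric $\bar g_{SE}=\tfrac{\sqrt{-1}}{2}\partial\bar\partial r^{2}$; equivalently $S$ admits a (generically irregular) toric Sasaki--Einstein metric. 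This cone metric is incomplete at the apex, and the apex is exactly the point that gets replaced by the zero section $M$ when one passes from $C(S)$ to the full total space of $K_M$.

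The second step is to run the Calabi momentum ansatz to complete the cone metric across the zero section. On the total space of $K_M$ I would look for a K\"ahler form $\tfrac{\sqrt{-1}}{2}\partial\bar\partial F(r^{2})$; using the holomorphic volume form provided by $c_1=0$, the Ricci-flat Monge--Amp\`ere equation $\bigl(\tfrac{\sqrt{-1}}{2}\partial\bar\partial F(r^{2})\bigr)^{m+1}=\mathrm{const}\cdot\Omega\wedge\bar\Omega$ reduces, under the substitution $x=r^{2}$, to a single ordinary differential equation for $F$. One then solves this ODE and fixes the constants of integration so that $F$ extends smoothly with $F'>0$ across $x=0$ (this is what makes the metric close up smoothly over the zero section of the line bundle $K_M$) and so that $F(x)$ matches the cone potential as $x\to\infty$ (this yields completeness and the asymptotic identification with $\bar g_{SE}$), all while keeping the form positive throughout. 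For $k=1$ the solution is Ricci-flat; in the general statement of Theorem~\ref{main2} one runs the same construction on $L^{\otimes k}$ with $K_M=L^{\otimes p}$ and obtains only the scalar-flat conclusion.

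I expect the main obstacle to be precisely this boundary-value problem for the ODE: producing a single $F$ that simultaneously extends smoothly across the zero section with the correct leading behaviour at $x=0$, keeps $\tfrac{\sqrt{-1}}{2}\partial\bar\partial F(r^{2})$ positive on the whole total space, and has the asymptotics at infinity needed for completeness and for the metric to be asymptotic to the Ricci-flat cone $\bar g_{SE}$ coming from volume minimization. Once the ODE analysis is carried out and these conditions are checked to be compatible, Theorem~\ref{main2} follows, and setting $p=k=1$ gives Theorem~\ref{TSE3}.
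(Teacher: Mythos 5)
Your proposal matches the paper's route: Theorem~\ref{TSE3} is presented there precisely as the special case of Theorems~\ref{main2} and~\ref{main3}, and the underlying proof cited is the moment (momentum) construction of Hwang--Singer applied to the ($\eta$-)Einstein Sasaki structure on $S(K_M)$ furnished by the toric Sasaki--Einstein existence theorem via volume minimization, which is exactly the Calabi-type ansatz and ODE boundary-value analysis you sketch. So your argument is correct and essentially the same as the paper's.
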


Theorem \ref{TSE3} is an extension of the Eguchi-Hanson metric
($M = \bfC\bfP^1$, \cite{eguchi-hanson}) or
the Calabi metric ($M = \bfC\bfP^m$, \cite{calabi79}).

\begin{theorem}[\cite{futaki07.2}]\label{TSE4}
Let $M$ be a toric Fano manifold. Then
there exists a complete scalar-flat K\"ahler metric on $K_M
\setminus\{\text{zero section}\}$.
\end{theorem}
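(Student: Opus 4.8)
The plan is to exhibit $K_M\setminus\{\text{zero section}\}$ as the K\"ahler cone $C(S)$ over a compact Sasaki--Einstein manifold $S$, and then to invoke Theorem \ref{main3}(a), which produces a complete scalar-flat K\"ahler metric on $(C(S),J)$ for any compact Sasaki--Einstein $(S,g)$. The statement will then follow with essentially no further analysis; the only thing that has to be verified by hand is a toric-geometric identification.

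First I would take $S=S(K_M)$, the $U(1)$-bundle over $M$ associated with the canonical line bundle $K_M$. Since $M$ is a toric Fano manifold, $S$ is a compact $(2m+1)$-dimensional toric Sasaki manifold, and by Lemma $5.14$ and Theorem $5.16$ the image of its moment map is a good rational convex polyhedral cone $\mathcal{C}\subset\mathfrak{t}^*$ that is a toric diagram of height $1$; hence condition $(14)$ holds, $K_{C(S)}$ is trivial, and the complex manifold $(C(S),J)$ is biholomorphic to $K_M\setminus\{\text{zero section}\}$ (the affine toric variety attached to $\mathcal{C}$ is the contraction of the zero section of $K_M$ to a point, with that point deleted --- compare the case worked out in Example $5.31$). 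Crucially, $(C(S),J)$ as a complex manifold depends only on $\mathcal{C}$, so it is unchanged when we later vary the Reeb vector field inside $\mathcal{C}_c^*$.

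Since $S$ satisfies $(14)$, the volume-minimization results apply: by Theorems $5.29$ and $5.30$ the function $\widetilde{\mathcal{S}}$ on $\mathcal{C}_c^*$ has a unique minimizer $\widetilde{\xi}_{\min}$, at which $f_{\widetilde{\xi}_{\min}}\equiv 0$, and the toric Sasaki structure on $S$ with Reeb vector field $\widetilde{\xi}_{\min}$ carries a Sasaki--Einstein metric $g_{SE}$ (equivalently, by Corollary $5.22$, there is a $T^{m+1}$-invariant basic function $\varphi$ making $(S,g_\varphi;\widetilde{\xi}_{\min},\eta_\varphi,\Phi)$ Sasaki--Einstein). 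Applying Theorem \ref{main3}(a) to the compact Sasaki--Einstein manifold $(S,g_{SE})$ now gives a complete scalar-flat K\"ahler metric on $(C(S),J)$, which under the biholomorphism above is a complete scalar-flat K\"ahler metric on $K_M\setminus\{\text{zero section}\}$. This is the desired conclusion. The metric obtained this way is genuinely new: the complete Ricci-flat metric of Theorem \ref{TSE3} lives on the whole total space of $K_M$ and becomes incomplete once the zero section is removed, whereas the metric of Theorem \ref{main3}(a) is complete on the cone itself, its $r\to 0$ end corresponding to the direction of the removed zero section.

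The hard part is therefore not in the present deduction but is already packaged into the results quoted: Theorem $5.30$, whose proof uses the convexity and properness of $\widetilde{\mathcal{S}}$ together with the continuity-method estimates for the transverse Monge--Amp\`ere equation, and Theorem \ref{main3}(a), which constructs the scalar-flat completion on the cone. The one genuine point still to be checked is the biholomorphism $(C(S),J)\cong K_M\setminus\{\text{zero section}\}$ and its independence of the (possibly irregular) Reeb field realizing $g_{SE}$ --- a routine but necessary computation in toric geometry, the affine toric variety attached to $\mathcal{C}$ being precisely the total space of $K_M$ with its zero section collapsed.
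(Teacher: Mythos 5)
Your proposal is correct and follows essentially the paper's own route: the paper also obtains this theorem as a special case of Theorem \ref{main3}(a) (whose last sentence is exactly the statement for toric K\"ahler cones arising from toric diagrams), combined with the toric Sasaki--Einstein existence result of Theorem $5.30$ applied to $S(K_M)$, the underlying cone construction being the Hwang--Singer moment construction on $\eta$-Einstein Sasaki manifolds. Your added care about the biholomorphism $(C(S),J)\cong K_M\setminus\{\text{zero section}\}$ being independent of the (possibly irregular) minimizing Reeb field is consistent with the paper's framework, since $KCM(C(S),J)$ is by definition a space of cone metrics on the fixed complex manifold $(C(S),J)$.
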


We can prove Theorems \ref{TSE3} and \ref{TSE4} by applying
the moment construction (\cite{Hwang-Singer02}) to $\eta$-Einstein Sasaki
manifolds.

\subsection{Uniqueness of toric Sasaki-Einstein metrics}

In the present section, we see the uniqueness of Sasaki-Einstein metrics
on compact toric Sasaki manifolds modulo the action of the
identity component of the automorphism group for the transverse holomorphic
structure.

\begin{definition}
Let $(S,g;\xi,\eta,\Phi)$ be a Sasaki manifold and
$(C(S),\bar{g},J)$ its K\"ahler cone.
We call an automorphism  of $(C(S),J)$
an {\bf automorphism of transverse holomorphic structure}
if it commutes with the holomorphic flow generated by
$\Tilde{\xi}-\sqrt{-1}J\Tilde{\xi}$.
We denote by $Aut(C(S),\Tilde{\xi})_0$
the identity component of the group of the automorphism of 
the transverse holomorphic structure.
\end{definition}

In K\"ahler geometry a well-known method of proving uniqueness of constant
scalar curvature metrics is to use geodesics on the space of all K\"ahler
metrics in a fixed K\"ahler class, see \cite{Chen}, \cite{Mabuchi}
for example. This idea becomes substantially simpler when the K\"ahler
manifold under consideration is toric because the geodesic becomes
a line segment expressed by the symplectic potentials, which is
the Legendre dual of the K\"ahler potentials, see \cite{Guan}.
In the Sasaki case, we can prove the uniqueness of toric Sasaki-Einstein
metric using a similar idea.

Let $(S,g;\xi,\eta,\Phi)$ be a $(2m+1)$-dimensional compact Sasaki
manifold satisfying the condition $(14)$ and put
$$\mathcal{K}(\Tilde{\xi}):=\{\varphi:T\text{-invariant basic function}
\ |\ \omega^T+\sqrt{-1}\partial_B\bar{\partial}_B\varphi>0\},$$
where 
$\mathcal{K}(\Tilde{\xi})/\mathbb{R}\simeq KCM_c(\Tilde{\xi}),\ 
\varphi+$(constant)$\mapsto g_\varphi$ ($g_\varphi$ is the Sasaki metric
obtained as Proposition $5.8$).
Then we define the equation of geodesics $\{\varphi_t\}$ in
$\mathcal{K}(\Tilde{\xi})$ as
\begin{equation}
\ddot{\varphi}_t-\lvert \bar{\partial}\dot{\varphi}_t\rvert _{\omega_t}^2=0
\end{equation}

As in K\"ahler case, we can show that the existence of geodesics
induces the uniqueness of Sasaki-Einstein metrics,
since the ``transverse Mabuchi energy" is convex.

\begin{proposition}
Let $(S,g_{\varphi_i};\xi,\eta_{\varphi_i},\Phi),\ i=1,2$, be
Sasaki-Einstein manifolds. If there exists a geodesic in
$\mathcal{K}(\Tilde{\xi})$ connecting $\varphi_1$ and $\varphi_2$,
then $\alpha^*g_{\varphi_2}=g_{\varphi_1}$ for some 
$\alpha\in Aut(C(S),\Tilde{\xi})_0$. 
\end{proposition}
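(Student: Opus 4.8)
The plan is to mimic the Mabuchi/Donaldson uniqueness argument for constant scalar curvature K\"ahler metrics, but in the transverse K\"ahler setting where everything simplifies because of the torus symmetry. First I would introduce the transverse Mabuchi energy $\mathcal{M}:\mathcal{K}(\Tilde{\xi})\to\mathbb{R}$, defined so that its Euler--Lagrange equation is exactly the transverse K\"ahler--Einstein equation $\rho^T-(2m+2)\omega^T=0$ (equivalently the Monge--Amp\`ere equation (20) with $X=0$), and whose first variation along a path $\{\varphi_t\}$ is $\frac{d}{dt}\mathcal{M}(\varphi_t)=-\int_S\dot\varphi_t\big((\text{transverse scalar curvature}) - \text{const}\big)\,\omega_t^T\wedge\eta$ (or the Ricci-potential version). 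The key analytic input is the second-variation/convexity computation: along a solution of the geodesic equation (37), one has
\begin{equation}
\frac{d^2}{dt^2}\mathcal{M}(\varphi_t)=\int_S \lvert \mathcal{D}\dot\varphi_t\rvert^2\,\omega_t^T\wedge\eta\ \ge\ 0,
\end{equation}
where $\mathcal{D}$ is the transverse Lichnerowicz operator $\bar\partial_B\mathrm{grad}^{1,0}_B$, with equality iff $\mathrm{grad}^{1,0}_B\dot\varphi_t$ is a transverse holomorphic vector field for every $t$. This is the transverse analogue of the Calabi--Chen--Donaldson--Mabuchi convexity identity and follows by a direct (if lengthy) computation using the geodesic equation to cancel the "bad" terms, exactly as in the K\"ahler case; I would cite \cite{FOW} for the detailed bookkeeping of transverse integration by parts.

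Granting this, the argument runs as follows. Suppose $g_{\varphi_1}$ and $g_{\varphi_2}$ are both Sasaki--Einstein, hence both are critical points of $\mathcal{M}$, so $\frac{d}{dt}\mathcal{M}(\varphi_t)$ vanishes at $t=0$ and $t=1$. By convexity of $t\mapsto\mathcal{M}(\varphi_t)$ along the connecting geodesic, $\mathcal{M}(\varphi_t)$ is then constant in $t$, so $\frac{d^2}{dt^2}\mathcal{M}(\varphi_t)\equiv0$; by the equality case above, $V_t:=\mathrm{grad}^{1,0}_B\dot\varphi_t$ is a transverse holomorphic vector field for each $t$. One checks that $V_t$ is in fact a transverse holomorphic Hamiltonian vector field in the sense of Definition 5.6 (its Hamiltonian being $\dot\varphi_t$), so it lifts to a holomorphic vector field on $C(S)$ commuting with $\Tilde{\xi}-\sqrt{-1}J\Tilde{\xi}$, i.e. an element of the Lie algebra of $Aut(C(S),\Tilde{\xi})_0$. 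Integrating the (time-dependent) flow of $V_t$ produces a family $\alpha_t\in Aut(C(S),\Tilde{\xi})_0$ with $\alpha_0=\mathrm{id}$ and $\alpha_t^*g_{\varphi_t}=g_{\varphi_0}=g_{\varphi_1}$ — this is the standard "the geodesic is an orbit of the complexified symmetry group" phenomenon, using that along a geodesic with $\mathcal{D}\dot\varphi_t\equiv0$ the path of metrics is precisely the trajectory generated by the holomorphic vector fields $V_t$. Setting $t=1$ and $\alpha=\alpha_1$ gives $\alpha^*g_{\varphi_2}=g_{\varphi_1}$, as claimed.

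The main obstacle I expect is the convexity identity (the displayed second-variation formula) together with the regularity issue it implicitly requires: geodesics in $\mathcal{K}(\Tilde{\xi})$ solving (37) are in general only $C^{1,1}$, so the equality-case analysis and the integration-by-parts in the second-variation computation must be justified at that regularity, or else one must work with $\varepsilon$-geodesics and pass to a limit. In the toric case this is exactly where the simplification mentioned in the text enters: the geodesic becomes an honest line segment in the space of $T$-invariant transverse symplectic potentials (the Legendre dual picture, cf. \cite{Guan}), which is smooth, so the convexity and the equality case can be verified directly and cleanly on the polytope side. Thus the real work is (i) setting up the transverse Mabuchi energy and its first and second variations correctly, and (ii) translating the geodesic equation into the symplectic-potential picture so that the $C^{1,1}$ difficulty disappears; the passage from "$\dot\varphi_t$ has transverse-holomorphic gradient" to "the metrics differ by an element of $Aut(C(S),\Tilde{\xi})_0$" is then formal.
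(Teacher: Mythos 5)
Your proposal is correct and follows essentially the same route as the paper, which justifies the proposition precisely by the convexity of the transverse Mabuchi energy along geodesics in $\mathcal{K}(\Tilde{\xi})$ (with the equality case producing transverse holomorphic vector fields that integrate to an element of $Aut(C(S),\Tilde{\xi})_0$, details as in \cite{CFO}). Your remarks on the $C^{1,1}$ regularity issue and the toric/symplectic-potential simplification also match the paper's discussion of why the toric case is tractable.
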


Especially, when $(S,g;\xi,\eta,\Phi)$ is a toric Sasaki manifold,
we see the existence of geodesics, as Guan's procedure in the
toric K\"ahler case (\cite{Guan}).

\begin{theorem}[\cite{CFO}]
Let $(S,g_{\varphi_i};\xi_{min},\eta_{\varphi_i},\Phi),\ i=1,2$, be
compact toric Sasaki-Einstein manifolds.
Then $\alpha^*g_{\varphi_2}=
g_{\varphi_1}$ for some $\alpha\in Aut(C(S),\Tilde{\xi})_0$.
\end{theorem}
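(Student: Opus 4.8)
The plan is to reduce the uniqueness statement for toric Sasaki-Einstein metrics to Proposition~5.38, so that the only remaining task is to construct a geodesic in $\mathcal{K}(\Tilde{\xi}_{min})$ joining the two given potentials $\varphi_1$ and $\varphi_2$. First I would fix the common Reeb vector field $\xi_{min}$ — this is legitimate because Theorem~5.30 (together with Theorem~5.27 and Theorem~5.29) shows that a toric Sasaki-Einstein metric satisfying $(14)$ must have Reeb vector field equal to the unique minimizer $\Tilde{\xi}_{min}$ of $\Tilde{\mathcal S}$; hence both $g_{\varphi_1}$ and $g_{\varphi_2}$ live in $KCM_c(\Tilde{\xi}_{min})$, equivalently correspond to basic potentials in $\mathcal{K}(\Tilde{\xi}_{min})$ modulo constants. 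So the complex structure on $C(S)$, the transverse holomorphic structure, and the moment cone $\mathcal{C}$ are all the same for both metrics, and only the transverse K\"ahler potential varies.

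Next I would set up the toric description. Since $(S,g;\xi_{min},\eta,\Phi)$ is toric of height $l$, the transverse K\"ahler structure on each $V_\alpha$ is a toric K\"ahler manifold (orbifold), and a $T^{m+1}$-invariant basic potential $\varphi$ corresponds, via the Legendre transform, to a symplectic potential on the fixed moment polytope $\Delta(\Tilde{\xi}_{min})=\{x\in\mathcal{C}\mid \Tilde{\xi}_{min}\cdot x\le 1\}$. The key point, exactly as in Guan's argument in the toric K\"ahler case (\cite{Guan}) and as recalled before the statement, is that in symplectic-potential coordinates the geodesic equation $(22)$ linearizes: the geodesic connecting the symplectic potentials $u_1,u_2$ of $\varphi_1,\varphi_2$ is simply the line segment $u_t=(1-t)u_1+tu_2$. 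One must check that each $u_t$ is again a legitimate symplectic potential — strict convexity is preserved by convex combination, and the Guillemin boundary behaviour along the facets of $\Delta(\Tilde{\xi}_{min})$ is an affine condition on the singular part, hence also preserved — so that $u_t$ dualizes back to a path $\varphi_t\in\mathcal{K}(\Tilde{\xi}_{min})$ solving $(22)$.

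Having produced the geodesic, I would invoke Proposition~5.38 directly: it gives $\alpha^*g_{\varphi_2}=g_{\varphi_1}$ for some $\alpha\in Aut(C(S),\Tilde{\xi}_{min})_0$, which is precisely the assertion of the theorem. I expect the main obstacle to be the boundary-regularity bookkeeping in the Legendre duality: one must verify that the line segment of symplectic potentials stays inside the correct class (Guillemin-type singular behaviour on each facet, smoothness in the interior, properness) so that the dual path really is a smooth geodesic of Sasaki structures in $\mathcal{K}(\Tilde{\xi}_{min})$ rather than merely a formal solution; this is where the toric height-$l$ structure and the good-cone condition are used. The convexity of the transverse Mabuchi energy along $(22)$, which powers Proposition~5.38, is the other ingredient, but it is already available from the earlier sections, so the new content of the proof is essentially the construction of the geodesic as a line segment.
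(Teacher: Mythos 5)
Your proposal follows essentially the same route the paper indicates for this theorem: reduce to Proposition~5.38 (convexity of the transverse Mabuchi energy along geodesics) and construct the geodesic by Guan's linearization, i.e.\ the line segment of symplectic potentials under Legendre duality, whose admissibility (strict convexity plus Guillemin boundary behaviour) is preserved under convex combination. The only caveat is that, to cover irregular Reeb fields, the symplectic potentials should be taken on the moment cone $\mathcal{C}$ with the homogeneity condition fixed by $\Tilde{\xi}_{min}$ rather than literally on the polytope $\Delta(\Tilde{\xi}_{min})$, but this is precisely the bookkeeping you flag and it does not change the argument.
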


\subsection{Obstructions to the existence of Sasaki-Einstein metrics}

By Theorems $5.30$ and $5.39$, we have solved the existence and 
the uniqueness of Sasaki-Einstein metrics in the compact toric case.
Then, does there exist a Sasaki-Einstein metric in the non-toric case?
The answer to this question, in general, is no. There are some obstructions
to the existence of Sasaki-Einstein metrics. They were suggested by
Gauntlett, Martelli, Sparks and Yau in \cite{gmsy}.
In the present section, we see such obstructions, called the Bishop obstruction
and the Lichnerowicz obstruction.

Let $(S,g;\xi,\eta,\Phi)$ be a compact Sasaki manifold and
$(C(S),\bar{g},J)$ the K\"ahler cone.
To introduce the obstructions,
we must define an invariant of the triple $(C(S),J,
\Tilde{\xi})$, where $\Tilde{\xi}$ is the Reeb vector field of $g$.
We denote by $H(S)$ the $L^2$-closure of the set of all smooth functions $f$
on $S$ which can extend to holomorphic functions $\Tilde{f}$ on
$\{r\le 1\}\subset (C(S),J)$ with $\Tilde{f}\to 0\ (r\to 0)$.
$H(S)$ 
is called the Hardy space.
Then the operator $T=\xi _{|H(S)}/\sqrt{-1}$ on $H(S)$
is a first-order self-adjoint Toeplitz
operator with positive symbol.

\begin{proposition}
$T$ has non-negative discrete spectra. 
\end{proposition}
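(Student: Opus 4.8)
The plan is to analyze the Toeplitz operator $T = \xi_{|H(S)}/\sqrt{-1}$ by exploiting the fact that $\xi$ is a Killing vector field generating an isometric flow on $C(S)$, combined with the weighted Bergman space structure of the Hardy space $H(S)$. First I would observe that $\sqrt{-1}\xi$ acts on holomorphic functions on $\{r \le 1\}$ as a first-order differential operator, and since $\widetilde{\xi} - \sqrt{-1}J\widetilde{\xi}$ is holomorphic, the flow $\exp(t\widetilde{\xi})$ acts on $(C(S),J)$ by biholomorphisms. Consequently it acts on the space of holomorphic functions, and by averaging over the flow (when it is quasi-regular) or by spectral decomposition of the one-parameter unitary group it generates on the $L^2$-closure $H(S)$, one decomposes $H(S)$ into joint eigenspaces. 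The key point is that the action preserves the radial structure because $\widetilde{\xi}$ commutes with $r\partial/\partial r$ (indeed $\widetilde{\xi} = Jr\partial/\partial r$ and $[\widetilde{\xi}, r\partial/\partial r] = 0$ since $J$ is parallel and the flow is holomorphic), so $T$ preserves each ``degree'' piece coming from the homogeneity grading on functions near the apex.

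Next I would make precise the positivity claim. The symbol of $T$ as a Toeplitz operator is the restriction of $\sqrt{-1}\widetilde{\eta}$ paired with $\widetilde{\xi}$, and $\widetilde{\eta}(\widetilde{\xi}) = r^{-2}\bar g(\widetilde{\xi},\widetilde{\xi}) = 1 > 0$ on $S$; this is exactly the statement that $T$ is a self-adjoint first-order Toeplitz operator with positive symbol, as asserted in the paragraph preceding the proposition. The discreteness of the spectrum then follows from the standard theory of Toeplitz operators with positive elliptic symbol on a compact contact manifold (in the sense of Boutet de Monvel--Guillemin): such operators have compact resolvent and hence discrete spectrum accumulating only at $+\infty$. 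For the non-negativity I would argue directly: if $f \in H(S)$ extends to a holomorphic $\widetilde f$ on $\{r \le 1\}$ vanishing at the apex, then in a quasi-regular model $\widetilde f$ expands in eigenfunctions of the $S^1$-action generated by $\xi$ with non-negative weights (only non-negative powers of the fiber coordinate appear because $\widetilde f$ is holomorphic across $r=0$ and vanishes there), and in the general irregular case the same conclusion holds by density and by writing $\langle Tf, f\rangle = \int_S \frac{1}{\sqrt{-1}}(\xi \widetilde f)\,\overline{\widetilde f}\,$ against the natural measure and integrating by parts, using that holomorphicity forces the ``negative frequency'' part to vanish.

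The main obstacle I anticipate is the irregular case: when the Reeb flow is not periodic there is no honest $S^1$-action and no clean Fourier decomposition, so one cannot simply read off the weights. The resolution is to approximate $\widetilde{\xi}$ by quasi-regular Reeb fields (which are dense in the Reeb cone, as follows from the toric/cone picture discussed in Lemma 5.14 and the surrounding material) and pass to a limit, or alternatively to invoke directly the microlocal structure: $H(S)$ is a generalized Hardy space associated to a strictly pseudoconvex CR structure, $T$ is a positive Toeplitz operator in the Boutet de Monvel--Guillemin calculus, and such operators are known to have spectrum bounded below, discrete, with finite-dimensional eigenspaces. I would close the argument by noting that $0$ is itself an eigenvalue (the constants, or rather their images, lie in the kernel only after the vanishing normalization is imposed—so in fact $0$ may or may not be attained, but in any case the spectrum is contained in $[0,\infty)$ and discrete), which is precisely the assertion of the proposition.
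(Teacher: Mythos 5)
Your first step (discreteness and boundedness below of the spectrum via the Boutet de Monvel--Guillemin theory of Toeplitz operators with positive symbol, using $\widetilde\eta(\widetilde\xi)=1>0$) is exactly how the paper begins, by citing that theory. The gap is in the non-negativity, which is the actual content of the proposition. Your Fourier-expansion argument works only in the (quasi-)regular case, and you rightly identify the irregular case as the obstacle, but neither of your proposed fixes closes it. Approximating $\widetilde\xi$ by quasi-regular Reeb fields changes the operator $T$ (and the Sasaki structure): the weights of a nearby quasi-regular structure are not the eigenvalues of the given $T$, and you provide no uniformity that would let a sign condition on eigenvalues pass to the limit. The alternative appeal to the Toeplitz calculus only yields a spectrum that is discrete and bounded \emph{below}, not contained in $[0,\infty)$; and the integration-by-parts identity for $\langle Tf,f\rangle$ only shows that $T$ is symmetric ($\xi$ being Killing, hence divergence-free), while the phrase ``holomorphicity forces the negative frequency part to vanish'' is precisely what needs proof --- in the irregular case ``frequency'' is not even defined without passing to the torus closure of the Reeb flow, which you do not do.

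What is missing is a simple homogeneity observation that makes the irregular case no harder than the regular one, and which is essentially the paper's entire argument: if $Tf=\lambda f$, i.e.\ $\xi f=\sqrt{-1}\,\lambda f$, then since $\widetilde\xi=Jr\,\partial/\partial r$ and the extension $\widetilde f$ is holomorphic, one gets $r\,\partial\widetilde f/\partial r=\lambda\widetilde f$, so the extension is exactly $\widetilde f=r^{\lambda}f$, homogeneous of degree $\lambda$ in $r$. The defining condition of $H(S)$, namely $\widetilde f\to 0$ as $r\to 0$, then forces $\lambda\ge 0$ directly, with no case distinction among regular, quasi-regular and irregular Reeb flows. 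You do record the commutation $[\widetilde\xi,r\,\partial/\partial r]=0$ and a ``degree grading'' in your first paragraph, but you never convert this into the eigenfunction-homogeneity statement; doing so would repair the proof.
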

\begin{proof}
By \cite{BG}, $T$ has discrete spectra bounded from below.
Suppose that $f\in H(S)$ satisfies $Tf(=\xi f/\sqrt{-1})
=\lambda f$. Then we see that the holomorphic extension
$\Tilde{f}$ of $f$ is given by $\Tilde{f}=r^\lambda f$.
Hence $\lambda\ge 0$ by the definition of $H(S)$.
\end{proof}

Of course,
the eigenvalues of $T$ depends only on the triple $(C(S),J,\Tilde{\xi})$.
So they define invariants of $(C(S),J,\Tilde{\xi})$ called
{\bf charges}, \cite{MSY2}, \cite{gmsy}.

\begin{example}
Let $(S,g;\xi,\eta,\Phi)$ be a regular Sasaki manifold.
Then the Reeb vector field $\xi$ generates a free $S^1$ action on $S$ and
the Sasaki structure induces the K\"ahler structure on the quotient
space $M=S/S^1$. Moreover there is an ample line bundle $L$ over
$M$ such that
$S=S(L)$, where $S(L)$ is the total space of the associated $S^1$-bundle,
see Example $5.3$.
In such case, we see that
$$H(S)\simeq \bigoplus_{k=0}^\infty H^0(M;L^k)$$
and $H^0(M;L^k)$ is the charge $k$ eigenspace
for each non-negative integer $k$.
\end{example}

By Corollary $5.26$, the volume $\Vol(S,g)$
of a Sasaki manifold $(S,\xi,\Phi)$
is an invariant of $(C(S),J,\Tilde{\xi})$.
We can obtain the invariant $\Vol(S,g)$
from the asymptotic behavior of the charges.

\begin{theorem}[\cite{MSY2}, \cite{BG}]
Let $0=\lambda_0< \lambda_1\le\cdots$ be the charges of 
a compact $(2m+1)$-dimensional Sasaki manifold $(S,g;\xi,\eta,\Phi)$.
Then
\begin{equation}
\Vol(S,g)=\gamma_{2m+1}\lim_{t\searrow 0}t^{m+1}\sum_{j=0}^\infty \exp
(-t\lambda_j),
\end{equation}
where $\gamma_{2m+1}$ is the volume of the $(2m+1)$-dimensional unit sphere.
\end{theorem}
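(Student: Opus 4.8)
The plan is to recognise the right-hand side as the leading coefficient of the small-time expansion of the heat trace
\[
\Theta(t) := \mathrm{Tr}\,e^{-tT} = \sum_{j\ge 0} e^{-t\lambda_j},\qquad T = \xi|_{H(S)}/\sqrt{-1},
\]
to show that this coefficient is a universal constant, depending only on $m$, times $\Vol(S,g)$, and then to fix the constant as $1/\gamma_{2m+1}$ by testing the identity on the round sphere $S^{2m+1}$.

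First I would dispose of the (quasi-)regular case, which simultaneously pins down the constant. When $\xi$ generates a locally free $S^1$-action, the Hardy space decomposes as $H(S)\cong\bigoplus_{k\ge 0}H^0(M,L^k)$ with $H^0(M,L^k)$ the charge-$k$ eigenspace (the Example above), so $\Theta(t)=\sum_{k\ge 0}h^0(L^k)e^{-tk}$. By Riemann--Roch, $h^0(L^k)=\tfrac1{m!}\bigl(\int_M c_1(L)^m\bigr)k^m+O(k^{m-1})$, and since $\sum_{k\ge 0}k^m e^{-tk}\sim m!\,t^{-(m+1)}$ as $t\searrow 0$, one obtains $\Theta(t)\sim\bigl(\int_M c_1(L)^m\bigr)t^{-(m+1)}$. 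On the other hand $dvol_g=\tfrac1{m!}\eta\wedge(\omega^T)^m$ with $\omega^T=d\eta/2$, and $\omega^T$ is pulled back from a K\"ahler form on $M$ in a class proportional to $c_1(L)$, so $\Vol(S,g)$ is a fixed multiple of $\int_M c_1(L)^m$; comparison gives the asserted identity in this case. Taking $(S^{2m+1},g_{\mathrm{round}})$, where $M=\mathbb{C}P^m$, $L=\mathcal{O}(1)$ and $h^0(\mathcal{O}(k))=\binom{m+k}{m}$ so that $\lim_{t\searrow 0}t^{m+1}\Theta(t)=1$, forces the universal constant to be $\gamma_{2m+1}=\Vol(S^{2m+1})$.

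For the general case, where no complex quotient exists, I would argue microlocally using the Boutet de Monvel--Guillemin spectral theory of Toeplitz operators (\cite{BG}). As recalled above, $T$ is a first-order self-adjoint Toeplitz operator with positive principal symbol; that symbol is the function on the symplectic cone $\Sigma^+\cong\mathbb{R}_+\times S\subset T^*S$ associated with the contact structure which, in the trivialisation $(s,x)\mapsto s\eta_x$, equals the coordinate $s$. The canonical symplectic form of $T^*S$ restricts on $\Sigma^+$ to $\omega_{\mathrm{can}}=d(s\eta)=ds\wedge\eta+s\,d\eta$, whence $\omega_{\mathrm{can}}^{m+1}=(m+1)s^m\,ds\wedge\eta\wedge(d\eta)^m$. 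By the Weyl law for Toeplitz operators the counting function $N(\Lambda):=\#\{j:\lambda_j\le\Lambda\}$ satisfies
\[
N(\Lambda)\sim\frac1{(2\pi)^{m+1}}\int_{\{0<s\le\Lambda\}}\frac{\omega_{\mathrm{can}}^{m+1}}{(m+1)!}=\frac{\Lambda^{m+1}}{(2\pi)^{m+1}(m+1)!}\int_S\eta\wedge(d\eta)^m .
\]
Since $\eta\wedge(d\eta)^m=2^m m!\,dvol_g$, the right-hand side is a fixed constant times $\Vol(S,g)\Lambda^{m+1}$, and Karamata's Tauberian theorem converts this into $\Theta(t)\sim c\,\Vol(S,g)t^{-(m+1)}$ with the same constant $c$; the regular case already treated shows $c=1/\gamma_{2m+1}$, and the theorem follows.

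The one substantive step---and the main obstacle---is the Weyl asymptotics of $T$ in the irregular case, namely the Boutet de Monvel--Guillemin theory producing the leading term of $N(\Lambda)$ as the Liouville volume of a symbol sublevel set in $\Sigma^+$; everything else (the symbol computation, the identity $\eta\wedge(d\eta)^m=2^m m!\,dvol_g$, and the Tauberian passage) is routine. When there are enough transverse symmetries one can instead argue by density, since quasi-regular Reeb fields are dense in the Reeb cone and both sides vary continuously with the Reeb field; but making this rigorous requires the Tauberian estimate to be uniform in the Reeb field, which is essentially as much work as the direct argument.
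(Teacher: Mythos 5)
Your proposal is correct, and it follows the route this survey implicitly points to: the paper states the theorem without proof, citing \cite{MSY2} and \cite{BG}, and the surrounding text (the description of $T=\xi|_{H(S)}/\sqrt{-1}$ as a first-order self-adjoint Toeplitz operator with positive symbol) is exactly the setup for the Boutet de Monvel--Guillemin Weyl law you invoke. Your constants check out: the symbol of $T$ on the cone $\Sigma=\{(x,s\eta_x)\}$ is $s$, $\omega_{\mathrm{can}}^{m+1}=(m+1)s^m\,ds\wedge\eta\wedge(d\eta)^m$, $\eta\wedge(d\eta)^m=2^m m!\,dvol_g$, and with $\gamma_{2m+1}=2\pi^{m+1}/m!$ the Weyl/Karamata asymptotics reproduce the stated identity (and agree with the direct count $N(\Lambda)\sim\Lambda^{m+1}/(m+1)!$ on the round sphere). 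Two harmless remarks: the passage from $N(\Lambda)\sim C\Lambda^{m+1}$ to $\Theta(t)\sim C\,\Gamma(m+2)t^{-(m+1)}$ is the elementary Abelian direction rather than a genuine Tauberian step, and in the regular-case calibration the ``fixed multiple'' relating $\Vol(S,g)$ to $\int_M c_1(L)^m$ is universal only after fixing the normalization implicit in Example 5.41 (charges integral, i.e.\ Reeb period $2\pi$); since the identity is invariant under $D$-homothetic rescaling this causes no loss. The closing density argument for irregular Reeb fields is indeed not needed once the \cite{BG} Weyl law is used directly.
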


In addition the charges relate with the eigenvalues of the Laplacian of 
$(S,g)$.

\begin{proposition}
Let $f\in H(S)$. If
$Tf=\lambda f$, then $\Delta_Sf=\lambda(\lambda+2m)f$.
Here $\Delta_S$ is the Laplacian of $(S,g)$
acting on $C^\infty(S)$. 
\end{proposition}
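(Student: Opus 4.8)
The plan is to transport the eigenvalue equation on $S$ up to the K\"ahler cone $C(S)$, where the function in question becomes holomorphic hence harmonic, and then to read the equation back off from the cone Laplacian. First I would recall, from the proof of the Proposition above asserting that $T$ has non-negative discrete spectra, that a function $f\in H(S)$ with $Tf=\lambda f$ is smooth and that its holomorphic extension to $\{r\le 1\}\subset C(S)$ is precisely $\wt f=r^{\lambda}f$, where $f$ is regarded as a function on the cone by pullback from $S$. It is exactly here that the equation $\xi f=\sqrt{-1}\lambda f$ enters: writing the Reeb holomorphic vector field as $\wt\xi-\sqrt{-1}J\wt\xi=\sqrt{-1}\bigl(r\partial_r-\sqrt{-1}\wt\xi\bigr)$, holomorphy of $\wt f$ is the condition $\bigl(r\partial_r+\sqrt{-1}\wt\xi\bigr)\wt f=0$, and $r^{\lambda}f$ satisfies it precisely when $\wt\xi f=\sqrt{-1}\lambda f$.

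Next, since $(C(S),J,\bar{g})$ is K\"ahler and $\wt f$ is holomorphic, $\wt f$ is harmonic on $\{0<r\le 1\}$: on a K\"ahler manifold the Laplace--Beltrami operator on functions equals $2\,\bar{\partial}^{*}\bar{\partial}$, which annihilates holomorphic functions. I would then invoke the standard expression for the Laplacian of the cone metric $\bar{g}=dr^2+r^2g$ on $C(S)=\mathbb{R}_+\times S$, with $\dim S=2m+1$, namely
\[
\Delta_{\bar{g}}h=-\frac{1}{r^{2m+1}}\partial_r\!\bigl(r^{2m+1}\partial_r h\bigr)+\frac{1}{r^{2}}\,\Delta_S h ,
\]
with $\Delta_S$ the non-negative Laplacian of $(S,g)$. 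Substituting $\wt f=r^{\lambda}f$ and using $\Delta_S(r^{\lambda}f)=r^{\lambda}\Delta_S f$ together with $-\frac{1}{r^{2m+1}}\partial_r\bigl(r^{2m+1}\partial_r r^{\lambda}\bigr)=-\lambda(\lambda+2m)r^{\lambda-2}$, the identity $\Delta_{\bar{g}}\wt f=0$ collapses to $\bigl(-\lambda(\lambda+2m)f+\Delta_S f\bigr)r^{\lambda-2}=0$, that is, $\Delta_S f=\lambda(\lambda+2m)f$.

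The computation is routine once these ingredients are assembled; the two points that genuinely need care are (i) importing the identification $\wt f=r^{\lambda}f$ from the previous proposition, so that one is differentiating the correct extension rather than merely assuming its shape, and (ii) fixing the sign conventions in the cone Laplacian so that the answer emerges as stated, with $\Delta_S\ge 0$. I do not expect any substantial obstacle beyond this bookkeeping: the essential content is simply that a holomorphic function on the cone is radially homogeneous of degree $\lambda$ exactly when its boundary value is a $T$-eigenfunction with eigenvalue $\lambda$, and that harmonicity of such a function forces the displayed quadratic relation between $\lambda$ and the transverse Laplace eigenvalue.
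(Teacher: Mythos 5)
Your argument is exactly the paper's proof: the eigenfunction extends holomorphically as $\widetilde f=r^{\lambda}f$ (as noted in the proof that $T$ has non-negative discrete spectrum), holomorphy plus the K\"ahler property of the cone gives harmonicity, and the standard cone Laplacian formula $\Delta_{C(S)}=\frac{1}{r^{2}}\Delta_S-\frac{1}{r^{2m+1}}\partial_r\bigl(r^{2m+1}\partial_r\bigr)$ applied to $r^{\lambda}f$ yields $\Delta_S f=\lambda(\lambda+2m)f$. Your additional remarks on the radial homogeneity and sign conventions are correct bookkeeping but do not change the route.
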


\begin{proof}
The holomorphic extension $\Tilde{f}=r^\lambda f$ of $f$ is
harmonic, since $(\{r\le 1\},\bar{g},J)$ is K\"ahler.
Therefore
\begin{align*}
0 &= \Delta_{C(S)}\Tilde{f} =\frac{1}{r^2}\Delta_S (r^\lambda f)
-\frac{1}{r^{2m+1}}
\frac{\partial}{\partial r}\left(r^{2m+1}\frac{\partial}{\partial r}\right)
(r^\lambda f)\\
&= r^{\lambda-2}(\Delta_Sf-\lambda(\lambda+2m)f),
\end{align*}
where $\Delta_{C(S)}$ is the Laplacian of $(\{r\le 1\},\bar{g})$
acting on $C^\infty(\{r\le 1\})$.
\end{proof}

Now, we recall the following two theorems from Riemannain geometry.
The first one is a
theorem of Lichnerowicz \cite{Lic};
if $(S,g)$ is a $m$-dimensional complete Riemannian manifold
with $\text{Ric}\ge (m-1)g$, then $M$ is compact and the first positive
eigenvalue of the Laplacian is greater than or equal to $m$.
The second one is a theorem 
of Bishop \cite{BC};
if $(S,g)$ is a $m$-dimensional complete Riemannian manifold
with $\text{Ric}\ge (m-1)g$, 
then the volume $\Vol(S,g)$ is less than or equal to
$\gamma_m$,
the volume of the $m$-dimensional unit sphere.
As a result of these theorems, we have the following necessary conditions
for the existence of Sasaki-Einstein metrics.

\begin{theorem}[Lichnerowicz's obstruction, \cite{gmsy}]
Let $(S,g;\xi,\eta,\Phi)$ be a $(2m+1)$-dimensional compact Sasaki-Einstein
manifold. Then the first positive charge $\lambda_1$ is greater than
or equal to $1$.
\footnote{In \cite{gmsy}, it is indicated that
the first positive charge $\lambda_1$ of any compact regular Sasaki manifold
satisfying $(14)$ is greater than or equal to $1$.
Hence, this condition does not give new obstruction to the existence
of K\"ahler-Einstein metric.}. 
\end{theorem}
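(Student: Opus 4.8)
The plan is to obtain the bound as an immediate consequence of the Riemannian Lichnerowicz theorem quoted above, Proposition 5.4, and Proposition 5.43. First I would note that, since $(S,g)$ is a $(2m+1)$-dimensional compact Sasaki-Einstein manifold, Proposition 5.4 gives $Ric_g = 2m\,g$, i.e. $Ric_g = ((2m+1)-1)\,g$. Thus the hypothesis of Lichnerowicz's theorem is satisfied (with equality), and applying it with $2m+1$ in the role of the dimension, the first positive eigenvalue of the Laplacian $\Delta_S$ acting on $C^\infty(S)$ is at least $2m+1$. In particular every positive eigenvalue of $\Delta_S$ is $\ge 2m+1$.

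Next I would feed in the charges. Let $0 = \lambda_0 < \lambda_1 \le \cdots$ be the charges of $(S,g;\xi,\eta,\Phi)$ and choose a nonzero $f \in H(S)$ with $Tf = \lambda_1 f$. By Proposition 5.43 (whose proof uses that $\widetilde{f} = r^{\lambda_1} f$ is harmonic on the K\"ahler cone), $\Delta_S f = \lambda_1(\lambda_1 + 2m) f$; since $f \neq 0$ this exhibits $\lambda_1(\lambda_1+2m)$ as an eigenvalue of $\Delta_S$, and it is positive because $\lambda_1 > \lambda_0 = 0$. Combining this with the previous step yields $\lambda_1(\lambda_1 + 2m) \ge 2m+1$.

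Finally I would finish by an elementary monotonicity observation: the function $t \mapsto t(t+2m) = t^2 + 2mt$ is strictly increasing on $[0,\infty)$ and equals $2m+1$ at $t = 1$. Hence $\lambda_1(\lambda_1+2m) \ge 2m+1 = 1\cdot(1+2m)$ forces $\lambda_1 \ge 1$, which is the assertion.

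All three ingredients are already in place, so there is no genuine analytic obstacle; the only point requiring a little care is to make sure the holomorphic extension $\widetilde{f} = r^{\lambda_1}f$ is nonzero, so that $\lambda_1(\lambda_1+2m)$ really belongs to the spectrum of $\Delta_S$, and that this eigenvalue is strictly positive --- which is exactly where the strict inequality $\lambda_1 > 0$ among the charges is used. One could alternatively phrase the argument purely in terms of the smallest positive eigenvalue of $\Delta_S$ and the smallest positive charge, but the route above is the most direct.
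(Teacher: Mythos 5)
Your proposal is correct and follows exactly the route the paper intends: it combines $Ric_g = 2m\,g$ from the Einstein characterization, the classical Lichnerowicz eigenvalue bound in dimension $2m+1$, and the relation $\Delta_S f = \lambda(\lambda+2m)f$ for charge eigenfunctions, finishing with the monotonicity of $t \mapsto t(t+2m)$. This matches the paper's (sketched) derivation, so there is nothing to add beyond the minor regularity remark that an $L^2$ eigenfunction of $\Delta_S$ is smooth by elliptic regularity, which the paper also leaves implicit.
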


\begin{theorem}[Bishop's obstruction, \cite{gmsy}]
Let $(S,g;\xi,\eta,\Phi)$ be a $(2m+1)$-dimensional compact Sasaki-Einstein
manifold. Then the volume $\Vol(S,g)$, which is an invariant of 
$(S,\xi,\Phi)$ by Corollary $5.26$ or Theorem $5.42$,
is less than or equal to $\gamma_{2m+1}$.

\end{theorem}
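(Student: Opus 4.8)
The plan is to derive this directly from the classical Bishop volume comparison theorem recalled just above, together with the Einstein normalization of Proposition $5.4$ and the invariance statement of Corollary $5.26$. First I would set $n=2m+1$ and observe that, by Proposition $5.4$, a Sasaki-Einstein metric $g$ on $S$ satisfies $\mathrm{Ric}_g=2m\,g=(n-1)g$; in particular $\mathrm{Ric}_g\ge(n-1)g$, and since $S$ is compact it is a complete Riemannian manifold of dimension $n$. Thus the hypotheses of Bishop's theorem, in the form stated above, are met.

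Next I would apply Bishop's theorem with dimension $n=2m+1$: a complete $n$-dimensional Riemannian manifold whose Ricci curvature satisfies $\mathrm{Ric}\ge(n-1)g$ has volume at most $\gamma_n$, the volume of the round unit $n$-sphere. This yields $\Vol(S,g)\le\gamma_{2m+1}$ immediately.

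Finally, to see that this inequality is a genuine obstruction rather than a tautology about the given metric, I would invoke Corollary $5.26$ (equivalently Theorem $5.42$): the number $\Vol(S,g)$ depends only on $(C(S),J,\widetilde{\xi})$, hence only on the transverse holomorphic structure together with the Reeb vector field, and not on the particular Sasaki metric in the allowed deformation class. So whenever one can compute this invariant for a candidate $(C(S),J,\widetilde{\xi})$ — for instance by the combinatorial formula of Example $5.28$ in the toric case — and the value exceeds $\gamma_{2m+1}$, no Sasaki-Einstein metric with that Reeb field can exist. There is essentially no analytic difficulty in the argument itself: the geometric content was already absorbed into the proof of Corollary $5.26$ (that the volume is an invariant) and into Bishop's theorem, so the main obstacle is really upstream of this statement. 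The only points needing care here are bookkeeping ones — that the Einstein constant $2m$ supplied by Proposition $5.4$ equals $n-1$ exactly, so Bishop applies (with equality in the curvature bound), and that the invariant appearing in Corollary $5.26$ is literally the Riemannian volume of $(S,g)$, so the bound transports to it verbatim.
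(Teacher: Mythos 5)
Your proposal is correct and is essentially the paper's own argument: Proposition $5.4$ gives $\mathrm{Ric}_g = 2m\,g = (n-1)g$ for $n = 2m+1$, so the classical Bishop comparison theorem recalled just before the statement yields $\Vol(S,g)\le\gamma_{2m+1}$, and Corollary $5.26$ (or Theorem $5.42$) turns this into an obstruction depending only on $(C(S),J,\widetilde{\xi})$. Nothing further is needed.
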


\begin{example}[\cite{gmsy}, \cite{BG}]
We consider the action of $\mathbb{C}^*$ on $\mathbb{C}^{m+2}$
defined as
$$(z_0,\cdots,z_{m+1})\mapsto (q^{w_0}z_0,\cdots,q^{w_{m+1}}z_{m+1}),\ \ 
\boldsymbol{w}=(w_0,\cdots,w_{m+1})\in \mathbb{N}^{m+2},\ q\in \mathbb{C}^*.$$
Suppose that a polynomial $F$ on $\mathbb{C}^{m+2}$
satisfies
$$F(q^{w_0}z_0,\cdots,q^{w_{m+1}}z_{m+1})=q^dF(z_0,\cdots,z_{m+1}),\ \ 
d\in \mathbb{N}$$
and that $X=\{F=0\}\subset \mathbb{C}^{m+2}$ has no singular point
except the origin. 
Moreover we assume that $\lvert \boldsymbol{w}\rvert =\sum w_j>d$.
Note that this last condition corresponds to the Fano property of
the quotient $X/\mathbb{C}^*$.

Let $\zeta$ be the generator of the $S^1\subset \mathbb{C}^*$ action on
$X$. If we normalize it as
$$\Tilde{\xi}=\frac{m+1}{\lvert \boldsymbol{w}\rvert -d}\zeta,$$
then we see that $\Tilde{\xi}\in \mathcal{C}_c^*$, see \cite{gmsy}.
Of course, when $\Tilde{\xi}$ is not the minimizer of $\Tilde{S}$,
there exists no Ricci-flat metric in $KCM_c(\Tilde{\xi})$.
Thus suppose here that $\Tilde{\xi}$ is the minimizer of $\Tilde{S}$.
Then it is easy to see that
\begin{equation}
\lambda_1=\frac{(m+1)\min\{w_j\}}{\lvert \boldsymbol{w}\rvert -d},\ \ 
\Vol=\frac{d\gamma_{2m+1}
(\lvert \boldsymbol{w}\rvert -d)^{m+1}}{(m+1)^{m+1}\prod w_j}.
\end{equation}
If we choose $\boldsymbol{w}$ such that $\lambda_1$ or $\Vol$ in $(25)$
do not fulfil the conditions in Theorems $5.44$ or $5.45$,
then $X$ does not admit Ricci-flat K\"ahler cone metric whose Reeb vector field
is $\Tilde{\xi}$.

For example, let $F$ be a polynomial given by
$F(z_0,\cdots,z_{m+1})=z_0^{a_0}+\cdots+z_{m+1}^{a_{m+1}}$, where
$(a_0,a_1,\dots,a_{m+1})\in (\mathbb{Z}_{\ge 0})^{m+2}$.
Then the three conditions, $\lvert \boldsymbol{w}\rvert >d,
\lambda_1\ge 1$ and $\Vol\le \gamma_{2m+1}$ can be expressed in
$(a_0,a_1,\dots,a_{m+1})$ as follows.
$$\lvert \boldsymbol{w}\rvert >d\iff \frac{1}{a_0}+\cdots+\frac{1}{a_{m+1}}
>1$$
\begin{equation}
\lambda_1\ge 1\iff (m+1)\min\{1/a_j\}\ge \frac{1}{a_0}+
\cdots+\frac{1}{a_{m+1}}-1
\end{equation}

\begin{equation}
\Vol\le \gamma_{2m+1}\iff (\prod a_j)(\frac{1}{a_0}+\cdots+\frac{1}{a_{m+1}}
-1)\le (m+1)^{m+1}
\end{equation}
In case when $m=2$ and $a_0=a_1=a_2=2,\ a_3=k>4$, $(26)$ does not hold.
In this case, $\Tilde{\xi}$ is the minimizer of $\Tilde{S}$,
see \cite{gmsy}. Therefore $\{z_0^2+z_1^2+z_2^2+z_3^k=0\}\subset \mathbb{C}
^4$, $k>4$ admits no Ricci-flat K\"ahler cone metric
\footnote{Actually it is proved that there does not exist
a Ricci-flat K\"ahler cone metric on
$\{z_0^2+z_1^2+z_2^2+z_3^3=0\}$, see
\cite{gmsy}, 
\cite{conti}. }. 
\end{example}

Lastly, we would like to
comment on a relation between the existence of Sasaki-Einstein
metric and GIT-stability. 
It is hard to treat an irregular Sasaki manifold in the methods of
algebraic geometry. 
Therefore we feel that, in the Sasaki case,
there is no direct relation with GIT-stability nor K-stability
when $2$-dimensional torus acts isometrically.
However we can define the Bergman kernel (Szeg\"o kernel)
on the cone of a Sasaki manifold and can analyze them.
In fact we saw that the asymptotical behavior of charges has an important
information on the existence of Sasaki-Einstein metric.
Thus we could imagine that the existence of Sasaki-Einstein (or
constant scalar curvature Sasaki) metric is equivalent to some
asymptotic analytical conditions, not algebraic ones.

\end{document}